\documentclass[12pt,a4paper]{article}

\RequirePackage[OT1]{fontenc}
\RequirePackage{amsthm,amsmath}
\RequirePackage[numbers]{natbib}
\RequirePackage[colorlinks,citecolor=blue,urlcolor=blue]{hyperref}
\usepackage{graphicx}

\usepackage[utf8]{inputenc}
\usepackage[english]{babel}
\usepackage{hyperref}
\usepackage{bookmark}
\usepackage{amsfonts}

\usepackage{amssymb}
\usepackage{enumitem}
\usepackage{color}
\usepackage{verbatim}
\usepackage{float}
\usepackage{bbm}
\usepackage[toc,page]{appendix}
%\renewcommand{\baselinestretch}{1.5}

% settings

%\startlocaldefs
\numberwithin{equation}{section}
\theoremstyle{plain}
\newtheorem{definition}{Definition}[section]

\newtheorem{prop}[definition]{Proposition}

\newtheorem{lemma}[definition]{Lemma}
\newtheorem{theo}[definition]{Theorem}
\newtheorem{corollar}[definition]{Corollary}

%\endlocaldefs

\title{Optimal Convergence Rates of Deep Neural Networks in a Classification Setting}
\author{Joseph T. Meyer\\{\url{joseph-theo.meyer@uni-heidelberg.de}}\\
Institute for Applied Mathematics, Heidelberg University,\\ Im Neuenheimer Feld 205, 69120 Heidelberg, Germany}

\begin{document}

%\begin{frontmatter}
%\title{Optimal Convergence Rates of Deep Neural Networks in a Classification Setting}
%\runtitle{Optimal Convergence of Deep NN in Classification}

\maketitle

\begin{abstract}
	 We establish optimal convergence rates up to a $\mathrm{log}$-factor for a class of deep neural networks in a classification setting under a restraint sometimes referred to as the Tsybakov noise condition. We construct classifiers in a general setting where the boundary of the bayes-rule can be approximated well by neural networks. Corresponding rates of convergence are proven with respect to the misclassification error. It is then shown that these rates are optimal in the minimax sense if the boundary satisfies a smoothness condition. Non-optimal convergence rates already exist for this setting. Our main contribution lies in improving existing rates and showing optimality, which was an open problem. Furthermore, we show almost optimal rates under some additional restraints which circumvent the curse of dimensionality. For our analysis we require a condition which gives new insight on the restraint used. In a sense it acts as a requirement for the "correct noise exponent" for a class of functions.   
\end{abstract}

\begin{tabbing}
\emph{Keywords:} Tsybakov noise condition, Classification using deep neural\\ networks\\
\emph{MSC2020 subject classifications:} 62C20, 62G05.
\end{tabbing}
%\end{frontmatter}

\section{Introduction}
    
    We consider i.i.d. data $(Y_i,X_i)_{i=1}^n$ with $Y_i\in\{0,1\}$ and $X_i\in\mathbb{R}^d$. Our goal is to provide an estimator of the form $\hat{Y}=\mathbbm{1}(X\in \hat{G})$ where $\hat{G}$ is constructed with a neural network which approximates $Y$ well with respect to the misclassification error. We show optimal convergence rates under the following two conditions. First, the underlying distribution $\mathbb{Q}$ satisfies a noise condition as in \cite{tsybakov2004optimal} described below. Second, the boundary of the set
    \[
        G_{\mathbb{Q}}^*:=\Big\{x\ \Big|\ f_{\mathbb{Q}}(x)\geq\frac{1}{2}\Big\}
    \]
    with $f_\mathbb{Q}(x):=\mathbb{Q}(Y=1|X=x)$ satisfies certain regularity conditions.\newline
    
    \noindent Neural Networks have shown outstanding results in many classification tasks such as image recognition \cite{he2016deep}, language recognition \cite{collobert2008unified}, cancer recognition \cite{khan2001classification}, and other disease detection \cite{leibig2017leveraging}. Our work follows current approaches in the statistical literature to explain the success of neural networks, e.g. the impactful contributions  \cite{kohler2022estimation, schmidt2020nonparametric}. The objective is to fill a gap in the literature by proving optimal convergence rates in a specific setting which was also considered in \cite{kim2021fast}. We focus on deep feedforward neural networks with ReLU-activation functions. Deep networks have been considered in many theoretical articles \cite{kohler2022estimation, kohler2020statistical, petersen2018optimal, petersen2021optimal} and have proven useful in many applications \cite{schmidhuber2015deep, lecun2015deep}. Intuitively, we wish to approximate the set $G_\mathbb{Q}^*$ directly instead of approximating the regression function $f_\mathbb{Q}$. The classification setting we consider is similar to the setting given in \cite{mammen1999smooth, tsybakov2004optimal}. In particular, we assume that $\mathbb{Q}$ satisfies a noise condition which can be described as follows. For $\mathbb{Q}$-measurable sets $G_1,G_2$ define
    \begin{align*}
        d_{f_\mathbb{Q}}(G_1,G_2) & := \int_{G_1\Delta G_2}|2f_\mathbb{Q}(x)-1|\ \mathbb{Q}_X(\mathrm{d}x),\\
        d_\Delta(G_1,G_2) & :=\mathbb{Q}_X(G_1\Delta G_2).
    \end{align*}
    The condition then states that there exists a constant $\kappa\geq 1$ such that
    \begin{align}
        d_{f_\mathbb{Q}}(G,G^*_\mathbb{Q})\geq c_1d^\kappa_\Delta(G,G^*_\mathbb{Q}) \label{Characteristic Condition}
    \end{align}
    for some constant $c_1>0$ and all $G$. This requirement is sometimes referred to as the Tsybakov noice condition. It can be interpreted as a restraint on the probability distribution regarding regions close to the boundary where $f_\mathbb{Q}(x)=\frac{1}{2}$. Roughly speaking, it forces the mass to decay at a certain rate when one approaches this boundary. Using this, one can achieve rates approaching $n^{-1}$ for small $\kappa$, i.e. if there is not much mass in the region around $f_\mathbb{Q}(x)=\frac{1}{2}$. The condition has been used in many statistical articles considering classification such as \cite{audibert2007fast} and \cite{wu2005svm} who analyse support vector machines. Similarly to \cite{mammen1999smooth}, we show optimal convergence rates in the case where the boundary of $G_\mathbb{Q}^*$ satisfies certain regularity conditions, i.e. is similar to an element of a  Dudley class \cite{dudley1974metric}. More precisely, we consider sets which are slightly more general then the sets given in \cite{petersen2018optimal}. While many other approximation results using neural networks exist, see \cite{cybenko1989approximation} using sigmoid activation functions or \cite{yarotsky2017error} using piecewise linear functions among others, the methods used in \cite{petersen2018optimal} inspired us to obtain the results for our setting. The sets they consider have been used in many articles such as \cite{kim2021fast, petersen2021optimal}. As an estimator, we use a risk minimizer of the empirical version of the misclassification error. Precisely calculating this estimator involves finding a global minimum of a highly non-convex loss with respect to the parameters of a neural network. Typically, such calculations are not feasible in practice. Thus, the results we provide are theoretical in nature and do not have direct useful applications, as is typical for results of this kind \cite{kohler2020statistical, schmidt2020nonparametric}. From our point of view, the main value of current contributions is to show results such as consistency in situations which are typical for statisticians using relatively simple classes of neural networks. In time, the techniques developed may be used to show claims in cases which are closer to those encountered in reality and using classes of neural networks which are closer to those used in practise. 
    
    \noindent A lot of work has been done regarding consistency of feedforward deep neural networks. \cite{schmidt2020nonparametric} prove optimal convergence rates with respect to the uniform norm in a regression setting. Among others, similar results were given by \cite{imaizumi2019deep} for non-continuous regression functions with respect to the $L_2$-norm, \cite{kohler2021rate} who did not use a sparsity constraint, and \cite{barron1994approximation}. Regarding results for classification, \cite{petersen2021optimal} show convergence rates considering the misclassification error in a noiseless setting. Consistency results which include condition \eqref{Characteristic Condition} in the assumptions are given by \cite{bos2022convergence, kohler2020statistical, hu2021understanding}. In contrast to our approach, the previously mentioned articles attempt to estimate the regression function $f_\mathbb{Q}$ instead of directly estimating the set $G^*_\mathbb{Q}$. Additionally, while some obtain optimal convergence rates, the settings do not correspond to the setting given in \cite{tsybakov2004optimal}. In particular, the (optimal) convergence rates differ from ours in these papers. A very interesting contribution was made by \cite{kim2021fast} who consider an almost identical situation to ours, while their estimators differ. However, the rates they obtain are not optimal in the minimax sense.
    
\subsection{Contribution}

    Our contribution includes the following.
    \begin{itemize}
        \item First and foremost, Theorem \ref{Theorem Lower Bound} together with Corollary \ref{Corollar main} prove optimal convergence rates in the minimax sense for the setting described above. To the best of our knowledge, we are the first to obtain optimal convergence rates using neural networks corresponding to the setting given in \cite{tsybakov2004optimal} and thus close this gap in the literature.
        \item Theorem \ref{Theorem Main} establishes convergence rates in a general setting, where the boundary of the set $G_\mathbb{Q}^*$ can be well approximated by neural networks. This enables us to prove rates in a variety of settings. We use this theorem to prove optimal convergence rates under an additional constraint, which circumvents the curse of dimensionality, in the sense that the rates do not decrease exponentially in the dimension $d$.
        \item In order to prove the results stated here, we require a condition which together with condition \eqref{Characteristic Condition} forces $\kappa$ to be the "correct parameter" for the distribution $\mathbb{Q}$. We believe that this condition may bring new insights to condition \eqref{Characteristic Condition} and may be helpful in other situations where \eqref{Characteristic Condition} is required.
    \end{itemize}
\subsection{Outline}

    After introducing some notation, we rigorously introduce the problem at hand in Section \ref{General Convergence Results}. Here, we also provide some convergence results considering empirical risk minimizers with respect to arbitrary sets. These results are then used to prove our main consistency theorems regarding neural networks in Section \ref{Convergence Rates for Neural Networks}. Section  \ref{Lower Bound} includes the corresponding lower bounds followed by some concluding remarks in Section \ref{Concluding Remarks}.\newline 

\subsection{Notation}

    We introduce some general notation which is used throughout this article.\newline 
    
    \noindent For $x\in\mathbb{R}$, let $\lfloor x\rfloor:=\max\{k\in\mathbb{Z}\ |\ k\leq x\}$ and $\lceil x\rceil:=\min\{k\in\mathbb{Z}\ |\ k\geq x\}$. Let $\lambda$ be the Lebesgue measure. For a function $g:\Omega\subseteq\mathbb{R}^s\rightarrow\mathbb{R}$ and $k\in\mathbb{N}$ denote by
    \begin{align*}
        & \|g\|_\infty:=\sup_{x\in\Omega}|g(x)|,\ \ \ \|g\|_{L^k}:=\left(\int|g|^k\ \lambda(\mathrm{d}x)\right)^{\frac{1}{k}}
    \end{align*}
    the uniform-norm and the $L^k$-norm, respectively. Note that we omit the dependence on $\Omega$ in the notation. For $x\in\mathbb{R}^s$, let $\|x\|_2$ and $\|x\|_\infty$ be the euclidean-norm and the uniform-norm, respectively. For $j\in\{1,\dots,s\}$ let $$x_{-j}:=(x_1,\dots,x_{j-1},x_{j+1},\dots,x_s).$$ Additionally, let 
    \begin{align*}
        \mathcal{B}_r(x) & :=\{y\in\mathbb{R}^s\ |\ \|x-y\|_\infty\leq r\},\\
        \mathcal{B}^\circ_r(x) & :=\{y\in\mathbb{R}^s\ |\ \|x-y\|_\infty< r\}.
    \end{align*}
    For $a\in\mathbb{N}^s$ let $|a|:=\sum_{i=1}^sa_i$.\newline
    
    \noindent Now, let $\beta\in(0,\infty)$. Define $m:=\max\{k\in\mathbb{N}\ |\ k<\beta\}$ and $\omega:=\beta-m>0$. For $f\in\mathcal{C}\big([0,1]^s,\mathbb{R}\big)$ let
    \[
        \|f\|_{\mathcal{C}^\beta}:=\sum_{|\alpha|\leq m}\ \big\|\partial^\alpha f\|_\infty + \sum_{|\alpha|= m}\sup_{x\neq y} \frac{|\partial^\alpha f(x)-\partial^\alpha f(y)|}{|x-y|_\infty^\omega}
    \]
    be the Hoelder-norm. For $B>0$, define the class of Hoelder-continuous functions by
    \[
        \mathcal{F}_{\beta,B,s}:=\Big\{f\in\mathcal{C}\big([0,1]^s,\mathbb{R}\big) \ \Big|\ \|f\|_{\mathcal{C}^\beta}\leq B\Big\}.
    \]
    Let $G_1,G_2\subseteq\Omega$ be two subsets. We write 
    \[
        G_1\Delta G_2:=(G_1\backslash G_2)\cup (G_2\backslash G_1)
    \]
    for their symmetric difference and 
    \[
        \mathbbm{1}(x\in G_1):=\begin{cases}
        1,\ \text{for}\ x\in G_1,\\
        0,\ \text{otherwise}
        \end{cases}
    \]
    for the indicator function corresponding to $G_1$.

\section{General Convergence Results}\label{General Convergence Results}

    In this section, we state our results in a relatively general setting. The results on neural networks in the next section only consider the case where $\mathbb{Q}_X$ has a bounded density with respect to the Lebesgue measure.
    Our setup is similar to the binary classification setup of \cite{tsybakov2004optimal}. 
    
\subsection{Classification Setup}

    Let $(X_i,Y_i)_{i=1}^n$ be $i.i.d.$ observations distributed according to some probability measure $\mathbb{Q}$, where $X_i\in\mathbb{R}^d$ and $Y_i\in\{0,1\}$. Denote by $\mathbb{Q}_X$ the marginal probability distribution with respect to $X\in\mathbb{R}^d$. The goal is to predict $Y\in\{0,1\}$ when observing $X\in\mathbb{R}^d$, where $(X,Y)$ is distributed according to $\mathbb{Q}$ independently of $(X_i,Y_i)_{i=1}^n$ using classifiers of the form
    \[
        \hat{Y}:=\mathbbm{1}(X\in \hat{G})
    \]
    for some $\mathbb{Q}$-measurable set $\hat{G}\subseteq\mathbb{R}^d$. Note that a classifier is uniquely determined by $\hat{G}$. Performance is measured by the misclassification error
    \[
        R(\hat{G}):=\mathbb{P}\big(Y\neq\hat{Y}\big)=\mathbb{E}\Big[\big(Y-1(X\in \hat{G})\big)^2\Big].
    \]
    For $f_\mathbb{Q}(x):=\mathbb{E}[Y|X=x]=\mathbb{Q}(Y=1|X=x)$ the set
    \[
        G_{\mathbb{Q}}^*:=\Big\{x\ \Big|\ f_{\mathbb{Q}}(x)\geq\frac{1}{2}\Big\}
    \]
    is a so called bayes rule and thus minimizes the misclassification error. Classification can equivalently be seen as estimation of $G_\mathbb{Q}^*$ by the set $\hat{G}$, which is therefore equally referred to as classifier. For a $\mathbb{Q}$-measureable set $G\subseteq\mathbb{R}^d$ let
    \[
        R_n(G)=\frac{1}{n}\sum_{i=1}^n1\big(Y_i\neq 1(X_i\in G)\big)=\frac{1}{n}\sum_{i=1}^n\big(Y_i-1(X_i\in G)\big)^2
    \]
    be the empirical version of the misclassification error $R(G)$. We consider empirical risk minimization classifiers defined by
    \[
        \hat{G}_n:=\underset{G\in\mathcal{N}_n}{\mathrm{arg\ min}}\ R_n(G)
    \]
    where $\mathcal{N}_n$ is some finite collection of $\mathbb{Q}$-measurable sets for all $n\in\mathbb{N}$.
    
\subsection{Consistency Results}

    Proposition \ref{Theorem Mammen} establishes convergences rates for estimating $G^*_\mathbb{Q}$ using $\hat{G}_n$ under certain conditions on $\mathcal{N}_n$ and $\mathbb{Q}$. For the loss function, we consider a slight generalization of the misclassification error
    \[
        \mathbb{E}\big[\big(R(\hat{G}_n)-R(G_\mathbb{Q}^*)\big)^p\big]=\mathbb{E}\big[d^p_{f_\mathbb{Q}}(G_n,G_\mathbb{Q}^*)\big]
    \]
    for $p\geq 1$. The proposition is somewhat similar to Theorem 2 from  \cite{mammen1999smooth}. In contrast to our approach, they consider the discrimination of two probability distributions with underlying distribution functions and do not allow for non-optimal convergence rates. The proposition is an important component for the proof of our main theorem given in Section \ref{Convergence Rates for Neural Networks}. The proofs of this section can be found in Appendix \ref{Appendix - General Convergence Results}.

    \begin{prop}\label{Theorem Mammen}
        Let $\tau_n>0$ be a monotonically increasing sequence. Let $\mathfrak{Q}$ be a class of potential joint distributions $\mathbb{Q}$ of $(X,Y)$ and $\mathcal{N}_n$ be a collection of subsets of $\mathbb{R}^d$ for all $n\in\mathbb{N}$ such that the following conditions hold.
        \begin{enumerate}
            \item[(i)] For all $\mathbb{Q}\in\mathfrak{Q}$ all sets in $\bigcup_{n\in\mathbb{N}}\mathcal{N}_n$ and $G^*_\mathbb{Q}$ are $\mathbb{Q}$-measurable.
            \item[(ii)] There exists a constant $\kappa\geq 1$ such that
            \[
                d_{f_\mathbb{Q}}(G,G^*_\mathbb{Q})\geq c_1d^\kappa_\Delta(G,G^*_\mathbb{Q})
            \]
            for some constant $c_1>0$, all $G\in\bigcup_{n\in\mathbb{N}}\mathcal{N}_n$ and all $\mathbb{Q}\in\mathfrak{Q}$.
        \end{enumerate}
        Additionally, we assume that there is a constant $N_0\in\mathbb{N}$ such that for all $n\geq N_0$ the following holds.
        \begin{enumerate}
            \item[(iii)] There is a constant $c_2>0$ such that for all $\mathbb{Q}\in\mathfrak{Q}$ there is a $G\in\mathcal{N}_n$ with
            \[
                d_{f_\mathbb{Q}}(G,G^*_\mathbb{Q})\leq c_2\tau_n^{-\kappa}.
            \]
            \item[(iv)] There exist constants $c_3,\rho>0$ such that
            \[
                \log\big(|\mathcal{N}_n|\big)\leq c_3n^{\frac{\rho}{\rho+2\kappa-1}}.
            \]
        \end{enumerate}
        Then for all $p\geq 1$ we have
        \begin{align*}
            \underset{n\rightarrow\infty}{\lim\ \sup}\ & \underset{\mathbb{Q}\in\mathfrak{Q}}{\sup}\ \tilde{\tau}_n^{\kappa p}\ \mathbb{E}\big[d_{f_\mathbb{Q}}^p(\hat{G}_n,G^*_\mathbb{Q})\big]<\infty,\\
            \underset{n\rightarrow\infty}{\lim\ \sup}\ & \underset{\mathbb{Q}\in\mathfrak{Q}}{\sup}\ \tilde{\tau}_n^p\ \mathbb{E}\big[d_\Delta^p(\hat{G}_n,G^*_\mathbb{Q})\big]<\infty,
        \end{align*}
        where 
        \[ 
            \tilde{\tau}_n:=\min\{\tau_n,n^{\frac{1}{\rho+2\kappa-1}}\}
        \]
        for all $n\in\mathbb{N}$.
    \end{prop}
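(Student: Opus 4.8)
The plan is to run the classical Mammen--Tsybakov empirical-risk-minimization argument, with the shelling (peeling) device calibrated to hypotheses (i)--(iv); the existence of $\hat G_n$ is not an issue since $\mathcal{N}_n$ is finite. For a $\mathbb{Q}$-measurable set $G$ put
\[
\xi_i(G):=\bigl(Y_i-\mathbbm{1}(X_i\in G)\bigr)^2-\bigl(Y_i-\mathbbm{1}(X_i\in G_\mathbb{Q}^*)\bigr)^2 .
\]
A short case analysis shows that $\xi_i(G)$ vanishes off $\{X_i\in G\Delta G_\mathbb{Q}^*\}$, takes values in $[-1,1]$, and, since $(2Y_i-1)^2=1$, satisfies $\mathbb{E}[\xi_i(G)]=d_{f_\mathbb{Q}}(G,G_\mathbb{Q}^*)=R(G)-R(G_\mathbb{Q}^*)$ and the exact variance identity $\mathbb{E}[\xi_i(G)^2]=d_\Delta(G,G_\mathbb{Q}^*)$. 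Write $\nu_n(G):=\mathbb{E}[\xi_1(G)]-\tfrac1n\sum_{i=1}^n\xi_i(G)$ and $\delta_n:=d_{f_\mathbb{Q}}(\hat G_n,G_\mathbb{Q}^*)$, and let $\bar G_n\in\mathcal{N}_n$ realize (iii), so $\bar\delta_n:=d_{f_\mathbb{Q}}(\bar G_n,G_\mathbb{Q}^*)\le c_2\tau_n^{-\kappa}$. Subtracting $R_n(G_\mathbb{Q}^*)$ from both sides of $R_n(\hat G_n)\le R_n(\bar G_n)$ and rearranging yields the basic inequality
\[
\delta_n\ \le\ \bar\delta_n+|\nu_n(\bar G_n)|+\nu_n(\hat G_n).
\]
Fix the target radius $a_n:=C\,\tilde\tau_n^{-\kappa}$ with $C$ a large constant to be chosen; since $\tilde\tau_n\le\tau_n$ we get $\bar\delta_n\le a_n/4$ once $C\ge 4c_2$, and since $\tilde\tau_n\le n^{1/(\rho+2\kappa-1)}$ we get $a_n\ge C\,n^{-\kappa/(\rho+2\kappa-1)}$.

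For the fixed set $\bar G_n$, (ii) gives $\mathrm{Var}(\xi_1(\bar G_n))=d_\Delta(\bar G_n,G_\mathbb{Q}^*)\le(c_2/c_1)^{1/\kappa}\tau_n^{-1}$, so Bernstein's inequality, together with $a_n\ge C\,n^{-\kappa/(\rho+2\kappa-1)}$ and a short computation using $\kappa\ge1$, bounds $\mathbb{P}(|\nu_n(\bar G_n)|>a_n/4)$, and hence the probability of $B:=\{\bar\delta_n+|\nu_n(\bar G_n)|>a_n/2\}$, by $2\exp(-c\,n^{\rho/(\rho+2\kappa-1)})$. For $\nu_n(\hat G_n)$ we peel: partition $\mathcal{N}_n$ into $\mathcal{N}_n^{(0)}:=\{G:d_{f_\mathbb{Q}}(G,G_\mathbb{Q}^*)\le a_n\}$ and $\mathcal{N}_n^{(j)}:=\{G:2^{j-1}a_n<d_{f_\mathbb{Q}}(G,G_\mathbb{Q}^*)\le 2^j a_n\}$ for $j\ge1$. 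On $B^c$ the basic inequality gives $\delta_n\le a_n/2+\nu_n(\hat G_n)$, so on $B^c\cap\{\hat G_n\in\mathcal{N}_n^{(j)}\}$ ($j\ge1$) one has $\nu_n(\hat G_n)>2^{j-2}a_n$; using $\mathbb{E}[\xi_1(G)^2]=d_\Delta(G,G_\mathbb{Q}^*)\le(2^j a_n/c_1)^{1/\kappa}$ on $\mathcal{N}_n^{(j)}$ (by (ii)), Bernstein together with a union bound over the at most $|\mathcal{N}_n|$ sets in the shell gives
\[
\mathbb{P}\bigl(B^c\cap\{\hat G_n\in\mathcal{N}_n^{(j)}\}\bigr)\ \le\ |\mathcal{N}_n|\,\exp\!\left(-c'\,n\,(2^j a_n)^{2-1/\kappa}\right).
\]

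The key observation is that $\kappa(2-1/\kappa)=2\kappa-1$, so $a_n\ge C\,n^{-\kappa/(\rho+2\kappa-1)}$ yields $n(2^j a_n)^{2-1/\kappa}\ge C^{2-1/\kappa}2^{j(2-1/\kappa)}n^{\rho/(\rho+2\kappa-1)}$; this is exactly where the exponent in (iv) matches, so that for $C$ large (depending only on $c_1,c_2,c_3,\rho,\kappa$) this dominates $\log|\mathcal{N}_n|\le c_3n^{\rho/(\rho+2\kappa-1)}$ with room to spare, and the bound becomes $\exp(-A2^j)$ for some $A>0$ (using $2-1/\kappa\ge1$). Splitting, using $\delta_n\le1$ on $B$ (as $d_{f_\mathbb{Q}}\le d_\Delta\le1$) and $\delta_n\le 2^j a_n$ on the $j$-th shell,
\[
\mathbb{E}[\delta_n^p]\ \le\ \mathbb{P}(B)+a_n^p+\sum_{j\ge1}(2^j a_n)^p\,\mathbb{P}\bigl(B^c\cap\{\hat G_n\in\mathcal{N}_n^{(j)}\}\bigr)\ \le\ 2e^{-c\,n^{\rho/(\rho+2\kappa-1)}}+\Bigl(1+\sum_{j\ge1}2^{jp}e^{-A2^j}\Bigr)a_n^p .
\]
Multiplying by $\tilde\tau_n^{\kappa p}$ and using $\tilde\tau_n^{\kappa p}a_n^p=C^p$, the first term is a power of $n$ times something exponentially small (hence vanishes) and the remainder is a fixed constant, which gives the first assertion; the second follows since (ii) gives $d_\Delta(\hat G_n,G_\mathbb{Q}^*)\le(\delta_n/c_1)^{1/\kappa}$, so by Jensen's inequality (valid as $\kappa\ge1$) $\tilde\tau_n^p\,\mathbb{E}[d_\Delta^p(\hat G_n,G_\mathbb{Q}^*)]\le c_1^{-p/\kappa}\bigl(\tilde\tau_n^{\kappa p}\,\mathbb{E}[\delta_n^p]\bigr)^{1/\kappa}$.

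The main obstacle is precisely this calibration: the base radius $a_n=C\tilde\tau_n^{-\kappa}$ and the absolute constant $C$ must be chosen so that $a_n$ simultaneously absorbs the approximation error and the fluctuation of $\bar G_n$ while the shell-wise Bernstein exponent beats $\log|\mathcal{N}_n|$ uniformly in $j$ --- and hypothesis (iv) with the exponent $\rho/(\rho+2\kappa-1)$ is exactly what makes the latter feasible, with $\tilde\tau_n=\min\{\tau_n,n^{1/(\rho+2\kappa-1)}\}$ being the level at which the approximation and complexity contributions balance. Two minor points then require attention: the degenerate regime in which $\tilde\tau_n$ fails to exceed $1$ (only the large-$n$ behaviour enters the $\limsup$, and the Bernstein estimates are re-balanced there using $d_\Delta\le1$), and the uniformity of every bound over $\mathbb{Q}\in\mathfrak{Q}$, which is automatic because all constants above depend only on $c_1,c_2,c_3,\rho,\kappa$.
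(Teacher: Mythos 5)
Your proposal is correct, and its engine is the same as the paper's: the ERM basic inequality for $\xi_i(G)=h_G(X_i,Y_i)$, the variance identity $\mathbb{E}[\xi_1(G)^2]=d_\Delta(G,G_\mathbb{Q}^*)\le c_1^{-1/\kappa}d_{f_\mathbb{Q}}(G,G_\mathbb{Q}^*)^{1/\kappa}$ from (ii), Bernstein plus a union bound played off against (iv) via the identity $\kappa(2-1/\kappa)=2\kappa-1$, and the reduction of the $d_\Delta$-claim to the $d_{f_\mathbb{Q}}$-claim through (ii). The one structural difference is how you convert the tail bound into a moment bound: you peel $\mathcal{N}_n$ into dyadic shells $\{2^{j-1}a_n<d_{f_\mathbb{Q}}(G,G_\mathbb{Q}^*)\le 2^j a_n\}$ and sum $(2^ja_n)^p e^{-A2^j}$ over $j$, whereas the paper fixes a single (large, constant) threshold $t\tilde\tau_n^{-\kappa}$, shows $\mathbb{P}\big(d_{f_\mathbb{Q}}(\hat G_n,G_\mathbb{Q}^*)>t\tilde\tau_n^{-\kappa}\big)\lesssim e^{-c\tilde\tau_n^\rho}$ by a union bound over all of $\Xi_t$ at once (using $d_{f_\mathbb{Q}}\ge t\tilde\tau_n^{-\kappa}$ there), and uses the crude bound $d_{f_\mathbb{Q}}\le 1$ on the exceptional event. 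Your peeling is slightly more refined and tracks the contribution of each radius, which would matter if one wanted sharp constants or a genuine oracle inequality; the paper's single-threshold version is shorter because the exceptional probability is already exponentially small and easily absorbs the factor $\tilde\tau_n^{\kappa p}$. Your explicit handling of the degenerate regime (bounded $\tilde\tau_n$) and the Jensen step for the second assertion are both fine and match what the paper does implicitly.
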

    
    Condition (i) is needed for all terms to be well defined. Condition (iii) states that the set in question must be well approximated by elements of $\mathcal{N}_n$. A sufficient assumption is that $\mathcal{N}_n$ is an $\epsilon$-net of $\big\{G_\mathbb{Q}^*\ \big|\ \mathbb{Q}\in\mathfrak{Q}\big\}$,
    where $\epsilon:=c_1\tau_n^{-\kappa}$. Together with (iv), this indirectly bounds the complexity of $\mathfrak{Q}$. If the class of sets $\{G^*_\mathbb{Q}\ |\ \mathbb{Q}\in\mathfrak{Q}\}$ is to large, one will not be able to find sets $\mathcal{N}_n$ that satisfy (iii) and (iv) at the same time. It is clear that the best rates are achieved with $\tau_n = n^{\frac{1}{\rho+2\kappa-1}}$. We do not use the same sequences in conditions (iii) and (iv) since one can prove non-optimal convergence rates using this version of the proposition.\newline 
    The second condition is the noise condition described in the introduction. Note that following \cite{tsybakov2004optimal}, for $\kappa>1$ condition (ii) holds if
    \begin{align*}
        \mathbb{P}\Big(\big|2f_\mathbb{Q}(X)-1\big|\leq t\Big)\leq ct^{\frac{1}{\kappa-1}}\label{equation tsyba condition}
    \end{align*}
    for all $t>0$ and some $c>0$. Roughly speaking, this forces the mass to decay at a certain rate when one approaches the boundary of $G_\mathbb{Q}^*$. Note that we use (ii) instead of this assumption since it is slightly more general and includes the case $\kappa=1$. Additionally, it appears more naturally in the proofs. Observing the alternative assumption, $\kappa=1$ corresponds to the case where there is no mass close to the boundary of $G_\mathbb{Q}^*$, meaning that $f_\mathbb{Q}$ does not take on values close to $\frac{1}{2}$. Using Proposition \ref{Theorem Mammen} one can achieve rates approaching $n^{-1}$ for small $\kappa,\rho$ i.e. if there is not much mass in the region around $f_\mathbb{Q}(x)=\frac{1}{2}$ and the complexity of $\mathcal{N}_n$, and consequently $\mathfrak{Q}$, is moderate. In contrast, the following proposition provides convergence rates if condition (ii) is not satisfied.
    
    \begin{prop}\label{Theorem Mammen 2}
        Let $\tau_n>0$ be a monotonically increasing sequence. Let $\mathfrak{Q}$ be a class of potential joint distributions $\mathbb{Q}$ of $(X,Y)$ and $\mathcal{N}_n$ be a collection of subsets of $\mathbb{R}^d$ for all $n\in\mathbb{N}$ such that the following conditions hold.
        \begin{enumerate}
            \item[(i)] For all $\mathbb{Q}\in\mathfrak{Q}$ all sets in $\bigcup_{n\in\mathbb{N}}\mathcal{N}_n$ and $G^*_\mathbb{Q}$ are $\mathbb{Q}$-measurable.
        \end{enumerate}
        Additionally, we assume that there is a constant $N_0\in\mathbb{N}$ such that for all $n\geq N_0$ the following holds.
        \begin{enumerate}
            \item[(ii)] There is a constant $c_2>0$ such that for all $n\in\mathbb{N}$ and $\mathbb{Q}\in\mathfrak{Q}$ there is a $G\in\mathcal{N}_n$ with
            \[
                d_{f_\mathbb{Q}}(G,G^*_\mathbb{Q})\leq c_2\tau_n^{-1}.
            \]
            \item[(iii)] There exist $c_3,\rho>0$ such that
            \[
                \log\big(|\mathcal{N}_n|\big)\leq c_3n^{\frac{\rho}{\rho+2}}.
            \]
        \end{enumerate}
        Then for all $p\geq 1$ we have
        \begin{align*}
            \underset{n\rightarrow\infty}{\lim\ \sup}\ & \underset{\mathbb{Q}\in\mathfrak{Q}}{\sup}\ \tilde{\tau}_n^{p}\ \mathbb{E}\big[d_{f_\mathbb{Q}}^p(\hat{G}_n,G^*_\mathbb{Q})\big]<\infty,
        \end{align*}
        where 
        \[
            \tilde{\tau}_n:=\min\{\tau_n,n^{\frac{\rho}{\rho+2}}\}
        \]
        for all $n\in\mathbb{N}$.
    \end{prop}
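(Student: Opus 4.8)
The plan is to follow the same blueprint as the proof of Proposition~\ref{Theorem Mammen}, but in a simplified, ``un-localised'' form: since the Tsybakov-type requirement (condition (ii) of that proposition) is no longer available, every place where it was used to extract a variance improvement is replaced by the trivial bound $d_\Delta(G,G^*_\mathbb{Q})\le 1$, and the peeling over shells $\{d_\Delta(G,G^*_\mathbb{Q})\approx 2^{-j}\}$ collapses to a single concentration step. Consequently one can only control $d_{f_\mathbb{Q}}$ (not $d_\Delta$), which is exactly why the conclusion has a single line.

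Fix $n\ge N_0$ and $\mathbb{Q}\in\mathfrak{Q}$. By condition (ii) choose $G^\sharp_n\in\mathcal{N}_n$ with $d_{f_\mathbb{Q}}(G^\sharp_n,G^*_\mathbb{Q})\le c_2\tau_n^{-1}$, and set $\xi_i(G):=\big(Y_i-\mathbbm{1}(X_i\in G)\big)^2-\big(Y_i-\mathbbm{1}(X_i\in G^*_\mathbb{Q})\big)^2$. Then $\mathbb{E}[\xi_1(G)]=R(G)-R(G^*_\mathbb{Q})=d_{f_\mathbb{Q}}(G,G^*_\mathbb{Q})\ge 0$, $|\xi_i(G)|\le 1$, and $\mathrm{Var}\big(\xi_1(G)\big)\le\mathbb{E}\big[\xi_1(G)^2\big]=\mathbb{Q}_X(G\Delta G^*_\mathbb{Q})=d_\Delta(G,G^*_\mathbb{Q})\le 1$. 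Since $\hat G_n$ minimises $R_n$ over $\mathcal{N}_n$ and $\hat G_n,G^\sharp_n\in\mathcal{N}_n$, adding and subtracting empirical means yields
\[
 d_{f_\mathbb{Q}}(\hat G_n,G^*_\mathbb{Q})\ \le\ d_{f_\mathbb{Q}}(G^\sharp_n,G^*_\mathbb{Q})+2\,\Xi_n\ \le\ c_2\tau_n^{-1}+2\,\Xi_n ,
\]
where $\Xi_n:=\sup_{G\in\mathcal{N}_n}\big|\tfrac1n\sum_{i=1}^n\xi_i(G)-\mathbb{E}[\xi_1(G)]\big|$. Thus the whole problem reduces to bounding $\Xi_n$.

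For $\Xi_n$ I would use Bernstein's inequality for each fixed $G$ (with the bounds $|\xi_i(G)|\le 1$ and $\mathrm{Var}(\xi_1(G))\le 1$ just recorded) together with a union bound over the finite set $\mathcal{N}_n$, giving a tail bound of the form $\mathbb{P}(\Xi_n>t)\le 2\exp\!\big(\log|\mathcal{N}_n|-c\,n t^2/(1+t)\big)$ for all $t>0$, with $c$ absolute. Inserting condition (iii) shows that $\Xi_n$ is, with probability tending to one at an exponential rate and in $L^p$ for every $p\ge 1$, of order at most $\tilde\tau_n^{-1}$, uniformly over $\mathbb{Q}\in\mathfrak{Q}$ because the constants $c_2,c_3,\rho$ are $\mathbb{Q}$-free. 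Plugging this into the displayed comparison inequality, using the monotonicity of $\tau_n$ and $\tilde\tau_n\le\tau_n$, then raising to the $p$-th power, taking expectations, and finally $\limsup_{n\to\infty}\sup_{\mathbb{Q}\in\mathfrak{Q}}$, gives the claim; the integration of the tail bound against $p\,t^{p-1}\,\mathrm{d}t$ is routine because $d_{f_\mathbb{Q}}(\hat G_n,G^*_\mathbb{Q})\le 1$ is bounded.

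I expect the only genuinely delicate point to be the calibration of the concentration step: one must choose the Bernstein threshold so that the exponential factor is beaten by $\log|\mathcal{N}_n|$ precisely at the scale $\tilde\tau_n^{-1}$ dictated by condition (iii), and then verify that the resulting tail is integrable with constants independent of $\mathbb{Q}$. Everything else is the standard empirical-risk-minimisation bookkeeping already carried out — in a heavier, localised form — for Proposition~\ref{Theorem Mammen}; here it is lighter precisely because the absence of the noise condition forces us to relinquish the variance localisation and to settle for the slower rate encoded in $\tilde\tau_n$.
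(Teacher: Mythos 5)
Your proposal is correct and follows essentially the same route as the paper's proof: Bernstein's inequality with the trivial variance bound $\mathbb{E}[\xi_1(G)^2]\leq d_\Delta(G,G^*_\mathbb{Q})\leq 1$, a union bound over the finite class $\mathcal{N}_n$, calibration of the deviation threshold at scale $\tilde{\tau}_n^{-1}$ against condition (iii), and integration of the bounded loss. The only difference is cosmetic: you package the ERM step as a deterministic comparison $d_{f_\mathbb{Q}}(\hat{G}_n,G^*_\mathbb{Q})\leq c_2\tau_n^{-1}+2\Xi_n$ plus a tail bound on the global supremum, whereas the paper retains the (here vestigial) event decomposition over $\Xi_t$ inherited from Proposition \ref{Theorem Mammen}.
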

    
    Note that the requirement in conditon (ii) of Proposition \ref{Theorem Mammen 2} corresponds to requirement (iii) of \ref{Theorem Mammen} with $\kappa=1$. However, for $p=1$ the best rate achievable is of order $n^{-\frac{1}{p+2}}$, which is always slower $n^{-\frac{1}{2}}$. Proposition \ref{Theorem Mammen 2} provides rates in absence of condition (ii) of Proposition \ref{Theorem Mammen}. We do not claim optimality for these rates. 
    
\section{Convergence Rates for Neural Networks}\label{Convergence Rates for Neural Networks}

    We begin by shortly introducing neural networks. The idea is to use Proposition \ref{Theorem Mammen} to obtain optimal convergence rates up to a log factor. Neural networks are used to define a suitable class of sets $\mathcal{N}_n$ for every $n\in\mathcal{N}$.

\subsection{Definitions regarding Neural Networks}

    \begin{definition}
        Let $L,m_0,\dots,m_{L+1}\in\mathbb{N}$. For $i=1,\dots,L$, let $\sigma_i$ be a function
        \[
            \sigma_i:\mathbb{R}\rightarrow\mathbb{R}.
        \] 
        For $b=(b_1,\dots, b_{m_i})\in\mathbb{R}^{m_i}$, define a shifted $m_i$-dimensional version of $\sigma_i$ by 
        \[
            \sigma_{i,b}:\mathbb{R}^{m_i}\rightarrow\mathbb{R}^{m_i},\ \sigma_{i,b}(y_1,\dots,y_{m_i})=\big(\sigma_i(y_1-b_1),\dots,\sigma_i(y_{m_i}-b_{m_i})\big).
        \]
        A neural network with network architecture 
        \[
            \big(L,(m_0,\dots,m_{L+1}),(\sigma_1,\dots,\sigma_{L})\big)
        \]
        is a sequence 
        \[
            \Phi:=\big(W_1,b_1,\dots,W_{L},b_{L},W_{L+1}\big)
        \]
        where each $W_s\in\mathbb{R}^{m_s\times m_{s-1}}$ is a weight matrix and $b_s\in\mathbb{R}^{m_s}$ is a shift vector. The realization of a neural network $\Phi$ on a set $D\subseteq\mathbb{R}^{m_0}$ is the function
        \[
            R(\Phi):D\rightarrow\mathbb{R}^{m_{L+1}},\ R(\Phi)(x)=W_{L+1}\sigma_{L,b_L}W_L\cdot\cdot\cdot W_2\sigma_{1,b_1}W_1x.
        \]
        We denote by 
        \[
            \mathcal{N}_{L,m,\sigma}:=\big\{R(\Phi)\ |\  \Phi=\big(W_1,b_1,\dots,W_{L+1},b_{L+1}\ \big),\ W_s\in\mathbb{R}^{m_s\times m_{s-1}}, b_s\in\mathbb{R}^{m_s} \big\}
        \] 
        the set of realizations of neural networks with network architecture $(L,m,\sigma)$, where $m:=(m_0,\dots,m_{L+1})\in\mathbb{N}^{L+2}$ and $\sigma:=(\sigma_1,\dots,\sigma_{L})$.
    \end{definition}
    
    Typically, for $i,\dots,L$ the function $\sigma_i$ is called activation function and $\sigma_{i,b}$ is named shifted activation function. The constant $L$ denotes the number of hidden Layers. The values $d:=m_0$ and $m_{L+1}$ are the input and output dimensions, respectively. In this article, we are interested in the case where for $i=1,\dots,L$ the activation function in the $i$-th layer is the rectifier linear unit (ReLU)
    \[
        \sigma_i(x):=\max\{x,0\}.
    \]
    Additionally, if not further specified, we consider a compact domain $D:=[0,1]^d$ and a one dimensional output $m_{L+1}=1$. For the sake of completeness, we note that a network with L=0 layers is of the form $\Phi:=(W)$ for $W\in\mathbb{R}^{m_0\times m_1}$ and has realization $R(\Phi)(x)=Wx$. Note that, in general, the weights of a neural network $\Phi$, i.e. the entries of its shift vectors $(b_1,\dots,b_{L+1})$ and weight matrices  $(W_1,\dots,W_{L+1})$, are not uniquely determined by its realization $R(\Phi)$. In the following, for brevity, we occasionally introduce a network by defining its realization. In such a case, it is clear from the presentation of the realization which precise neural network is considered.\newline
    
    \noindent As a first step, we wish to introduce a suitable finite class of sets parameterized by neural networks and count the number of elements. We define these sets as $R(\Phi)^{-1}(1)$ where $\Phi$ is a realization of a neural network. Equivalently, we could have considered neural networks with a binary step function in the output layer or find a neural network $\tilde{\Phi}$ and define the approximating set $R(\tilde\Phi)^{-1}((0.5,1])$, which is closer to the idea that the realization of the neural network represents some sort of probability. Since this is not the idea of our approximation results, we stick to the version above. In order to obtain a finite class, we need to reduce the number of considered elements of $\mathcal{N}_{L,m,\sigma}$ while maintaining reasonable approximating capabilities. A typical approach in the theoretical literature is to use a sparsity constraint. For $s>1$ we therefore only consider realizations of neural nets which have at most $s$ nonzero weights. If $s$ is the total number of nonzero weights, we say that the network has sparsity $s$. Additionally, we assume all weights to be elements of the set 
    \[
        \mathcal{W}_c:=\big\{k2^{-c}\ \big|\ c\in\mathbb{N},\ k\in\{-2^c, -2^c+1,\dots,2^c-1,2^c\}\big\}.
    \]
    Thus, we only consider weights $|w|\leq 1$. Concluding, we use the following notation to describe the collection of sets we are interested in.
    
    \begin{definition}\label{N_lsc}
        Let $L_0,c\in\mathbb{N}$ and $s_0>1$ be fixed. Denote by $\tilde{\mathcal{N}}_{L_0,s_0,c}$ the set of realizations of neural networks with $d$ dimensional input, one dimensional output, at most $L_0$ layers, ReLU activation functions and sparsity at most $s_0$, where all weights are elements of $\mathcal{W}_c$. The class of corresponding sets given by neural networks is then
        \[
            \mathcal{N}_{L_0,s_0,c}:=\big\{R(\Phi)^{-1}(1)\subseteq [0,1]^d,\ \big|\ \Phi\in \tilde{\mathcal{N}}_{L_0,s_0,c}\big\}.
        \]
    \end{definition}
    
    Note that the requirements from Definition \ref{N_lsc} allow for realizations of neural networks with arbitrary hidden layer dimensions $(m_1,\dots,m_L)\in\mathbb{N}^L$. However, it is easy to see that every element of $\tilde{\mathcal{N}}_{L_0,s_0,c}$ is a realization of a neural net which satisfies the properties described in the Definition and $m_i\leq s_0$ for all $i\in\{1,\dots, L_0\}$. Using this, we receive an upper bound on the number of elements of $\mathcal{N}_{L_0,s_0,c}$ by counting the number of corresponding neural networks. Thus, the following bound is independent of the choice of activation functions $\sigma$.
    
    \begin{lemma} \label{Lemma Anzahl der Elemente}
        For $s_0>1$ and $L_0,c\in\mathbb{N}$ let $\mathcal{N}_{L_0,s_0,c}$ be the class of sets introduced in Definition \ref{N_lsc}. We have an upper bound on the number of elements given by
        \[
            \big|\mathcal{N}_{L_0,s_0,c}\big|\leq \big((ds_0+\min\{s_0,L_0\}(s_0+1)^2)2^{c+2}\big)^{s_0}.
        \]
    \end{lemma}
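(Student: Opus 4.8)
The plan is to bound $|\mathcal{N}_{L_0,s_0,c}|$ by first bringing every realization in $\tilde{\mathcal{N}}_{L_0,s_0,c}$ into a canonical, architecturally controlled form, and then encoding each such network by the list of its nonzero weights together with their positions and counting how many such lists are possible. Since distinct sets $R(\Phi)^{-1}(1)$ arise from distinct realizations, and, once the encoding is arranged correctly, distinct realizations arise from distinct encodings, this produces the stated bound.

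First I would reduce the widths. Given $\Phi\in\tilde{\mathcal{N}}_{L_0,s_0,c}$ with hidden layers $1,\dots,L$, I repeatedly delete each hidden neuron all of whose incoming and outgoing weights vanish, i.e.\ remove the corresponding zero row of $W_i$, the corresponding entry of $b_i$, and the corresponding zero column of $W_{i+1}$. Such a deletion changes neither the realization nor the number of nonzero weights, and since it does not affect the weights of the remaining neurons the process terminates. In the resulting network every hidden neuron in layer $i$ carries a nonzero incoming or outgoing weight, so $m_i$ is at most the number of nonzero entries of $W_i$ plus that of $W_{i+1}$, hence at most the total number of nonzero weights, i.e.\ at most $s_0$. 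This is precisely the reduction announced before the lemma: every element of $\tilde{\mathcal{N}}_{L_0,s_0,c}$ equals the realization of a network with $L\le L_0$, all widths at most $s_0$, at most $s_0$ nonzero weights, all in $\mathcal{W}_c$.

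Next I would split into two cases. If some $W_j$ is the zero matrix, then the input to the $j$-th shifted activation is the zero vector, its output is a constant vector, and hence $R(\Phi)$ is a constant function; then $R(\Phi)^{-1}(1)\in\{\emptyset,[0,1]^d\}$, so such networks contribute at most two sets. Otherwise each of $W_1,\dots,W_{L+1}$ contains a nonzero entry, and these being $L+1$ distinct nonzero weights out of at most $s_0$ forces $L+1\le s_0$, so $L\le\min\{L_0,s_0\}=:\ell$. It then remains to count the non-constant realizations, which come from networks with at most $\ell$ hidden layers, widths equal to $s_0$ (after padding with zero rows and columns), and at most $s_0$ nonzero weights from $\mathcal{W}_c$. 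Such a network consists of the input matrix $W_1$, with at most $ds_0$ entries, together with at most $\ell$ further blocks, each a weight matrix (at most $s_0^2$ entries) paired with a bias vector (at most $s_0$ entries), i.e.\ at most $s_0^2+s_0\le(s_0+1)^2$ parameters per block; so the network has at most $M:=ds_0+\ell(s_0+1)^2$ scalar parameters in total. Encoding it as an $s_0$-tuple of pairs (parameter position, value), where the position ranges over these at most $M$ slots and the value over $\mathcal{W}_c$, with the value $0$ meaning that the token is unused, one checks that every such network is obtained from at least one tuple; hence the number of non-constant realizations is at most $(M\,|\mathcal{W}_c|)^{s_0}\le(M\,2^{c+2})^{s_0}$, using $|\mathcal{W}_c|=2^{c+1}+1\le2^{c+2}$. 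The at most two constant sets are absorbed by the remaining slack, which gives the claimed bound.

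The step I expect to be the main obstacle is arranging this encoding so that it is genuinely injective on realizations without paying a multiplicative factor for the number of admissible architectures: the slot layout must be fixed independently of the network, whereas the networks have varying depth. The resolution is to read $L$ off the encoding itself, which works because in the non-constant case every $W_i$ is nonzero, so the largest block index carrying a nonzero slot equals $L$, after which the block-to-parameter correspondence, and hence the realization, is determined. It is exactly this that necessitates both the preliminary width reduction and the constant/non-constant dichotomy; verifying it, along with the degenerate case $L=0$ and the bookkeeping of which block absorbs which bias vector, is routine, and everything else is elementary counting.
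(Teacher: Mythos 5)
Your proof is correct and takes essentially the same route as the paper's: reduce to networks of width at most $s_0$ and depth at most $\min\{s_0,L_0\}$, then bound the count by $\big(\text{number of parameter slots}\times|\mathcal{W}_c|\big)^{s_0}$ with $|\mathcal{W}_c|=2^{c+1}+1\leq 2^{c+2}$. You are more explicit than the paper about the width reduction, the constant-realization dichotomy justifying the depth bound, and the well-definedness of the encoding, but the core counting argument is identical.
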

    
    \begin{proof}
        First of all, if $s_0\leq L_0$, clearly only the last $s_0$ layers have influence on the realization of a neural network. Thus, an upper bound is given by counting the number of neural nets with at most $\min\{s_0,L_0\}$ layers, at most sparsity $s_0$, weights in $\mathcal{W}_c$ and $m_i\leq s_0$ for all $i\in\{1,\dots, L\}$. Each weight can take on $|\mathcal{W}_c|=2^{c+1}+1$ different values. The total number of weights can be bounded by
        \begin{align*}
            m_0m_1+\sum_{i=1}^{\min\{s_0,L\}}(m_i+1)m_{i+1} & \leq ds_0+\sum_{i=1}^{\min\{s_0,L_0\}}(s_0+1)s_0\\
            & \leq ds_0+\min\{s_0,L_0\}(s_0+1)^2\\
            & =:V.
        \end{align*}
        Note that if $s_0\leq L_0$, the input dimension does not influence the outcome. Therefore, there are at most
        \[
            \binom{V}{s_0}\leq V^{s_0}
        \]
        possible combinations to pick $s_0$ (possibly) nonzero weights. Thus
        \[
            \big|\mathcal{N}_{L_0,s_0,c}\big|\leq V^{s_0}\big(2^{c+1}+1\big)^{s_0}\leq \big(V2^{c+2}\big)^{s_0}.
        \]
    \end{proof}
    
\subsection{Conditions on the Bayes-Rule}
    
    In order to define the set of probability distributions we consider for approximation, we restrict the possible bayes rules. We then add a smoothness condition to the function $f_\mathbb{Q}$ near the boundary of the respective bayes rule. Intuitively, the boundary should satisfy some kind of smoothness condition so that it can be approximated by neural networks. Additionally, the set must be discretizable in some sense. When using $\mathcal{F}=\mathcal{F}_{\beta,B,d-1}$ the class of sets we use is similar to a class defined in \cite{petersen2018optimal}. Note, that the class used here is larger. This version depends on a set $\mathcal{F}$ which represents a class of boundary functions. The idea is that we can obtain different convergence rates for different classes using the same procedure. 

    \begin{definition}\label{Definition Sets}
        Let $\beta\geq 0$, $B>0$ and $d\in\mathbb{N}$ with $d\geq 2$. Additionally, let $r\in\mathbb{N}$, $\epsilon_1,\epsilon_2>0$, $\mathbb{Q}$ be a probability measure on $[0,1]^d\times\{0,1\}$ and $\mathcal{F}$ be a class of functions
        \[
            \gamma:[0,1]^{d-1}\rightarrow\mathbb{R}.
        \]  
        Define
        \begin{align*}
            \mathcal{I} & :=\{1,\dots,d\}\times\{-1,1\},\\
            \mathcal{D} & :=\bigg\{\prod_{i=1}^d[a_i,b_i]\ \bigg|\ 0\leq a_i<b_i\leq 1\bigg\}.
        \end{align*}
        and
        \begin{align*}
            \mathcal{H}_{\beta,B} & :=\big\{g\in\mathcal{F}_{\beta,B,1}\ \big|\  \forall \alpha< \beta :  \partial^\alpha g(0)=0 \big\}
        \end{align*}
        if $\beta>0$. Let $\mathcal{K}_{\mathbb{Q},\beta,B,\epsilon_1,\epsilon_2,r,d}^\mathcal{F}$ be the class of all subsets $H=H_1\cup\dots\cup H_u\subseteq [0,1]^d$ for $u\in\{0,\dots,r\}$ such that for $\nu=1,\dots,u$ there exist $(j_\nu,\iota_\nu)\in\mathcal{I},\  \gamma_\nu\in\mathcal{F},\ D_\nu=\prod_{i=1}^d[a_i^\nu,b_i^\nu]\in\mathcal{D}$ with the following properties.
        \begin{enumerate}
            \item For all $\nu=1,\dots,u$ we have
            \[
                H_\nu=D_\nu\cap \{x\in[0,1]^d\ |\ \iota_\nu x_{j_\nu}\leq \gamma_\nu(x_{-j_\nu})\}.
            \]
            \item $\lambda(D_{\nu_1}\cap D_{\nu_2})=0$ for $\nu_1\neq \nu_2$.
            \item If $\beta>0$, the following holds. For $\nu=1,\dots,u$ and all $x\in D_\nu\cap \partial H$ there exists $g_{\nu,x}\in\mathcal{H}_{\beta,B}$ such that for $y_{j_\nu}\in[\max\{0,x_{j_\nu}-\epsilon_1\},\min\{1,x_{j_\nu}+\epsilon_1\}]$ we have
            \begin{align*}
                |2f_\mathbb{Q}(y)-1| & \leq g_{\nu,x}(x_{j_\nu}-y_{j_\nu})\ \ \text{for}\ x_{j_\nu}-\epsilon_1\leq y_{j_\nu} \leq x_{j_\nu},\\
                |2f_\mathbb{Q}(y)-1| & \leq g_{\nu,x}(y_{j_\nu}-x_{j_\nu})\ \ \text{for}\ x_{j_\nu}\leq y_{j_\nu} \leq x_{j_\nu}+\epsilon_1,
            \end{align*}
            where $y=(x_1,\dots,x_{j_\nu-1},y_{j_\nu},x_{j_\nu+1},\dots,x_d)$.
            \item $b_{j_\nu}^\nu-a_{j_\nu}^\nu\geq\epsilon_2$ for all $\nu=1,\dots,u$.
        \end{enumerate}
    \end{definition} 
    
    \begin{figure}
        \begin{center}
            \includegraphics[scale=0.3]{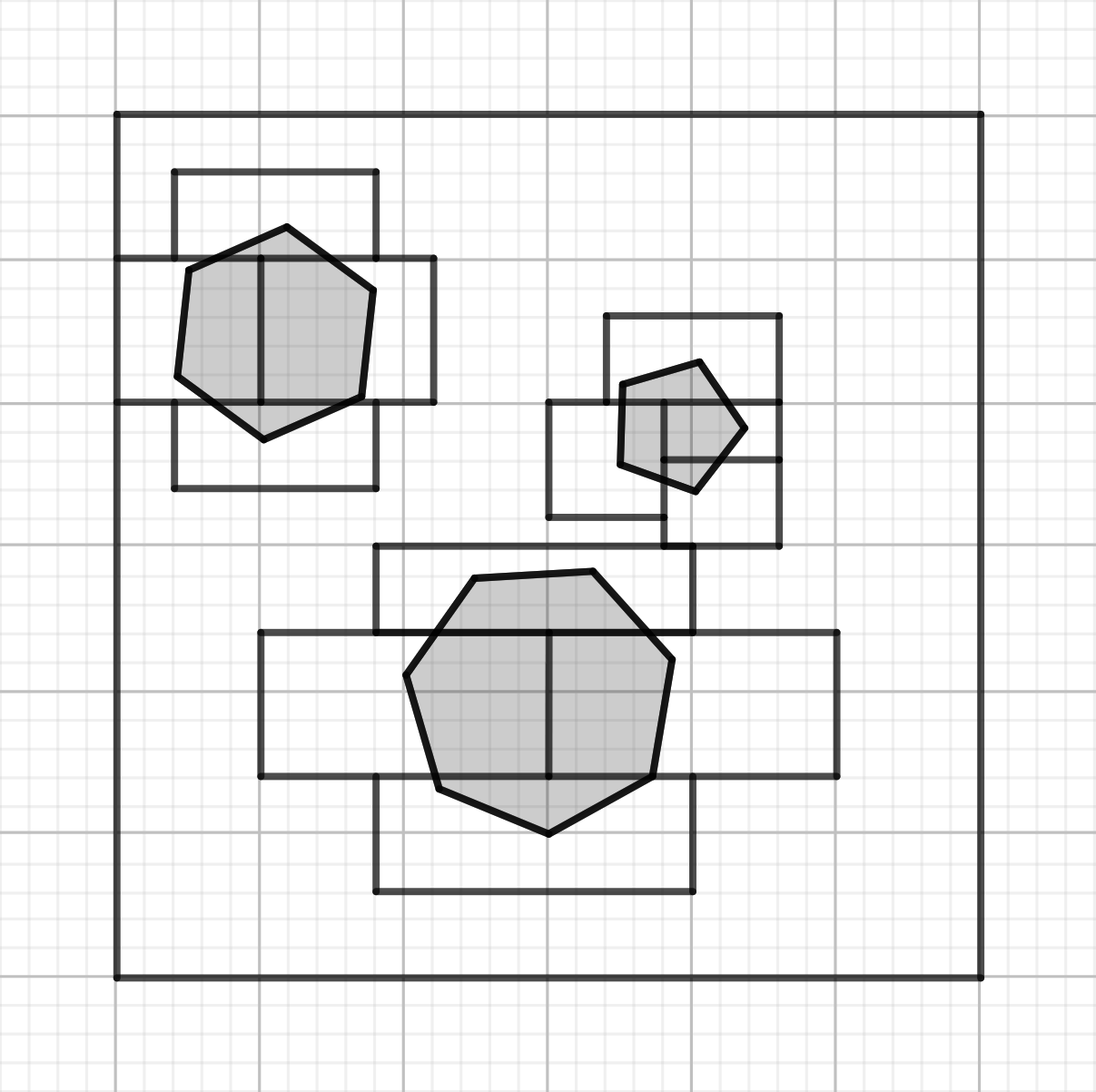}
        \end{center}
		\caption{Example for an element of $\mathcal{K}_{\mathbb{Q},\beta,B,\epsilon_1,\epsilon_2,12,2}^\mathcal{F}$, where $\mathcal{F}$ is the set of piecewise linear functions. The grey objects represent the set. The small boxes represent a possible choice for $D_1,\dots,D_{12}$.} \label{fig:Menge}
	\end{figure}
    
    Note that $g\in\mathcal{H}_{\beta,B}$ implies $g(0)=\partial ^0g(0)=0$. The idea is to use sets defined by realizations of neural networks to approximate $G^*_\mathbb{Q}\in \mathcal{K}_{\mathbb{Q},\beta,B,\epsilon_1,\epsilon_2,r,d}^\mathcal{F}$ from Definition $\ref{Definition Sets}$ for a suitable class $\mathcal{F}$ in order to apply Proposition \ref{Theorem Mammen}. Figure \ref{fig:Menge} shows an example for an element of $\mathcal{K}_{\mathbb{Q},\beta,B,\epsilon_1,\epsilon_2,12,2}^\mathcal{F}$, where $\mathcal{F}$ is the set of piecewise linear functions. The definition contains an additional condition on the function $f_\mathbb{Q}$ close to the boundary of $G^*_\mathbb{Q}$. Following the intuition mentioned in \cite{mammen1999smooth}, for $\beta>0$ condition (ii) in Theorem 2.1 means that  $f_\mathbb{Q}$ acts like $x^{\beta}$ close to the boundary of $G^*_\mathbb{Q}$, where $\kappa=1+\beta$. More precisely, condition (ii) requires that $f_\mathbb{Q}$ does not increase slower than $x^{\beta}$. In order to prove the combination of conditions (iii) and (iv), we require that $\beta$ is the correct rate, meaning that $f_\mathbb{Q}$ does not increase faster than $x^{\beta}$. In Section \ref{Lower Bound} we prove that this condition does not lower the complexity of the problem. Thus, the rates obtained by \cite{mammen1999smooth} are still optimal.
    
\subsection{Main Theorems}
    
    We begin by stating the central result of this article. We then use this result to show consistency results for more specific cases. The rates we obtain in Theorem \ref{Theorem Main} are optimal up to a $\mathrm{log}$ factor. In the following, all proofs of this section are given in Appendix \ref{Appendix - Convergence Rates for Neural Networks}.
    
    \begin{theo}\label{Theorem Main}
        Let $\beta\geq 0$, $B,\rho>0$ and $d\in\mathbb{N}$ with $d\geq 2$. Let $\mathcal{F}$ be a set of functions 
        \[
            \gamma:[0,1]^{d-1}\rightarrow\mathbb{R}
        \]
        such that the following holds. There exist $\epsilon_0,C_1,C_2>0$ and $C_3,C_4\in\mathbb{N}$ such that for any $\gamma\in \mathcal{F}$ and any $\epsilon\in(0,\epsilon_0)$ there is a neural network $\Phi$ with $L\leq L_0(\epsilon):=C_1 \lceil\log(\epsilon^{-1})\rceil$ layers, sparsity $s\leq s_0(\epsilon):=C_2 \epsilon^{-\rho}\log(\epsilon^{-1})$ and weights in $\mathcal{W}_c$ with $c=c_0(\epsilon):=C_3+C_4\lceil\log(\epsilon^{-1})\rceil$ such that
        \[
            \|R(\Phi)(x)-\gamma\|_\infty\leq\epsilon.
        \]
        Define $\kappa:=1+\beta$ and let $\mathfrak{Q}$ be a class of potential joint distributions $\mathbb{Q}$ of $(X,Y)$ such that the following conditions hold. 
        \begin{itemize}
            \item[(a)] There is a constant $M>1$ such that for all $\mathbb{Q}\in\mathfrak{Q}$ the marginal distribution of $\mathbb{Q}_X$ has a Lebesgue density bounded by $M$.
            \item[(b)] There are constants $r\in\mathbb{N}$ and $\epsilon_1,\epsilon_2>0$ such that for all $\mathbb{Q}\in\mathfrak{Q}$ the bayes rule satisfies $G_{\mathbb{Q}}^*\in \mathcal{K}_{\mathbb{Q},\beta,B,\epsilon_1,\epsilon_2,r,d}^\mathcal{F}$.
            \item[(c)] We have
            \[
                d_{f_\mathbb{Q}}(G,G^*_\mathbb{Q})\geq c_1d^\kappa_\Delta(G,G^*_\mathbb{Q})
            \]
            for some constant $c_1>0$, all $G\in\mathcal{N}$ and all $\mathbb{Q}\in\mathfrak{Q}$, where
            \[
                \mathcal{N}:=\bigcup_{n\in\mathbb{N}}\mathcal{N}_{n,n,n}
            \]
            is the class of sets corresponding to any neural network. 
        \end{itemize}
        Let
        \[
            \tau_n:=\frac{n^{\frac{1}{2\kappa+\rho-1}}}{\log^{\frac{2}{\rho}}(n)}.
        \]
        Then there exist constants $C_1',C_2'>0$ and $C_3'\in\mathbb{N}$  such that for all $p\geq 1$ we have
        \begin{align*}
            \underset{n\rightarrow\infty}{\lim\ \sup}\ & \underset{\mathbb{Q}\in\mathfrak{Q}}{\sup}\ \tau_n^{p\kappa}\ \mathbb{E}\big[d_{f_\mathbb{Q}}^p(\hat{G}_n,G^*_\mathbb{Q})\big]<\infty,\\
            \underset{n\rightarrow\infty}{\lim\ \sup}\ & \underset{\mathbb{Q}\in\mathfrak{Q}}{\sup}\ \tau_n^p\ \mathbb{E}\big[d_\Delta^p(\hat{G}_n,G^*_\mathbb{Q})\big]<\infty, 
        \end{align*}
        where
        \[
            \hat{G}_n:=\underset{G\in\mathcal{N}_n}{\mathrm{arg\ min}}\ R_n(G).
        \]
        with $\mathcal{N}_n=\mathcal{N}_{C_1'L_0(\tau_n^{-1}),C_2's_0(\tau_n^{-1}),C_3'c_0(\tau_n^{-1})}$.
    \end{theo}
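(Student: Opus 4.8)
The plan is to obtain this from Proposition~\ref{Theorem Mammen}, applied with noise exponent $\kappa=1+\beta$, the same $\rho$, the stated sequence $\tau_n$, and the sets $\mathcal{N}_n=\mathcal{N}_{C_1'L_0(\tau_n^{-1}),\,C_2's_0(\tau_n^{-1}),\,C_3'c_0(\tau_n^{-1})}$ (rounding up to integers where the definition requires it), the constants $C_1',C_2',C_3'$ being pinned down in the course of the argument. Since $\tau_n\le n^{1/(\rho+2\kappa-1)}$, one has $\tilde\tau_n=\tau_n$ in that proposition, so its conclusion is exactly the two displayed limsup bounds once hypotheses (i)--(iv) are checked. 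Hypothesis~(i) is immediate from~(a): every Borel set is $\mathbb{Q}$-measurable, realizations of ReLU networks are continuous and piecewise linear hence Borel, and $G_\mathbb{Q}^*\in\mathcal{K}_{\mathbb{Q},\beta,B,\epsilon_1,\epsilon_2,r,d}^\mathcal{F}$ is Borel; hypothesis~(ii) is literally assumption~(c), noting $\bigcup_n\mathcal{N}_n\subseteq\mathcal{N}$.

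The core of the proof is hypothesis~(iii). Fix a large $n$ and set $\epsilon:=\tau_n^{-1}$; as $\tau_n\to\infty$ one may assume $\epsilon$ is below $\epsilon_0$, $\epsilon_1$, and any further absolute threshold appearing below. Writing $G_\mathbb{Q}^*=H_1\cup\dots\cup H_u$ ($u\le r$) as in Definition~\ref{Definition Sets} with data $(j_\nu,\iota_\nu,\gamma_\nu,D_\nu)$, I would take for each $\nu$ the network $\Phi_\nu$ from the hypothesis on $\mathcal{F}$ with $\|R(\Phi_\nu)-\gamma_\nu\|_\infty\le\epsilon$, of depth $\le L_0(\epsilon)$, sparsity $\le s_0(\epsilon)$, weights in $\mathcal{W}_{c_0(\epsilon)}$, round the corners of $D_\nu=\prod_i[a_i^\nu,b_i^\nu]$ into $\mathcal{W}_c$ to get $\tilde D_\nu$, and assemble one network $\Phi$ whose realization on $[0,1]^d$ equals, with $\sigma=\max\{\cdot,0\}$ the ReLU,
\[
  \max_{1\le\nu\le u}\min\Bigl\{\,1-\sigma\bigl(\iota_\nu x_{j_\nu}-R(\Phi_\nu)(x_{-j_\nu})\bigr),\ \ 1-{\textstyle\sum_{i=1}^d}\bigl[\sigma(\tilde a_i^\nu-x_i)+\sigma(x_i-\tilde b_i^\nu)\bigr]\Bigr\},
\]
built from $\min(s,t)=s-\sigma(s-t)$, $\max(s,t)=s+\sigma(t-s)$ and $t=\sigma(t)-\sigma(-t)$, so that all auxiliary weights are $\pm1$, $\pm\tfrac12$ or entries of the $\tilde D_\nu$, hence in $\mathcal{W}_c$ for $c\ge1$. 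This assembly adds only a constant number of layers, $O(rd)$ nonzero weights (absorbed since $s_0(\epsilon)\to\infty$), and fixes $c=C_3'c_0(\epsilon)$, so for suitable $C_1',C_2',C_3'$ the set $\hat H:=R(\Phi)^{-1}(1)=\bigcup_\nu\bigl(\tilde D_\nu\cap\{\iota_\nu x_{j_\nu}\le R(\Phi_\nu)(x_{-j_\nu})\}\bigr)$ lies in $\mathcal{N}_n$. For the error I would use $\hat H\Delta G_\mathbb{Q}^*\subseteq\bigcup_\nu(\hat H_\nu\Delta H_\nu)$ together with the elementary inclusion $\hat H_\nu\Delta H_\nu\subseteq\bigl(D_\nu\cap\{|\iota_\nu x_{j_\nu}-\gamma_\nu(x_{-j_\nu})|\le\epsilon\}\bigr)\cup(D_\nu\Delta\tilde D_\nu)$, the second piece having $\mathbb{Q}_X$-measure $O(d\,2^{-c})$. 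On the first, thin piece, fibring in the $x_{j_\nu}$-direction (the fibre crosses $\partial G_\mathbb{Q}^*$ at $\iota_\nu\gamma_\nu(x_{-j_\nu})$), condition~3 of Definition~\ref{Definition Sets} together with the Taylor estimate $g(t)\le C(\beta)B|t|^\beta$ valid for $g\in\mathcal{H}_{\beta,B}$ (all derivatives of order $<\beta$ vanish at $0$) gives $|2f_\mathbb{Q}-1|\le C(\beta)B\epsilon^\beta$ there; integrating over a fibre-interval of length $\le2\epsilon$, using $\mathbb{Q}_X\le M\lambda$, and summing over $\nu\le r$ produces $d_{f_\mathbb{Q}}(\hat H,G_\mathbb{Q}^*)\le C'(\epsilon^{1+\beta}+2^{-c})$. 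Choosing $C_3'$ so that $2^{-c}\le\epsilon^\kappa$ and recalling $\epsilon=\tau_n^{-1}$ yields $d_{f_\mathbb{Q}}(\hat H,G_\mathbb{Q}^*)\le c_2\tau_n^{-\kappa}$ with $c_2$ uniform over $\mathfrak{Q}$; for $\beta=0$ one simply bounds $|2f_\mathbb{Q}-1|\le1$ and $\kappa=1$.

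For hypothesis~(iv) I would invoke Lemma~\ref{Lemma Anzahl der Elemente}: with $s_0=C_2's_0(\tau_n^{-1})$, $L_0=C_1'L_0(\tau_n^{-1})$, $c=C_3'c_0(\tau_n^{-1})$,
\[
  \log\bigl(|\mathcal{N}_n|\bigr)\le s_0\Bigl(\log\bigl(ds_0+\min\{s_0,L_0\}(s_0+1)^2\bigr)+(c+2)\log 2\Bigr)=O\bigl(\tau_n^\rho(\log\tau_n)^2\bigr),
\]
using $s_0(\tau_n^{-1})=C_2\tau_n^\rho\log\tau_n$ and $c+2=O(\log\tau_n)$. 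Substituting $\tau_n=n^{1/(2\kappa+\rho-1)}/\log^{2/\rho}(n)$ turns $\tau_n^\rho$ into $n^{\rho/(2\kappa+\rho-1)}/\log^2 n$ while $\log\tau_n=O(\log n)$, so the two logarithmic factors cancel and $\log(|\mathcal{N}_n|)=O(n^{\rho/(\rho+2\kappa-1)})$ — this is precisely why the factor $\log^{2/\rho}(n)$ appears in $\tau_n$. With (i)--(iv) in place, Proposition~\ref{Theorem Mammen} applies and, using $\tilde\tau_n=\tau_n$, delivers the claim.

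I expect the main obstacle to sit inside step~(iii): the book-keeping of the network assembly (all weights staying in $\mathcal{W}_c$, depth/sparsity/precision inflated by only constant factors, corners of $D_\nu$ rounded) is tedious but routine, whereas the genuinely delicate point is justifying the bound $|2f_\mathbb{Q}-1|\le C(\beta)B\epsilon^\beta$ on the \emph{whole} thin set $D_\nu\cap\{|\iota_\nu x_{j_\nu}-\gamma_\nu|\le\epsilon\}$. One must check that every fibre meeting this set passes within distance $\epsilon_1$ of a point of $D_\nu\cap\partial G_\mathbb{Q}^*$, so that condition~3 of Definition~\ref{Definition Sets} genuinely applies; this forces $\epsilon$ to be small relative to $\epsilon_1,\epsilon_2$ and needs a careful treatment of fibres near the faces of $D_\nu$, using conditions~2 and~4 of that definition.
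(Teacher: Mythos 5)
Your overall route is the paper's: reduce to Proposition~\ref{Theorem Mammen} with $\tilde\tau_n=\tau_n$, verify (i) and (ii) trivially, get (iv) from Lemma~\ref{Lemma Anzahl der Elemente} exactly as you compute (the cancellation of the two $\log$ factors is indeed the point of the $\log^{2/\rho}(n)$ in $\tau_n$), and spend all the real effort on (iii), which the paper isolates as Theorem~\ref{Theorem Approximation Result}. Your max/min assembly of the network is a legitimate and arguably cleaner alternative to the paper's ``sum of soft indicators minus $d$'' construction, and your set inclusion $\hat H_\nu\Delta H_\nu\subseteq\bigl(D_\nu\cap\{|\iota_\nu x_{j_\nu}-\gamma_\nu|\le\epsilon\}\bigr)\cup(D_\nu\setminus\tilde D_\nu)$ is correct.

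However, the step you yourself flag as delicate is a genuine gap, and for $\beta>0$ it breaks the argument as written. The pointwise bound $|2f_\mathbb{Q}-1|\le C(\beta)B\epsilon^\beta$ is only available, via condition~3 of Definition~\ref{Definition Sets}, along fibres through points of $D_\nu\cap\partial H$; it fails on fibres $x_{-j_\nu}$ for which the crossing value $\gamma_\nu(x_{-j_\nu})$ lies just \emph{outside} the $j_\nu$-range of $D_\nu$ (or between $\tilde b^\nu_{j_\nu}$ shifted by rounding and $b^\nu_{j_\nu}$). On such a fibre $H_\nu$ contains (or misses) the entire segment of $D_\nu$, there is no boundary point of $H$ on the fibre inside $D_\nu$, and $f_\mathbb{Q}$ is unconstrained there (e.g.\ identically $1$, since $H$ may continue into a neighbouring $D_\mu$). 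Your candidate set $\tilde D_\nu\cap\{\iota_\nu x_{j_\nu}\le R(\Phi_\nu)\}$ can then disagree with $H_\nu$ on a slab of width of order $\epsilon$ adjacent to the face $x_{j_\nu}=\tilde b^\nu_{j_\nu}$, over a set of $x_{-j_\nu}$ of measure bounded away from zero (take $\gamma_\nu\equiv b^\nu_{j_\nu}+\epsilon/2$). With only $|2f_\mathbb{Q}-1|\le 1$ available there, this contributes $\asymp\epsilon=\tau_n^{-1}$ to $d_{f_\mathbb{Q}}$, which is much larger than the required $\tau_n^{-\kappa}$ when $\kappa>1$. This cannot be repaired by ``more careful analysis'' of the same candidate set; the set itself is not good enough. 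The paper's fix is the auxiliary network $\Phi^2_{\gamma_\nu}$ in the proof of Theorem~\ref{Theorem Approximation Result}: it post-composes the approximation with a ReLU clipping that shifts $\hat\gamma_\nu$ outward by $h_{\epsilon/2}$ on either side of the midpoint of $[\tilde a^\nu_{j_\nu},\tilde b^\nu_{j_\nu}]$, guaranteeing $\hat\gamma_\nu\ge\gamma_\nu$ whenever $\gamma_\nu\ge\tilde b^\nu_{j_\nu}$ and $\hat\gamma_\nu\le\gamma_\nu$ whenever $\gamma_\nu\le\tilde a^\nu_{j_\nu}$, so that on all such fibres the approximating set and $H_\nu$ agree exactly inside $\tilde D_\nu$ and the only remaining error lives on fibres where the crossing genuinely lies in $(\tilde a^\nu_{j_\nu},\tilde b^\nu_{j_\nu})$, where condition~3 applies. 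You would need to incorporate this (or an equivalent one-sided correction) into your construction; for $\beta=0$ your argument goes through as stated.
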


\subsection{Results for Regular Boundaries}

    We can now prove results for specific classes of sets $\mathcal{F}$ to obtain convergence results. A first important example is the class $\mathcal{F}_{\beta,B,d}$. The following Lemma is a consequence of Theorem 5 in  \cite{schmidt2020nonparametric}.
    
    \begin{lemma}\label{Lemma Theorem 5}
        Let $\beta,B>0$ and $d\in\mathbb{N}$. Then there exist $\epsilon_0,c_1,c_2>0, c_3,c_4\in\mathbb{N}$ such that the following holds. For any function $\gamma\in\mathcal{F}_{\beta,B,d}$ and any $\epsilon\in(0,\epsilon_0)$, there exists a neural network $\Phi$ with $L\leq L_0(\epsilon):=c_1 \lceil\log(\epsilon^{-1})\rceil$ layers, sparsity $s\leq s_0(\epsilon):=c_2 \epsilon^{-\frac{d}{\beta}}\log(\epsilon^{-1})$ and weights in $\mathcal{W}_c$ with $c=c_0(\epsilon):=c_3+c_4\lceil\log(\epsilon^{-1})\rceil$ such that
        \[
            \|R(\Phi)(x)-\gamma\|_\infty\leq\epsilon.
        \]
    \end{lemma}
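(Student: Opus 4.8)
The plan is to invoke Theorem 5 of \cite{schmidt2020nonparametric} essentially verbatim, after translating its statement into the network conventions used here. That theorem asserts that for a $\beta$-Hölder function $\gamma$ on $[0,1]^d$ one can find, for any integer $N$ large enough, a ReLU network with $O(\log N)$ layers, $O(N \log N)$ nonzero parameters, all weights bounded in absolute value by $1$, whose realization approximates $\gamma$ in the sup-norm with error $O(N^{-\beta/d})$. First I would set $N := \lceil \epsilon^{-d/\beta}\rceil$ so that the approximation error $c\,N^{-\beta/d}$ is at most $\epsilon$ (adjusting constants and shrinking $\epsilon_0$ so that $N$ is in the admissible range of the cited theorem); this converts the depth bound $O(\log N)$ into $c_1\lceil\log(\epsilon^{-1})\rceil$ and the sparsity bound $O(N\log N)$ into $c_2\,\epsilon^{-d/\beta}\log(\epsilon^{-1})$, which are exactly the claimed orders.

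The one genuine discrepancy to address is the weight discretization: our networks are required to have weights in $\mathcal{W}_c = \{k2^{-c}\}$, i.e. on a dyadic grid of resolution $2^{-c}$, whereas the cited construction only guarantees weights in $[-1,1]$. So the second step is a rounding argument. I would take the network produced by \cite{schmidt2020nonparametric}, round every weight to the nearest element of $\mathcal{W}_c$, incurring a per-weight perturbation of at most $2^{-c}$, and then propagate this perturbation through the $L$ layers. Since the inputs lie in $[0,1]^d$, all weights are bounded by $1$, the widths are controlled by the sparsity $s$, and ReLU is $1$-Lipschitz, a standard layer-by-layer estimate shows the resulting change in the realization is bounded by something like $(\text{poly in }s,L)\cdot 2^{-c}$; more carefully, the accumulated error is at most $L \cdot (\text{width})\cdot(\max\text{ intermediate value})\cdot 2^{-c}$, and the intermediate values are themselves polynomially bounded in $s$ and the number of layers. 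Choosing $c = c_0(\epsilon) := c_3 + c_4\lceil\log(\epsilon^{-1})\rceil$ makes $2^{-c}$ of order $\epsilon^{c_4}$ times a constant, which for $c_4$ chosen large enough (depending only on $\beta,d$, since $s$ and $L$ are themselves at most polynomial in $\epsilon^{-1}$ and $\log(\epsilon^{-1})$) dominates the polynomial blow-up and keeps the total rounding error below, say, $\epsilon/2$. Combined with the original $\epsilon/2$ approximation error (rescale $N$ accordingly) this yields the claimed $\|R(\Phi) - \gamma\|_\infty \le \epsilon$.

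Finally I would record that rounding does not increase the number of nonzero weights (a zero weight stays zero, since $0\in\mathcal{W}_c$) nor the number of layers, so the depth and sparsity bounds $L \le c_1\lceil\log(\epsilon^{-1})\rceil$ and $s \le c_2 \epsilon^{-d/\beta}\log(\epsilon^{-1})$ are preserved, and all weights now lie in $\mathcal{W}_{c_0(\epsilon)}$ as required. The constants $\epsilon_0, c_1, c_2, c_3, c_4$ depend only on $\beta, B, d$.

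The main obstacle is the rounding step: one must be careful that the intermediate (pre-activation) values of the network stay polynomially bounded in $s$ and $L$ — this is where one uses that all weights are in $[-1,1]$, the input is in $[0,1]^d$, and the widths are bounded by $s$ — so that choosing $c$ logarithmic in $\epsilon^{-1}$ (with a sufficiently large multiplicative constant $c_4$) suffices to absorb the perturbation. Everything else is bookkeeping on the exponents, converting the "number of parameters $N$" parametrization of \cite{schmidt2020nonparametric} into the "error $\epsilon$" parametrization used here via $N \asymp \epsilon^{-d/\beta}$.
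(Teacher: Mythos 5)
Your proposal matches the paper's proof essentially step for step: the paper likewise applies Theorem 5 of \cite{schmidt2020nonparametric} with $N$ the smallest integer exceeding a constant multiple of $\epsilon^{-d/\beta}$ and $m\asymp\lceil\log(\epsilon^{-1})\rceil$ to obtain a network within $\epsilon/2$ of $\gamma$, and then discretizes the weights to grid size $2^{-c}$ with $c=c_3+c_4\lceil\log(\epsilon^{-1})\rceil$, absorbing the remaining $\epsilon/2$ via the weight-perturbation bound from the proof of Lemma 12 of the same reference. Your layer-by-layer rounding argument is precisely that cited perturbation estimate (including the observation that $0\in\mathcal{W}_c$ preserves sparsity), so the two arguments coincide.
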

    
    \begin{corollar}\label{Corollar main}
        Let $\beta_1\geq 0$, $B_1,\beta_2,B_2>0$ and $d\in\mathbb{N}$ with $d\geq 2$. Let $\mathfrak{Q}$ be a class of potential joint distributions $\mathbb{Q}$ of $(X,Y)$. Assume (a),(b),(c) from Theorem \ref{Theorem Main} hold with $\rho:=\frac{d-1}{\beta_2}$, $\beta:=\beta_1$, $B:=B_1$ as well as $\mathcal{F}:=\mathcal{F}_{\beta_2,B_2,d-1}$. Let
        \[
            \tau_n:=\frac{n^{\frac{1}{2\kappa+\rho-1}}}{\log^{\frac{2}{\rho}}(n)}.
        \]
        Then there exist constants $C_1',C_2'>0$ and $C_3'\in\mathbb{N}$  such that for all $p\geq 1$ we have
        \begin{align*}
            \underset{n\rightarrow\infty}{\lim\ \sup}\ & \underset{\mathbb{Q}\in\mathfrak{Q}}{\sup}\ \tau_n^{p\kappa}\ \mathbb{E}\big[d_{f_\mathbb{Q}}^p(\hat{G}_n,G^*_\mathbb{Q})\big]<\infty,\\
            \underset{n\rightarrow\infty}{\lim\ \sup}\ & \underset{\mathbb{Q}\in\mathfrak{Q}}{\sup}\ \tau_n^p\ \mathbb{E}\big[d_\Delta^p(\hat{G}_n,G^*_\mathbb{Q})\big]<\infty, 
        \end{align*}
        where
        \[
            \hat{G}_n:=\underset{G\in\mathcal{N}_n}{\mathrm{arg\ min}}\ R_n(G)
        \]
        with $\mathcal{N}_n=\mathcal{N}_{C_1'L_0(\tau_n^{-1}),C_2's_0(\tau_n^{-1}),C_3'c_0(\tau_n^{-1})}$ and $L_0,s_0,c_0$ from Lemma \ref{Lemma Theorem 5}.
    \end{corollar}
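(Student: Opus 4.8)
The plan is to obtain Corollary \ref{Corollar main} as an immediate specialization of Theorem \ref{Theorem Main}. Assumptions (a), (b), (c) are assumed outright, so the only nontrivial point is to verify that the class $\mathcal{F}:=\mathcal{F}_{\beta_2,B_2,d-1}$ satisfies the neural-network approximation hypothesis appearing in the statement of Theorem \ref{Theorem Main}; this is exactly what Lemma \ref{Lemma Theorem 5} supplies once it is applied in dimension $d-1$.

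First I would invoke Lemma \ref{Lemma Theorem 5} with the dimension parameter set to $d-1$ (legitimate since $d\geq 2$), the smoothness parameter set to $\beta_2$, and radius $B_2$. This produces constants $\epsilon_0,c_1,c_2>0$ and $c_3,c_4\in\mathbb{N}$ such that for every $\gamma\in\mathcal{F}_{\beta_2,B_2,d-1}$ and every $\epsilon\in(0,\epsilon_0)$ there is a network $\Phi$ with at most $c_1\lceil\log(\epsilon^{-1})\rceil$ layers, sparsity at most $c_2\epsilon^{-(d-1)/\beta_2}\log(\epsilon^{-1})$, weights in $\mathcal{W}_c$ with $c=c_3+c_4\lceil\log(\epsilon^{-1})\rceil$, and $\|R(\Phi)(x)-\gamma\|_\infty\leq\epsilon$. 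With the choice $\rho:=\frac{d-1}{\beta_2}$ made in the corollary, the sparsity bound reads $c_2\epsilon^{-\rho}\log(\epsilon^{-1})$, which is precisely the form $s_0(\epsilon)=C_2\epsilon^{-\rho}\log(\epsilon^{-1})$ demanded in Theorem \ref{Theorem Main}; the layer and weight-resolution bounds are likewise of the required forms $L_0(\epsilon)=C_1\lceil\log(\epsilon^{-1})\rceil$ and $c_0(\epsilon)=C_3+C_4\lceil\log(\epsilon^{-1})\rceil$. Hence the approximation hypothesis of Theorem \ref{Theorem Main} holds for $\mathcal{F}=\mathcal{F}_{\beta_2,B_2,d-1}$ with $(\epsilon_0,C_1,C_2,C_3,C_4)=(\epsilon_0,c_1,c_2,c_3,c_4)$.

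Combining this with (a), (b), (c), we are in the setting of Theorem \ref{Theorem Main} with $\beta=\beta_1$, $B=B_1$ and the stated $\rho$, so that $\kappa=1+\beta_1$. Applying Theorem \ref{Theorem Main} then yields the two displayed $\limsup$ bounds with $\tau_n=n^{1/(2\kappa+\rho-1)}/\log^{2/\rho}(n)$ and with $\hat{G}_n$ the empirical risk minimizer over $\mathcal{N}_n=\mathcal{N}_{C_1'L_0(\tau_n^{-1}),C_2's_0(\tau_n^{-1}),C_3'c_0(\tau_n^{-1})}$, where $L_0,s_0,c_0$ are exactly the functions produced by Lemma \ref{Lemma Theorem 5}; this is the assertion of the corollary. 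Since the argument is purely a hypothesis check, there is essentially no obstacle: the only point requiring care is the bookkeeping of constants, i.e.\ confirming that the functional forms of $L_0,s_0,c_0$ delivered by Lemma \ref{Lemma Theorem 5} literally match those required by Theorem \ref{Theorem Main} after the substitution $\rho=(d-1)/\beta_2$, which they do by construction. One should also note in passing that $\tau_n\to\infty$, so $\tau_n^{-1}\in(0,\epsilon_0)$ for all large $n$ and the approximation guarantee is applicable with $\epsilon=\tau_n^{-1}$; but this is already absorbed into the proof of Theorem \ref{Theorem Main}.
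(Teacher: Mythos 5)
Your proposal is correct and matches the paper's (implicit) argument exactly: the corollary is a direct specialization of Theorem \ref{Theorem Main}, with Lemma \ref{Lemma Theorem 5} applied in dimension $d-1$ supplying the neural-network approximation hypothesis for $\mathcal{F}_{\beta_2,B_2,d-1}$ with $\rho=(d-1)/\beta_2$. No gaps.
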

    
    Corollary \ref{Corollar main} together with Theorem \ref{Theorem Lower Bound} from the next chapter prove optimal convergence rates, which was the main goal of this paper.\newline
    
    \noindent Following up, note that the rates we receive from Corollary \ref{Corollar main} are affected by the curse of dimensionality. Observe that the rates obtained by Theorem \ref{Theorem Main} are influenced by condition (c) on the one hand and the ability of neural networks to approximate sets in $\mathcal{F}$ on the other. The dependence on the dimension $d$ in Corollary \ref{Corollar main} comes from the latter. Thus, a natural approach to circumvent the curse of dimensionality is to approximate a smaller set $\mathcal{F}$. Intuitively, it is clear that without strong restrictions on the distribution we can only overcome the curse if the complexity of the boarders of the sets we approximate is small enough so that they themselves can overcome the curse. In the literature, many different sets are considered which infer useful approximation capabilities of neural networks. Here, we use a class of sets introduced by  \cite{schmidt2020nonparametric} which is close to class $\mathcal{F}_{\beta,B,d}$. 
    \begin{definition}\label{Definition curse}
        Let $r\in\mathbb{N}$, $t\in\mathbb{N}^{r}$, $d\in\mathbb{N}^{r+1}$, $\beta\in\mathbb{R}^r$ and $B>0$ with $t_i\leq d_i$,  $\beta_i>0$ for $i=1,\dots,r$, $d_{r+1}=1$. Define 
        \begin{align*}
            \mathcal{G}_{r,t,\beta,B,d}:=\big\{ \gamma = \gamma_r\circ\dots\circ\gamma_1\ \big|\ & \gamma_i=(\gamma_{ij}\circ\iota_{ij})_{j=1}^{d_{i+1}},\ \gamma_{ij}\in\mathcal{F}_{\beta_i,B,t_i},\ \iota_{ij}\in\mathcal{ID}_{i},\\
            & \gamma_{ij}:[0,1]^{t_{i}}\rightarrow[0,1]\ \text{for}\ i=1,\dots,r-1,\\
            & \gamma_{r1}:[0,1]^{t_{r}}\rightarrow\mathbb{R} \big\},
        \end{align*}
        where 
        \begin{align*}
            \mathcal{ID}_{i}=\big\{\iota:[0,1]^{d_{i}}\rightarrow[0,1]^{t_i}\ \big|\ \iota(x)=(x_{i_1},\dots,x_{i_{t_i}}),\ i_j\in\{1,\dots,d_i\} \big\}.
        \end{align*}
    \end{definition}
    Instead of requiring that $\gamma_{ij}$ is supported on $[0,1]^{t_i}$, we could have used the condition that $\gamma_{ij}$ is supported on $\prod_{k=1}^{t_i}[a_k,b_k]$ for some values $a_k,b_k\in\mathbb{R}$. However, this does not enlarge the class considerably. It can easily be seen that we can instead increase the bound $B$ to find an even larger class. The idea for using the set $\mathcal{G}_{r,t,\beta,B,d}$ is that its complexity does not depend on the input dimension $d_1$, but only on the most difficult component to approximate. The complexity of the components depend on their effective dimension $t_i$ and their implied smoothness. As described by  \cite{schmidt2020nonparametric}, the correct smoothness parameter to consider is
    \[
        \beta_i^*:=\beta_i\prod_{k=i+1}^r\min\{\beta_k,1\}.
    \]
    Examples for sets that can profit from Definition \ref{Definition curse} are additive models ($r=1$, $t_1=1$), interaction models of order $k$ ($r=1$, $t_1=k$), or multiplicative models (they are a subset of $\mathcal{G}_{r,t,\beta,B,d}$ when $r=\log_2(d)+1$, $t_i=2$ for all $i$). Next, our goal is to establish a convergence result when the set of boundary functions is $\mathcal{G}_{r,t,\beta,B,d}$. Similarly to the approach above, we first provide a lemma which provides approximation results using neural networks.
    
    \begin{lemma}\label{Lemma curse}
        Let $r\in\mathbb{N}$, $t\in\mathbb{N}^{r}$, $d\in\mathbb{N}^{r+1}$, $\beta\in\mathbb{R}^r$ and $B>0$ with $t_i\leq d_i$,  $\beta_i>0$ for $i=1,\dots,r$, $d_{r+1}=1$. Let $\mathcal{G}_{r,t,\beta,B,d}$ be defined as in Definition \ref{Definition curse} and define
        \[
            \rho:=\underset{i=1,\dots,r}{\max}\left(\frac{t_i}{\beta_i^*}\right).
        \] 
        Then there exist $\epsilon_0,c_1,c_2>0, c_3,c_4\in\mathbb{N}$ such that the following holds. For any function $\gamma\in\mathcal{G}_{r,t,\beta,B,d}$ and any $\epsilon\in(0,\epsilon_0)$, there exists a neural network $\Phi$ with $L\leq L_0(\epsilon) := c_1 \lceil\log(\epsilon^{-1})\rceil$ layers, sparsity $s\leq s_0(\epsilon):=c_2 \epsilon^{-\rho}\log(\epsilon^{-1})$ and weights in $\mathcal{W}_c$ with $c=c_0(\epsilon):=c_3+c_4\lceil\log(\epsilon^{-1})\rceil$ such that
        \[
            \|R(\Phi)(x)-\gamma\|_\infty\leq\epsilon.
        \]
    \end{lemma}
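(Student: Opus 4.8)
\emph{Proof strategy.} The plan is to approximate each scalar building block $\gamma_{ij}\in\mathcal{F}_{\beta_i,B,t_i}$ of $\gamma=\gamma_r\circ\cdots\circ\gamma_1$ by a ReLU network via Lemma \ref{Lemma Theorem 5}, then to compose and parallelise these blocks while tracking how the layerwise approximation errors propagate through the composition. The coordinate selections $\iota_{ij}\in\mathcal{ID}_i$ are column-selection matrices, so they can be merged into the first weight matrix of the corresponding block without leaving $\mathcal{W}_c$ and without extra parameters; hence it suffices to approximate each $\gamma_{ij}$ on $[0,1]^{t_i}$.

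First I would fix tolerances $\delta_1,\dots,\delta_r>0$ (chosen below) and apply Lemma \ref{Lemma Theorem 5} to each $\gamma_{ij}$ with accuracy $\delta_i$, obtaining a network with $\lesssim\lceil\log(\delta_i^{-1})\rceil$ layers, sparsity $\lesssim\delta_i^{-t_i/\beta_i}\log(\delta_i^{-1})$ and weights in $\mathcal{W}_{c_i}$ with $c_i\lesssim\lceil\log(\delta_i^{-1})\rceil$. For $i<r$ I post-compose each block with a small fixed ReLU gadget realising the clipping $z\mapsto\min\{\max\{z,0\},1\}=\sigma(z)-\sigma(z-1)$ coordinatewise (weights in $\{-1,0,1\}$, $O(1)$ extra layers and parameters): this forces the approximate layer-$i$ output into $[0,1]^{d_{i+1}}$, the domain on which the layer-$(i+1)$ blocks were built, and since the projection onto $[0,1]$ is non-expansive it does not increase the error. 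Within a layer the $d_{i+1}$ (clipped) blocks are padded to a common depth by appending identity ReLU layers and stacked in parallel via block-diagonal weight matrices, and consecutive layers are joined by one extra ReLU layer with identity weight matrix (legitimate because the clipped outputs are non-negative). This construction never multiplies two nontrivial weight matrices, so every weight of the resulting network $\Phi$ lies in a common $\mathcal{W}_c$ with $c:=\max_i c_i$ (using $\mathcal{W}_{c'}\subseteq\mathcal{W}_c$ for $c'\le c$), while its depth is $\lesssim\sum_i\lceil\log(\delta_i^{-1})\rceil$ and its sparsity is $\lesssim\sum_i d_{i+1}\,\delta_i^{-t_i/\beta_i}\log(\delta_i^{-1})$.

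The core estimate is the error recursion. Writing $\widehat\gamma_i$ for the $i$-th (clipped) block, one shows by induction that $e_i:=\|\widehat\gamma_i\circ\cdots\circ\widehat\gamma_1-\gamma_i\circ\cdots\circ\gamma_1\|_\infty$ obeys $e_i\le\delta_i+C_i\,e_{i-1}^{\min\{\beta_i,1\}}$: the modulus of continuity enters because $\gamma_{ij}\in\mathcal{F}_{\beta_i,B,t_i}$ is $\beta_i$-Hoelder for $\beta_i\le 1$ and Lipschitz (bounded first derivatives) for $\beta_i>1$, whence $\|\gamma_i(u)-\gamma_i(v)\|_\infty\le C_i\|u-v\|_\infty^{\min\{\beta_i,1\}}$ on $[0,1]^{d_i}$. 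Unrolling the recursion with subadditivity of $t\mapsto t^\theta$ for $\theta\le 1$ yields $e_r\lesssim\sum_{k=1}^r\delta_k^{\prod_{\ell=k+1}^r\min\{\beta_\ell,1\}}$. To force $e_r\le\epsilon$ I would therefore set $\delta_k\asymp\epsilon^{1/\prod_{\ell=k+1}^r\min\{\beta_\ell,1\}}$, so that $\delta_k^{-t_k/\beta_k}\asymp\epsilon^{-t_k/\beta_k^*}$ with $\beta_k^*=\beta_k\prod_{\ell=k+1}^r\min\{\beta_\ell,1\}$ and $\log(\delta_k^{-1})\asymp\log(\epsilon^{-1})$. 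Substituting into the depth, sparsity and resolution bounds, and using that $r$, $t$, $d$, $\beta$, $B$ are fixed so that a sum of $r$ terms $\epsilon^{-t_k/\beta_k^*}$ is of the order of its largest term, gives depth $\lesssim\log(\epsilon^{-1})$, sparsity $\lesssim\big(\max_k\epsilon^{-t_k/\beta_k^*}\big)\log(\epsilon^{-1})=\epsilon^{-\rho}\log(\epsilon^{-1})$ with $\rho=\max_k t_k/\beta_k^*$, and $c\lesssim\log(\epsilon^{-1})$; choosing the constants $c_1,\dots,c_4$ large enough and $\epsilon_0$ small enough that each $\delta_k$ stays below the threshold of Lemma \ref{Lemma Theorem 5} then yields the claim.

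I expect the main obstacle to be the composition bookkeeping rather than any isolated estimate: one must ensure that all intermediate quantities genuinely remain in the unit cubes on which the pre-built blocks are valid (the role of the clipping gadgets), that the gluing and padding keep every weight in a single $\mathcal{W}_c$ without producing weight-matrix products, and that these modifications inflate depth and sparsity only by the stated orders. This is in essence the composition-approximation argument of \cite{schmidt2020nonparametric}; the genuinely new points here are the explicit quantisation to $\mathcal{W}_c$ and the careful tracking of constants needed to recover the precise form $s_0(\epsilon)=c_2\epsilon^{-\rho}\log(\epsilon^{-1})$, where the finiteness of $r$ and of the $d_{i+1}$ is exactly what allows the $\max$ over $i$ (rather than a sum) to appear in the exponent.
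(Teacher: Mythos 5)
Your proposal is correct and follows essentially the same route as the paper: approximate each block $\gamma_{ij}$ via Lemma \ref{Lemma Theorem 5} with layerwise tolerances $\epsilon_i\asymp\epsilon^{1/\prod_{k>i}\min\{\beta_k,1\}}$, realise the coordinate selections $\iota_{ij}$ trivially, assemble by parallelization and concatenation (Lemma \ref{lemma para+concat}), and control the composed error by the Hoelder error-propagation bound (the paper cites Lemma 9 of \cite{schmidt2020nonparametric} where you re-derive the recursion by hand). Your explicit treatment of clipping the intermediate outputs back into $[0,1]^{d_{i+1}}$ is a detail the paper leaves implicit, but it does not change the argument.
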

    
    The following corollary establishes the corresponding convergence result. Theorem \ref{Theorem Lower Bound curse} provides the lower bound in the case where $t_i\leq \min\{d_1,\dots,d_i\}$. 
    
    \begin{corollar}\label{Corollar curse}
        Let $r_2\in\mathbb{N}$, $t\in\mathbb{N}^{r_2}$, $d\in\mathbb{N}^{r_2+1}$, $\beta_1\geq 0$, $\beta_2\in\mathbb{R}^{r_2}$, and $B_1,B_2>0$ with  $\beta_{2,i}>0$ for $i=1,\dots,r_2$, $d_{r_2+1}=1$. Additionally, $t_1<d_1$ and $t_i\leq d_i$ for $i\neq 1$. Define $d'\in\mathbb{N}^{r_2+1}$ with $d'_1=d_1-1$, $d'_i=d_i$ for $i\neq 1$ and 
        \[
            \rho:=\max_{i=1,\dots,r_2}\ \frac{t_i}{\beta_{2,i}^*}
        \] 
        Define $\kappa:=1+\beta_1$ and let $\mathfrak{Q}$ be a class of potential joint distributions $\mathbb{Q}$ of $(X,Y)$ such that the following conditions hold. 
        \begin{itemize}
            \item[(a)] There is a constant $M>1$ such that for all $\mathbb{Q}\in\mathfrak{Q}$ the marginal distribution of $\mathbb{Q}_X$ has a Lebesgue density bounded by $M$.
            \item[(b)] There are constants $r_1\in\mathbb{N}$ and $\epsilon_1,\epsilon_2>0$ such that for all $\mathbb{Q}\in\mathfrak{Q}$ the bayes rule satisfies $G_{\mathbb{Q}}^*\in \mathcal{K}_{\mathbb{Q},\beta_1,B_1,\epsilon_1,\epsilon_2,r_1,d_1}^\mathcal{F}$ with
            \[
                \mathcal{F}:=\mathcal{G}_{r_2,t,\beta_2,B_2,d'}.
            \]
            \item[(c)] We have
            \[
                d_{f_\mathbb{Q}}(G,G^*_\mathbb{Q})\geq c_1d^\kappa_\Delta(G,G^*_\mathbb{Q})
            \]
            for some constant $c_1>0$, all $G\in\mathcal{N}$ and all $\mathbb{Q}\in\mathfrak{Q}$, where
            \[
                \mathcal{N}:=\bigcup_{n\in\mathbb{N}}\mathcal{N}_{n,n,n}
            \]
            is the class of sets corresponding to any neural network.
        \end{itemize}
        Let
        \[
            \tau_n:=\frac{n^{\frac{1}{2\kappa+\rho-1}}}{\log^{\frac{2}{\rho}}(n)}.
        \]
        Then there exist constants $C_1',C_2'>0$ and $C_3'\in\mathbb{N}$  such that for all $p\geq 1$ we have
        \begin{align*}
            \underset{n\rightarrow\infty}{\lim\ \sup}\ & \underset{\mathbb{Q}\in\mathfrak{Q}}{\sup}\ \tau_n^{p\kappa}\ \mathbb{E}\big[d_{f_\mathbb{Q}}^p(\hat{G}_n,G^*_\mathbb{Q})\big]<\infty,\\
            \underset{n\rightarrow\infty}{\lim\ \sup}\ & \underset{\mathbb{Q}\in\mathfrak{Q}}{\sup}\ \tau_n^p\ \mathbb{E}\big[d_\Delta^p(\hat{G}_n,G^*_\mathbb{Q})\big]<\infty, 
        \end{align*}
        where
        \[
            \hat{G}_n:=\underset{G\in\mathcal{N}_n}{\mathrm{arg\ min}}\ R_n(G).
        \]
        with $\mathcal{N}_n=\mathcal{N}_{C_1'L_0(\tau_n^{-1}),C_2's_0(\tau_n^{-1}),C_3'c_0(\tau_n^{-1})}$.
    \end{corollar}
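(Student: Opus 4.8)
The plan is to obtain Corollary \ref{Corollar curse} as a direct specialization of Theorem \ref{Theorem Main}, with Lemma \ref{Lemma curse} supplying the approximation hypothesis on the class of boundary functions. Concretely, I would invoke Theorem \ref{Theorem Main} with the choices $\beta := \beta_1$, $B := B_1$, $d := d_1$, $r := r_1$, the class $\mathcal{F} := \mathcal{G}_{r_2,t,\beta_2,B_2,d'}$, and the exponent $\rho := \max_{i=1,\dots,r_2} t_i/\beta_{2,i}^*$.

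First I would check that $\mathcal{F}$ is an admissible choice, i.e.\ that it consists of functions $\gamma:[0,1]^{d_1-1}\to\mathbb{R}$, as demanded in Definition \ref{Definition Sets} and hence in Theorem \ref{Theorem Main}. This holds because $d'_1 = d_1-1$ and $d'_{r_2+1}=1$, while the assumption $t_1<d_1$ guarantees $t_1\le d'_1$, so $\mathcal{G}_{r_2,t,\beta_2,B_2,d'}$ is well defined and maps $[0,1]^{d_1-1}$ into $\mathbb{R}$. Next, the approximation requirement of Theorem \ref{Theorem Main}---the existence of $\epsilon_0,C_1,C_2>0$ and $C_3,C_4\in\mathbb{N}$ such that every $\gamma\in\mathcal{F}$ is approximated to accuracy $\epsilon\in(0,\epsilon_0)$ by a network with at most $C_1\lceil\log(\epsilon^{-1})\rceil$ layers, sparsity at most $C_2\epsilon^{-\rho}\log(\epsilon^{-1})$, and weights in $\mathcal{W}_c$ with $c=C_3+C_4\lceil\log(\epsilon^{-1})\rceil$---is precisely the statement of Lemma \ref{Lemma curse}, with the same $\rho$ and with $(\epsilon_0,C_1,C_2,C_3,C_4)$ identified with $(\epsilon_0,c_1,c_2,c_3,c_4)$ from that lemma.

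It then remains to observe that conditions (a), (b), (c) imposed in the corollary are literally conditions (a), (b), (c) of Theorem \ref{Theorem Main} after the above substitutions: (a) is unchanged; (b) reads $G_\mathbb{Q}^*\in\mathcal{K}_{\mathbb{Q},\beta_1,B_1,\epsilon_1,\epsilon_2,r_1,d_1}^{\mathcal{F}}$, which matches the theorem with $r:=r_1$; and (c) is the same noise condition on $\mathcal{N}=\bigcup_n\mathcal{N}_{n,n,n}$, with $\kappa=1+\beta_1$. Since $t_i\ge1$ and $\beta_{2,i}^*>0$ force $\rho>0$, and $\beta_1\ge0$, all hypotheses of Theorem \ref{Theorem Main} are met. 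Applying it with $\tau_n=n^{1/(2\kappa+\rho-1)}/\log^{2/\rho}(n)$ produces constants $C_1',C_2'>0$, $C_3'\in\mathbb{N}$ and the two claimed $\limsup$ bounds for $\hat G_n$, with $\mathcal{N}_n=\mathcal{N}_{C_1'L_0(\tau_n^{-1}),C_2's_0(\tau_n^{-1}),C_3'c_0(\tau_n^{-1})}$, where $L_0,s_0,c_0$ are the functions appearing in Lemma \ref{Lemma curse}.

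There is no substantial obstacle: the corollary is essentially a repackaging of Theorem \ref{Theorem Main} for the compositional class, and all the real work---the complexity/empirical-process argument behind Theorem \ref{Theorem Main} and the compositional approximation estimate behind Lemma \ref{Lemma curse}---is already done. The only points requiring care are bookkeeping: confirming that the effective input dimension of $\mathcal{F}$ is $d_1-1$ (hence the passage from $d$ to $d'$, and why $t_1<d_1$ rather than $t_1\le d_1$ is assumed), and that the sparsity exponent $\rho$ delivered by Lemma \ref{Lemma curse} is exactly the one that enters the rate $\tau_n$.
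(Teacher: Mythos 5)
Your proposal is correct and matches the paper's intended argument: the paper gives no separate proof of Corollary \ref{Corollar curse}, treating it as an immediate specialization of Theorem \ref{Theorem Main} in which Lemma \ref{Lemma curse} supplies the required network-approximation hypothesis for $\mathcal{F}=\mathcal{G}_{r_2,t,\beta_2,B_2,d'}$ with the stated $\rho$. Your bookkeeping on the shifted dimension vector $d'$ (and why $t_1<d_1$ is needed so that the boundary functions live on $[0,1]^{d_1-1}$) is exactly the only point requiring care.
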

    
    Note that Corollary \ref{Corollar curse} is a generalisation of Corollary \ref{Corollar main}. The rate now depends on $\rho$ which in turn depends on $t_1,\dots,t_{r_2}$ instead of the input dimension $d_1$. Clearly, the effective dimensions $t_i$ can be much smaller then the input dimension $d_1$, for example, when the boundary functions come from an additive function.
    
\section{Lower Bound}\label{Lower Bound}
    
    We now establish lower bounds on the convergence rates from corollaries \ref{Corollar main} and $\ref{Corollar curse}$. Note that the lower bounds also prove that the rates obtained in Theorem \ref{Theorem Main} can not be improved up to a $\mathrm{log}$-factor. Since Corollary $\ref{Corollar curse}$ is a generalisation of $\ref{Corollar main}$, we only have to prove a lower bound for the setting given in the former. For clarity, we provide both statements. The proofs of this section can be found in Appendix \ref{Appendix - Lower Bound}.
    
    \begin{theo}\label{Theorem Lower Bound}
         Let $\beta_1\geq 0$, $B_1,\beta_2,B_2,\rho>0$ and $d\in\mathbb{N}$ with $d\geq 2$.  Let $\mathfrak{Q}$ be the class of all potential joint distributions $\mathbb{Q}$ of $(X,Y)$ such that (a),(b) from Theorem \ref{Theorem Main} hold with $\rho:=\frac{d-1}{\beta_2}$, $\kappa:=1+\beta_1$, $\beta:=\beta_1$, $B:=B_1$, $\mathcal{F}:=\mathcal{F}_{\beta_2,B_2,d-1}$ and some $M> 1$, $r\in\mathbb{N}$, $\epsilon_1,\epsilon_2>0$. Let (c) hold with $c_1>0$ large enough and set
        \[
            \tau_n:=n^{\frac{1}{2\kappa+\rho-1}}.
        \]
        Then
        \begin{align*}
            \underset{n\rightarrow\infty}{\lim\ \inf}\ \underset{G_{n}\in\mathfrak{G}}{\inf}\ \underset{\mathbb{Q}\in\mathfrak{Q}}{\sup}\ \tau_n^p\mathbb{E}\left[d^p_\Delta(G_{n},G^*_\mathbb{Q})\right] & >0,\\
            \underset{n\rightarrow\infty}{\lim\ \inf}\ \underset{G_{n}\in\mathfrak{G}}{\inf}\ \underset{\mathbb{Q}\in\mathfrak{Q}}{\sup}\ \tau_n^{p\kappa}\mathbb{E}\left[d^p_{f_\mathbb{Q}}(G_{n},G^*_\mathbb{Q})\right] & >0
        \end{align*}
        for every $p\geq 0$, where $\mathfrak{G}$ contains all estimators depending on the data $(X_1,Y_1),\dots,(X_n,Y_n)$.
    \end{theo}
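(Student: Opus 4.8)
The plan is to prove the lower bound by the usual reduction to a multiple‑hypothesis testing problem over an Assouad hypercube, built by perturbing a simple Bayes set with small Hölder‑$\beta_2$ bumps, very much in the spirit of the lower bounds of \cite{mammen1999smooth, tsybakov2004optimal}. First I fix a base distribution $\mathbb{Q}_0$ whose Bayes rule $G^*_{\mathbb{Q}_0}$ contains a flat boundary fragment $\{x_d=a_0\}$ inside a box $D_0\in\mathcal{D}$, whose regression function satisfies $|2f_{\mathbb{Q}_0}(x)-1|\asymp|x_d-a_0|^{\beta_1}$ (cut off at a constant) on the $\epsilon_1$‑slab around that face, and whose marginal $\mathbb{Q}_X$ has a density that is constant ($\le M$) on a fixed neighbourhood of the face and vanishes elsewhere --- the regime in which the Tsybakov exponent is exactly $\kappa=1+\beta_1$. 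For a resolution $h=h_n\downarrow 0$ to be chosen, I partition a sub‑box of this face into $M_n\asymp h^{-(d-1)}$ cells of side $h$ and attach to the $k$‑th cell a bump $\phi_k=H\,q\big((\cdot-z_k)/h\big)$ of a fixed smooth profile $q$, with height $H\asymp h^{\beta_2}$ taken as large as the constraint $\|\sum_k\omega_k\phi_k\|_{\mathcal{C}^{\beta_2}}\le B_2$ allows. For $\omega\in\{0,1\}^{M_n}$ set $\gamma_\omega=a_0+\sum_k\omega_k\phi_k$, let $G^*_\omega$ be the corresponding boundary fragment (everything else equal to $G^*_{\mathbb{Q}_0}$), and let $f_{\mathbb{Q}_\omega}$ coincide with $f_{\mathbb{Q}_0}$ away from the cells and grow, near each active bump, like the (truncated) power $|x_d-\gamma_\omega(x_{-d})|^{\beta_1}$ with the sign dictated by $G^*_\omega$.

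The first block of work is to show $\mathbb{Q}_\omega\in\mathfrak{Q}$ for every $\omega$ and every $n$. Condition (a) is built in. For (b) one checks $\gamma_\omega\in\mathcal{F}_{\beta_2,B_2,d-1}$ (here $H\lesssim h^{\beta_2}$ and the disjointness of the bump supports are used) and then Definition \ref{Definition Sets} with the given $\epsilon_1,\epsilon_2$ and $r=1$; the only delicate point is clause 3, which holds because along every vertical line through a boundary point $|2f_{\mathbb{Q}_\omega}(y)-1|\le c\,|x_d-y_d|^{\beta_1}$, so one may take $g_{\nu,x}(t)=c\,t^{\beta_1}$ (truncated), which lies in $\mathcal{H}_{\beta_1,B_1}$ once $c\le B_1$ up to the usual constants. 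For (c) one argues that, for every $G\in\mathcal{N}$, the ratio $d_{f_{\mathbb{Q}_\omega}}(G,G^*_\omega)\big/d_\Delta(G,G^*_\omega)^{\kappa}$ is minimised by a thin ``coating'' of $\partial G^*_\omega$; since $|2f_{\mathbb{Q}_\omega}-1|\asymp(\text{vertical distance})^{\beta_1}$ there, a direct integration gives $d_{f_{\mathbb{Q}_\omega}}(G,G^*_\omega)\ge c_1 d_\Delta(G,G^*_\omega)^{\kappa}$, with $c_1$ controlled by the growth constant of $f_{\mathbb{Q}_\omega}$ and the value of the density near the boundary; tuning these makes $c_1$ as large as required (this balance is precisely what ``$c_1$ large enough'' encodes and what makes $\kappa=1+\beta_1$ the exact exponent).

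Next the quantitative estimates. Flipping a single bit $k$ changes $G^*_\omega$ only on the region between the two boundaries over the $k$‑th cell, of $\mathbb{Q}_X$‑measure $\asymp h^{d-1}H\asymp h^{d-1+\beta_2}$; integrating $|2f_{\mathbb{Q}_\omega}-1|\asymp(\text{dist})^{\beta_1}$ over that sliver gives per‑bit separation $\asymp h^{d-1}H^{1+\beta_1}=h^{d-1}H^{\kappa}\asymp h^{d-1+\beta_2\kappa}$ in $d_{f_{\mathbb{Q}_\omega}}$. Because the cells are disjoint, both $d_\Delta$ and $d_{f_{\mathbb{Q}_\omega}}$ between two hypotheses equal the Hamming distance times the respective per‑bit value. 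For $\omega,\omega'$ differing in one bit, $\mathbb{Q}_\omega$ and $\mathbb{Q}_{\omega'}$ have the same marginal and differ in the conditional only over the $k$‑th cell, where the two regression functions differ by at most the local fluctuation of a truncated power of the vertical distance; a Bernoulli‑KL estimate then yields $\mathrm{KL}(\mathbb{Q}_\omega\|\mathbb{Q}_{\omega'})\lesssim h^{d-1}\,\Psi(H)$, where $\Psi(H)$ is the squared $L^2$‑size of that fluctuation, and the construction is arranged so that $\Psi(H)\lesssim H^{2\kappa-1}\asymp h^{\beta_2(2\kappa-1)}$. Choosing
\[
    h=h_n\asymp n^{-1/(d-1+\beta_2(2\kappa-1))}
\]
makes $n\,\mathrm{KL}(\mathbb{Q}_\omega\|\mathbb{Q}_{\omega'})\lesssim 1$, while $M_n$ times the per‑bit separations are $\asymp h^{\beta_2}$ and $\asymp h^{\beta_2\kappa}$; since $\rho=(d-1)/\beta_2$ one has $h^{-\beta_2}\asymp n^{1/(2\kappa+\rho-1)}=\tau_n$.

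With these two ingredients, Assouad's lemma in its $\ell_1$/$L^p$ form (e.g.\ Tsybakov's book): given any estimator $G_n$, project it to the nearest $G^*_{\hat\omega}$; the KL bound forces $\mathbb{E}_\omega[\rho_H(\hat\omega,\omega)]\gtrsim M_n$ uniformly, and since $d_\Delta(G_n,G^*_\omega)\gtrsim \rho_H(\hat\omega,\omega)\cdot h^{d-1+\beta_2}$ one gets, by Jensen for $p\ge 1$ and by $\rho_H^p\ge M_n^{p-1}\rho_H$ for $0<p\le 1$, that $\sup_\omega\mathbb{E}_\omega[d_\Delta^p(G_n,G^*_\omega)]\gtrsim (M_n h^{d-1+\beta_2})^p\asymp \tau_n^{-p}$; the same argument with the $d_{f_{\mathbb{Q}_\omega}}$ separation gives $\sup_\omega\mathbb{E}_\omega[d_{f_{\mathbb{Q}_\omega}}^p(G_n,G^*_\omega)]\gtrsim (M_n h^{d-1+\beta_2\kappa})^p\asymp \tau_n^{-\kappa p}$, and the case $p=0$ is trivial. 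Taking $\liminf_n$ and using $\{\mathbb{Q}_\omega\}\subseteq\mathfrak{Q}$ gives the claim. The main obstacle is the estimate $\Psi(H)\lesssim H^{2\kappa-1}$ on the per‑observation Kullback--Leibler divergence: one must design $f_{\mathbb{Q}_\omega}$ in the slab around each bump so that the fluctuation between neighbouring hypotheses is this small \emph{and} clauses (b), (c) still hold --- this is where the interplay between the bump height $\asymp\tau_n^{-\beta_2}$, the cut‑off scale $\epsilon_1$, the density, and the exponent $\beta_1$ must be handled with care, and it is the technical heart of the proof.
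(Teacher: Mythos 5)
Your proposal is correct and follows essentially the same route as the paper's proof: an Assouad hypercube of Hölder-$\beta_2$ bumps of height $\asymp K^{-\beta_2}$ on a $(d-1)$-dimensional grid of mesh $1/K$, a regression function growing like the $\beta_1$-th power of the signed distance to the perturbed boundary (which yields exactly $\kappa=1+\beta_1$ and makes clauses (b.3) and (c) checkable), a per-observation divergence of order $K^{-(d-1)-\beta_2(2\kappa-1)}$ between neighbouring hypotheses, and the resolution choice $K\asymp n^{1/(\beta_2(2\kappa-1+\rho))}$ giving total separations $\tau_n^{-1}$ and $\tau_n^{-\kappa}$. The only cosmetic difference is that you bound the testing affinity via Kullback--Leibler while the paper uses the Hellinger affinity $\bigl(\int\sqrt{\mathrm{d}\mathbb{Q}_0\,\mathrm{d}\mathbb{Q}_1}\bigr)^{2n}$, reduced in both cases to $\int(f_{w^0}-f_{w^1})^2\asymp K^{-(d-1)}H^{2\kappa-1}$; this is immaterial.
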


    Intuitively, $B_1$ bounds the factor of the $x^{\beta_2}$-term of $f_\mathbb{Q}$ close to the boundary from above. On the other hand, $c_1$ bounds this term from below. Thus, not every combination of $B_1,c_1>0$ is possible. We prove Theorem \ref{Theorem Lower Bound} for large $c_1>0$. We do not provide the exact ratio of $B$ and $c_1$ required since it is not important for the statement. Lastly, the lower bound corresponding to Corollary \ref{Corollar curse} is given.

    \begin{theo}\label{Theorem Lower Bound curse}
        Let $r_2\in\mathbb{N}$, $t\in\mathbb{N}^{r_2}$, $d\in\mathbb{N}^{r_2+1}$, $\beta_1\geq 0$, $\beta_2\in\mathbb{R}^{r_2}$, and $B_1,B_2>0$ with  $\beta_{2,i}>0$ for $i=1,\dots,r_2$, $d_{r_2+1}=1$. Additionally, $t_1<d_1$ and $t_i\leq \min\{d_1,\dots,d_i\}$ for $i\neq 1$. Let
        \[
            \rho:=\max_{i=1,\dots,r_2}\ \frac{t_i}{\beta_{2,i}^*}
        \] 
        Define $\kappa:=1+\beta_1$ and let $\mathfrak{Q}$ be the class of all potential joint distributions $\mathbb{Q}$ of $(X,Y)$ such that (a),(b) from Corollary \ref{Corollar curse} hold for some $M> 1$, $r\in\mathbb{N}$, $\epsilon_1,\epsilon_2>0$. Let (c) hold with $c_1>0$ large enough and set
        \[
            \tau_n:=n^{\frac{1}{2\kappa+\rho-1}}.
        \]
        Then
        \begin{align*}
            \underset{n\rightarrow\infty}{\lim\ \inf}\ \underset{G_{n}\in\mathfrak{G}}{\inf}\ \underset{\mathbb{Q}\in\mathfrak{Q}}{\sup}\ \tau_n^p\mathbb{E}\left[d^p_\Delta(G_{n},G^*_\mathbb{Q})\right] & >0,\\
            \underset{n\rightarrow\infty}{\lim\ \inf}\ \underset{G_{n}\in\mathfrak{G}}{\inf}\ \underset{\mathbb{Q}\in\mathfrak{Q}}{\sup}\ \tau_n^{p\kappa}\mathbb{E}\left[d^p_{f_\mathbb{Q}}(G_{n},G^*_\mathbb{Q})\right] & >0
        \end{align*}
        for every $p\geq 0$, where $\mathfrak{G}$ contains all estimators depending on the data $(X_1,Y_1),\dots,(X_n,Y_n)$.
    \end{theo}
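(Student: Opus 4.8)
The plan is a cube (Assouad-type) minimax lower bound in the spirit of Mammen--Tsybakov \cite{mammen1999smooth} and Tsybakov \cite{tsybakov2004optimal}. Since lower-bounding the minimax risk over any sub-family of $\mathfrak{Q}$ already suffices, I would fix an index $i^\star\in\{1,\dots,r_2\}$ attaining $\rho=t_{i^\star}/\beta_{2,i^\star}^{*}$ and construct a finite family $\{\mathbb{Q}_\omega:\omega\in\{0,1\}^m\}\subseteq\mathfrak{Q}$, indexed by a hypercube, whose Bayes rules agree outside $m\asymp h^{-t_{i^\star}}$ disjoint small regions and differ in the $\ell$-th region according to $\omega_\ell$, with $h=h_n\to 0$ a calibration scale. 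Concretely, start from a flat reference boundary $\{x_{j_0}=\mathrm{const}\}$ inside a box $D\subseteq[0,1]^{d_1}$, written as the graph of some $\gamma^0=\gamma_{r_2}\circ\dots\circ\gamma_1\in\mathcal{G}_{r_2,t,\beta_2,B_2,d'}$ whose components $\gamma_k$, $k\neq i^\star$, are fixed once and for all --- those with $\beta_{2,k}\ge1$ carrying one coordinate through linearly, those with $\beta_{2,k}<1$ acting like $z\mapsto|z|^{\beta_{2,k}}$ in the relevant coordinate --- and then perturb only $\gamma_{i^\star}$ by placing, on $m$ disjoint $t_{i^\star}$-cubes of side $h$ in its domain, a smooth bump of width $h$ and height calibrated so that, after composition with the fixed outer maps (which multiplies the bump-height exponent by $\prod_{k>i^\star}\min\{\beta_{2,k},1\}$), the boundary of $G^*_{\mathbb{Q}_\omega}$ carries $m$ disjoint bumps of width $\asymp h$ and height $\asymp h^{\beta_{2,i^\star}^{*}}$, still inside $\mathcal{G}_{r_2,t,\beta_2,B_2,d'}$. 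The hypothesis $t_i\le\min\{d_1,\dots,d_i\}$ is exactly what leaves enough coordinates to route the perturbed one through every projection $\iota_{ij}$.

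I would then attach the noise. Take $\mathbb{Q}_{\omega,X}$ uniform on $[0,1]^{d_1}$, so (a) holds, and define $f_{\mathbb{Q}_\omega}$ so that $2f_{\mathbb{Q}_\omega}-1$ follows, near $\partial G^*_{\mathbb{Q}_\omega}$, a fixed profile $\asymp\mathrm{sgn}(\cdot)\,|\,\mathrm{dist}(\cdot,\partial G^*_{\mathbb{Q}_\omega})\,|^{\beta_1}$ (a constant multiple of a sign if $\beta_1=0$) that saturates at a fixed level away from the boundary. One then checks $G^*_{\mathbb{Q}_\omega}\in\mathcal{K}^{\mathcal{G}_{r_2,t,\beta_2,B_2,d'}}_{\mathbb{Q}_\omega,\beta_1,B_1,\epsilon_1,\epsilon_2,r_1,d_1}$ by reading the boxes $D_\nu$, the axis data $(j_\nu,\iota_\nu)$ and the boundary functions $\gamma_\nu$ off the construction, with the profile lying in $\mathcal{H}_{\beta_1,B_1}$ because its amplitude is chosen small enough; simultaneously its amplitude is chosen large enough that the Tsybakov tail, hence condition (c) with the prescribed $c_1$, holds. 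Non-emptiness of this window is precisely the ``$c_1$ large enough'' proviso and, as the remark after Theorem \ref{Theorem Lower Bound} notes, constrains the admissible ratio of $B_1$ and $c_1$. Thus $\mathbb{Q}_\omega\in\mathfrak{Q}$ for all $\omega$.

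The third step is the metric-entropy balance and Assouad. For a single bump, let $A$ be the $\mathbb{Q}_X$-measure of the region on which $G^*_{\mathbb{Q}_\omega}$ changes when $\omega_\ell$ is flipped, $A'$ the integral of $|2f-1|$ over it, and $K$ the integral of $(2f-1)^2$ over the region on which $f_{\mathbb{Q}_\omega}$ changes (the Bernoulli parameters stay near $\tfrac12$, so KL is comparable to the squared difference). One verifies that a suitable matching of bump width and amplitude makes $m A\asymp\tau_n^{-1}$, $m A'\asymp\tau_n^{-\kappa}$ and $n K\asymp1$. The expectation form of Assouad's lemma then gives $\max_\omega\mathbb{E}_{\mathbb{Q}_\omega}[\varrho_{\mathrm H}(\hat\omega,\omega)]\gtrsim m$, where $\hat\omega$ is the cube-by-cube rounding of an arbitrary estimator $\hat{G}_n$ and $\varrho_{\mathrm H}$ the Hamming distance; since $\varrho_{\mathrm H}\le m$ this forces, for some $\omega$, $\mathbb{P}_{\mathbb{Q}_\omega}(\varrho_{\mathrm H}(\hat\omega,\omega)\ge c\,m)\ge c'>0$, and on that event $d_\Delta(\hat{G}_n,G^*_{\mathbb{Q}_\omega})\gtrsim m A$ and $d_{f_{\mathbb{Q}_\omega}}(\hat{G}_n,G^*_{\mathbb{Q}_\omega})\gtrsim m A'$. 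As $\{\mathbb{Q}_\omega\}\subseteq\mathfrak{Q}$ and $\hat{G}_n$ was arbitrary, this yields $\inf_{G_n\in\mathfrak{G}}\sup_{\mathbb{Q}\in\mathfrak{Q}}\mathbb{E}[d_\Delta^p]\gtrsim\tau_n^{-p}$ and $\inf_{G_n\in\mathfrak{G}}\sup_{\mathbb{Q}\in\mathfrak{Q}}\mathbb{E}[d_{f_\mathbb{Q}}^p]\gtrsim\tau_n^{-p\kappa}$ for every $p>0$ (with $p=0$ trivial), which are the two displayed inequalities.

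I expect the main obstacle to be the construction of the first two steps: producing honest members of $\mathcal{G}_{r_2,t,\beta_2,B_2,d'}$ whose graphs carry the required bumps with height scaling by the \emph{effective} exponent $\beta_{2,i^\star}^{*}$ (the curse-circumventing point, where $t_i\le\min\{d_1,\dots,d_i\}$ is used), fitting the rigid $\mathcal{K}$-template with a perpendicular noise profile in $\mathcal{H}_{\beta_1,B_1}$, while leaving the profile's amplitude free to meet (c) with the prescribed $c_1$ and the bump geometry free to realize the entropy balance $mA\asymp\tau_n^{-1}$, $nK\asymp1$. This is where the ``correct noise exponent'' condition baked into Definition \ref{Definition Sets} does its work: it pins $f_\mathbb{Q}$ near the boundary between a $c_1$-multiple and a $B_1$-multiple of $\mathrm{dist}^{\beta_1}$, which is exactly what makes the lower bound match the rate of Theorem \ref{Theorem Main}; verifying the Hölder norms, the disjointness and measure of the changed regions, and the Tsybakov tail are then routine but computation-heavy.
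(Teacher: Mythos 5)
Your proposal matches the paper's proof in all essentials: an Assouad hypercube of distributions obtained by perturbing only the component $\gamma_{i^\star}$ at the index attaining $\rho$, with bumps of width $K^{-1}$ on $t_{i^\star}$-dimensional cubes whose heights scale with the effective exponent $\beta_{2,i^\star}^{*}$ after composition with fixed outer maps, a uniform marginal, a perpendicular $\mathrm{dist}^{\beta_1}$ noise profile pinned between the $\mathcal{H}_{\beta_1,B_1}$ bound and the Tsybakov condition (the ``$c_1$ large enough'' window), and the standard affinity/Hellinger calibration $nK^{-\beta_{2,i^\star}^{*}(2\kappa-1+\rho)}\asymp 1$. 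The only (immaterial) difference is that you invoke Assouad in its high-probability form while the paper uses the averaged-risk/testing-affinity form and reduces to $p=1$ via Hölder and condition (c).
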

    
\section{Concluding Remarks}\label{Concluding Remarks}

    We establish optimal convergence rates up to a $\mathrm{log}$-factor in a classification setting under the \eqref{Characteristic Condition} using neural networks. Theorem \ref{Theorem Main} can be applied for many different boundary functions. The complexity of the class of boundary functions $\mathcal{F}$ is one of the main driving factors of the convergence rate. In particular, many approaches which circumvent the curse of dimensionality in a regression setting can be used to circumvent the curse in this classification setting.\newline
    Note that this paper is of a theoretical nature. While sparsity constraints are considered thoroughly in the theoretical literature, they are not widely used in practice. Additionally, we did not discuss the minimization required for the calculation of $\hat{G}_n$. This is a very interesting but complicated topic which is not in the scope of this article. Observe that the class of neural networks used in Theorem \ref{Theorem Main} depends on $\kappa$ as well as $\rho$. We believe that one can extend the results of this paper by either having adaptive classes of neural networks or a class independent of $\kappa$ and $\rho$ in a similar manner to \cite{tsybakov2004optimal}. One obstacle to overcome is the fact that the conditions on the probability distribution $\mathfrak{Q}$ required are not strictly weaker for larger $\kappa$ and $\rho$.\newline
    Lastly, while the goal of this paper is to prove results considering neural networks, it also contains new insights on the noise condition \eqref{Characteristic Condition}. In order to establish optimal convergence, an additional condition in order to show approximation results of neural networks with respect to the metric $d_{f_\mathbb{Q}}$ is necessary. Intuitively, the reverse inequality is required for certain sets. Note that requiring the reverse inequality is an overly restrictive assumption which holds for almost no classes of possible distributions $\mathfrak{Q}$ for $\kappa\neq 1$. This proved to be a major challenge and is solved by (3.) in Definition \ref{Definition Sets}. While this condition is always also satisfied for larger but not lower $\beta$ (and thus $\kappa$), the reverse is true in condition \eqref{Characteristic Condition}. Thus, together the requirement is that $\kappa$ is the "correct rate". Note that condition (3.) still allows for highly non-continuous $f_\mathbb{Q}$ close to the boundary of $G^*_\mathbb{Q}$. This is essential, since considering only smooth $f_\mathbb{Q}$ close to the boundary leads to different convergence rates as shown in Theorem 2 of \cite{kim2021fast}.

\appendix

\section{General Convergence Results}\label{Appendix - General Convergence Results}

    The proof of Proposition \ref{Theorem Mammen} is similar to the proof of Theorem 2 in \cite{mammen1999smooth}. For the sake of completion, we provide the entire argumentation here anyway. 
    \begin{proof}[Proof of Proposition \ref{Theorem Mammen}]
        Let $n\geq N_0$. Without loss of generality, we may assume that $\tau_n\leq n^{\frac{1}{\rho+2\kappa-1}}$, since otherwise the conditions are also satisfied when using  $\bar{\tau}_n=n^{\frac{1}{\rho+2\kappa-1}}$. We begin by proving the assertion for the first term. The idea is to bound 
        \[
            \mathbb{P}\big(d_{f_\mathbb{Q}}(\hat{G}_n,G^*_\mathbb{Q})>t\tau_n^{-\kappa}\big)
        \] 
        for some $t>0$. First, observe that for any $G\in\mathcal{N}_n$
        \begin{align*}
            & R_n(G)-R_n(G^*_\mathbb{Q})-d_{f_\mathbb{Q}}(G,G^*_\mathbb{Q})\\
            & =\frac{1}{n}\sum_{i=1}^n\big(Y_i-1(X_i\in G)\big)^2-\frac{1}{n}\sum_{i=1}^n\big(Y_i-1(X_i\in G^*_\mathbb{Q})\big)^2\\
            & \ \ \ -\bigg(\mathbb{E}\Big[\big(Y-1(X\in G)\big)^2\Big]-\mathbb{E}\Big[\big(Y-1(X\in G_\mathbb{Q}^*)\big)^2\Big]\bigg)\\
            & =\frac{1}{n}\sum_{i=1}^nh_G(X_i,Y_i)-\mathbb{E}\big[h_G(X_i,Y_i)\big]=:\frac{1}{n}\sum_{i=1}^nU_i(G)
        \end{align*}
        holds, where
        \begin{align*}
            h_G:\mathbb{R}^d\times\{0,1\}\rightarrow\mathbb{R},\ h_G(x,y)=\big(y-1(x\in G)\big)^2-\big(y-1(x\in G^*_\mathbb{Q})\big)^2.
        \end{align*}
        Regarding (iii), for every $n\in\mathbb{N}$ there exists a $G_n\in\mathcal{N}_n$ such that
        \[
            d_{f_\mathbb{Q}}(G_n,G^*_\mathbb{Q})\leq c_2\tau_n^{-\kappa}.
        \]
        For $t>0$, define
        \[
            \Xi_t:=\Big\{G\in\mathcal{N}_n\ \Big|\ d_{f_\mathbb{Q}}(G,G^*_\mathbb{Q})\geq t\tau_n^{-\kappa}\Big\}. 
        \]
        Then, for $t\geq 4c_2$ and $G\in\Xi_t$ we have
        \begin{align}
            \frac{1}{2}d_{f_\mathbb{Q}}(G,G^*_\mathbb{Q})-d_{f_\mathbb{Q}}(G_n,G^*_\mathbb{Q})\geq c_2\tau_n^{-\kappa}.\label{equation1}
        \end{align}
        Recall that by definition $\hat{G}_n$ minimizes $R_n(\cdot)$. Therefore, in view of the calculations above, for $t\geq4c_2$
        \begin{align*}
            & \mathbb{P}\big(d_{f_\mathbb{Q}}(\hat{G}_n,G^*_\mathbb{Q})>t\tau_n^{-\kappa}\big)\\ 
            &\leq \mathbb{P}\big(\exists G\in\Xi_t:\ R_n(G)-R_n(G_n)\leq 0\big)\\
            &=\mathbb{P}\Big(\exists G\in\Xi_t:\ R_n(G)-R_n(G^*_\mathbb{Q})-\big(R_n(G_n)-R_n(G^*_\mathbb{Q}\big)\leq 0\Big)\\
            & =\mathbb{P}\Bigg(\exists G\in\Xi_t:\ d_{f_\mathbb{Q}}(G,G^*_\mathbb{Q})+\frac{1}{n}\sum_{i=1}^nU_i(G)\\
            & \ \ \ \ \ \ \ \ \ \ \ \ \ \ \ \ \ \ \ \ \ \ \ \ -d_{f_\mathbb{Q}}(G_n,G^*_\mathbb{Q})-\frac{1}{n}\sum_{i=1}^nU_i(G_n)\leq 0\Bigg)
        \end{align*}
        holds. Using inequality (\ref{equation1}) in the third row yields  
        \begin{align*}
            & \mathbb{P}\Bigg(\exists G\in\Xi_t:\ d_{f_\mathbb{Q}}(G,G^*_\mathbb{Q})+\frac{1}{n}\sum_{i=1}^nU_i(G)\\
            & \ \ \ \ \ \ \ \ \ \ \ \ \ \ \ \ \ \ \ \ \ \ \ \ -d_{f_\mathbb{Q}}(G_n,G^*_\mathbb{Q})-\frac{1}{n}\sum_{i=1}^nU_i(G_n)\leq 0\Bigg)\\
            & =\mathbb{P}\Bigg(\exists G\in\Xi_t:\ \bigg(\frac{1}{2}d_{f_\mathbb{Q}}(G,G^*_\mathbb{Q})+\frac{1}{n}\sum_{i=1}^nU_i(G)\bigg)\\
            & \ \ \ \ \ \ \ \ \ \ \ \ \ \ \ \ \ \ \ \ \ \ \ \  +\bigg(\frac{1}{2}d_{f_\mathbb{Q}}(G,G^*_\mathbb{Q})-d_{f_\mathbb{Q}}(G_n,G^*_\mathbb{Q})-\frac{1}{n}\sum_{i=1}^nU_i(G_n)\bigg)\leq 0\Bigg)\\
            & \leq \mathbb{P}\Bigg(\exists G\in\Xi_t:\ \frac{1}{2}d_{f_\mathbb{Q}}(G,G^*_\mathbb{Q})+\frac{1}{n}\sum_{i=1}^nU_i(G)\leq 0\Bigg)\\
            & \ \ \ \ \ \ \ \ \ \ \ \ \ \ \ \ \ \ \ \ \ \ \ \ + \mathbb{P}\Bigg(c_2\tau_n^{-\kappa}-\frac{1}{n}\sum_{i=1}^nU_i(G_n)\leq 0\Bigg)\\
            & \leq \mathbb{P}\Bigg(\exists G\in\Xi_t:\ \frac{1}{n}\sum_{i=1}^nU_i(G)\leq -\frac{1}{2}d_{f_\mathbb{Q}}(G,G^*_\mathbb{Q})\Bigg)\\
            & \ \ \ \ \ \ \ \ \ \ \ \ \ \ \ \ \ \ \ \ \ \ \ \ + \mathbb{P}\Bigg(c_2\tau_n^{-\kappa}\leq \frac{1}{n}\sum_{i=1}^nU_i(G_n)\Bigg)
        \end{align*}
        and thus
        \begin{align*}
            & \mathbb{P}\big(d_{f_\mathbb{Q}}(\hat{G}_n,G^*_\mathbb{Q})>t\tau_n^{-\kappa}\big)\\
            & \leq\mathbb{P}\Bigg(\exists G\in\Xi_t:\ \frac{1}{n}\sum_{i=1}^nU_i(G)\leq -\frac{1}{2}d_{f_\mathbb{Q}}(G,G^*_\mathbb{Q})\Bigg)\\
            & \ \ \ \ \ \ \ \ \ \ \ \ \ \ \ \ \ \ \ \ \ \ \ \ + \mathbb{P}\Bigg(c_2\tau_n^{-\kappa}\leq \frac{1}{n}\sum_{i=1}^nU_i(G_n)\Bigg).
        \end{align*}
        It remains to find upper bounds for the two terms above. In order to bound the first term, 
        note that for $(x,y)\in\mathbb{R}^d\times\{0,1\}$ and any $G\in\mathcal{N}_n$ we have
        \begin{align*}
            |h_G(x,y)|
            & =\begin{cases}
                \big|1-1(x\in G)-\big(1-1(x\in G_\mathbb{Q}^*)\big)\big|, \ & \text{for}\ y=1,\\
                \big|1(x\in G)-1(x\in G_\mathbb{Q}^*)\big|,\ & \text{for}\ y=0
            \end{cases}\\
            & =1\big(x\in G\Delta G^*_\mathbb{Q}\big).
        \end{align*}
        For all $i=1,\dots, n$ this implies $|U_i(G)|\leq 2$ and 
        \begin{align*}
            \mathbb{E}\big[U_i(G)^2\big] & \leq\mathbb{E}\big[h_G(X_i,Y_i)^2\big] = \mathbb{E}\big[1\big(x\in G\Delta G_\mathbb{Q}^*\big)\big]\\
            & =d_\Delta(G,G^*_\mathbb{Q})\leq c^{-\frac{1}{\kappa}}_1d_{f_\mathbb{Q}}(G,G^*_\mathbb{Q})^\frac{1}{\kappa}
        \end{align*}
        where the last inequality follows from (ii). By Bernstein's inequality, for all $a>0$
        \[
            \mathbb{P}\Bigg(\bigg|\frac{1}{n}\sum_{i=1}^nU_i(G)\bigg|\geq a\Bigg)\leq 2\exp\bigg(-\frac{k_1na^2}{a+c_1^{-\frac{1}{\kappa}}d_{f_\mathbb{Q}}(G,G^*_\mathbb{Q})^\frac{1}{\kappa}}\bigg)
        \]
        holds, where $k_1>0$ is a constant. By setting $a=\frac{1}{2}d_{f_\mathbb{Q}}(G,G^*_\mathbb{Q})$ and observing that $d_{f_\mathbb{Q}}(G,G^*_\mathbb{Q})\leq 1$, we have
        \[
             \mathbb{P}\Bigg(\bigg|\frac{1}{n}\sum_{i=1}^nU_i(G)\bigg|\geq \frac{1}{2}d_{f_\mathbb{Q}}(G,G^*_\mathbb{Q})\Bigg)\leq 2\exp\Big(-k_2nd_{f,g}(G,G^*_\mathbb{Q})^{\frac{2\kappa-1}{\kappa}}\Big)
        \]
        for some constant $k_2>0$. Noting that by definition $\tau_n\leq n^{\frac{1}{\rho+2\kappa-1}}$ and $\kappa\geq1$, by (iv) we have 
        \begin{align*}
            & \mathbb{P}\Bigg(\exists G\in\Xi_t:\ \bigg|\frac{1}{n}\sum_{i=1}^nU_i(G)\bigg|\geq\frac{1}{2}d_{f_\mathbb{Q}}(G,G^*_\mathbb{Q})\Bigg)\\
            & \ \ \ \leq 2\exp\big(c_3n^{\frac{\rho}{\rho+2\kappa-1}}\big)\exp\big(-k_2nt^{\frac{2\kappa-1}{\kappa}}\tau_n^{1-2\kappa}\big)\\
            & \ \ \ \leq 2\exp\big(c_3n^{\frac{\rho}{\rho+2\kappa-1}}\big)\exp\big(-k_2t^{\frac{2\kappa-1}{\kappa}}n^{\frac{1-2\kappa}{\rho+2\kappa-1}+1}\big)\\
            & \ \ \ \leq 2\exp\Big(\big(c_3-k_2t^{\frac{2\kappa-1}{\kappa}}\big)n^{\frac{\rho}{\rho+2\kappa-1}}\Big)\\
            & \ \ \ \leq 2\exp\Big(-c_3\tau_n^\rho\Big)
        \end{align*}
        for all $t\geq \Big(\frac{2c_3}{k_2}\Big)^{\frac{\kappa}{2\kappa-1}}$. To bound the second term we use Bernstein's inequality with $a=c_2\tau_n^{-\kappa}$ and receive
        \begin{align*}
            \mathbb{P}\Bigg(c_2\tau_n^{-\kappa}\leq \frac{1}{n}\sum_{i=1}^nU_i(G_n)\Bigg) & \leq \exp\bigg(-\frac{k_1nc_2^2\tau_n^{-2\kappa}}{c_2\tau_n^{-\kappa}+c_1^{-\frac{1}{\kappa}}d_{f_\mathbb{Q}}(G^*_\mathbb{Q},G_n)^{\frac{1}{\kappa}}}\bigg)\\
            & \leq \exp\big(-k_3n\tau_n^{-2\kappa+1}\big)\\
            &\leq\exp\big(-k_3\tau_n^\rho\big)
        \end{align*}
        for some constant $k_3>0$. Therefore, for $t\geq\max\Big\{4c_2,\Big(\frac{2c_3}{k_2}\Big)^{\frac{\kappa}{2\kappa-1}}\Big\}$ we find an upper bound
        \begin{align*}
            & \mathbb{P}\big(d_{f_\mathbb{Q}}(\hat{G}_n,G^*_\mathbb{Q})>t\tau_n^{-\kappa}\big)\\ 
            &\leq \mathbb{P}\Bigg(\exists G\in\Xi_t:\ \frac{1}{n}\sum_{i=1}^nU_i(G)\leq -\frac{1}{2}d_{f_\mathbb{Q}}(G,G^*_\mathbb{Q})\Bigg)\\
            & \ \ \ \ \ \ \ \ \ \ \ \ \ \ \ \ \ \ \ \ \ \ \ \ + \mathbb{P}\Bigg(c_2\tau_n^{-\kappa}\leq \frac{1}{n}\sum_{i=1}^nU_i(G_n)\Bigg)\\
            &\leq 2\exp\Big(-c_3\tau_n^\rho\Big)+ \exp\big(-k_3\tau_n^\rho\big).
        \end{align*}
        Observing that $d_{f_\mathbb{Q}}(\hat{G}_n,G^*_\mathbb{Q})\leq 1$ we conclude
        \begin{align*}
            & \mathbb{E}\big[d_{f_\mathbb{Q}}^p(\hat{G}_n,G^*_\mathbb{Q})\big]\\
            & \leq \mathbb{E}\big[1\big(d_{f_\mathbb{Q}}(\hat{G}_n,G^*_\mathbb{Q})>t\tau_n^{-\kappa}\big)\big]+t\tau_n^{-p\kappa}\mathbb{E}\big[1\big(d_{f_\mathbb{Q}}(\hat{G}_n,G^*_\mathbb{Q})\leq t\tau_n^{-\kappa}\big)\big]\\
            & \leq 2\exp\big(-c_3\tau_n^\rho\big)+ \exp\big(-k_3\tau_n^\rho\big)+ t\tau_n^{-\kappa p}
        \end{align*}
        and thus
        \begin{align*}
            & \underset{n\rightarrow\infty}{\lim\ \sup}\ \underset{\mathbb{Q}\in\mathfrak{Q}}{\sup}\ \tau_n^{\kappa p}\mathbb{E}\big[d_{f_\mathbb{Q}}^p(\hat{G}_n,G^*_\mathbb{Q})\big]\\
            & \leq \underset{n\rightarrow\infty}{\lim\ \sup}\ \tau_n^{\kappa p}\Big(2\exp\big(-c_3\tau_n^\rho\big)+ \exp\big(-k_3\tau_n^\rho\big)+ t\tau_n^{-\kappa p}\Big)\\
            & <\infty.
        \end{align*}
        Proving that the second term in the assertion is finite follows directly, since regarding (ii) for all $\mathbb{Q}\in\mathfrak{Q}$ and sets $G\in\mathcal{N}_n$ it holds hat 
        \[
            d_{f_\mathbb{Q}}(G,G^*_\mathbb{Q})\geq c_1d^\kappa_\Delta(G,G^*_\mathbb{Q}).
        \]
    \end{proof}

    \begin{proof}[Proof of Proposition \ref{Theorem Mammen 2}]
        Let $n\geq N_0$. Without loss of generality, we may assume that $\tau_n\leq n^{\frac{1}{\rho+2\kappa-1}}$, since otherwise the conditions are also satisfied when using $\tau_n=n^{\frac{1}{\rho+2\kappa-1}}$. The idea is to bound 
        \[
            \mathbb{P}\big(d_{f_\mathbb{Q}}(\hat{G}_n,G^*_\mathbb{Q})>t\tau_n^{-1}\big)
        \] 
        for some $t>0$. First, observe that for any $G\in\mathcal{N}_n$
        \begin{align*}
            & R_n(G)-R_n(G^*_\mathbb{Q})-d_{f_\mathbb{Q}}(G,G^*_\mathbb{Q})\\
            & =\frac{1}{n}\sum_{i=1}^n\big(Y_i-1(X_i\in G)\big)^2-\frac{1}{n}\sum_{i=1}^n\big(Y_i-1(X_i\in G^*_\mathbb{Q})\big)^2\\
            & \ \ \ -\bigg(\mathbb{E}\Big[\big(Y-1(X\in G)\big)^2\Big]-\mathbb{E}\Big[\big(Y-1(X\in G_\mathbb{Q}^*)\big)^2\Big]\bigg)\\
            & =\frac{1}{n}\sum_{i=1}^nh_G(X_i,Y_i)-\mathbb{E}\big[h_G(X_i,Y_i)\big]=:\frac{1}{n}\sum_{i=1}^nU_i(G)
        \end{align*}
        holds, where
        \begin{align*}
            h_G:\mathbb{R}^d\times\{0,1\}\rightarrow\mathbb{R},\ h_G(x,y)=\big(y-1(x\in G)\big)^2-\big(y-1(x\in G^*_\mathbb{Q})\big)^2.
        \end{align*}
        Regarding (ii), for every $n\in\mathbb{N}$ there exists a $G_n\in\mathcal{N}_n$ such that
        \[
            d_{f_\mathbb{Q}}(G_n,G^*_\mathbb{Q})\leq c_2\tau_n^{-1}.
        \]
        For $t>0$, define
        \[
            \Xi_t:=\Big\{G\in\mathcal{N}_n\ \Big|\ d_{f_\mathbb{Q}}(G,G^*_\mathbb{Q})\geq t\tau_n^{-1}\Big\}. 
        \]
        Then, for $t\geq 4c_2$ and $G\in\Xi_t$ we have
        \begin{align}
            \frac{1}{2}d_{f_\mathbb{Q}}(G,G^*_\mathbb{Q})-d_{f_\mathbb{Q}}(G_n,G^*_\mathbb{Q})\geq c_2\tau_n^{-1}.\label{equation2}
        \end{align}
        Recall that by definition $\hat{G}_n$ minimizes $R_n(\cdot)$. Therefore, in view of the calculations above, for $t\geq4c_2$
        \begin{align*}
            & \mathbb{P}\big(d_{f_\mathbb{Q}}(\hat{G}_n,G^*_\mathbb{Q})>t\tau_n^{-1}\big)\\ 
            &\leq \mathbb{P}\big(\exists G\in\Xi_t:\ R_n(G)-R_n(G_n)\leq 0\big)\\
            &=\mathbb{P}\Big(\exists G\in\Xi_t:\ R_n(G)-R_n(G^*_\mathbb{Q})-\big(R_n(G_n)-R_n(G^*_\mathbb{Q}\big)\leq 0\Big)\\
            & =\mathbb{P}\Bigg(\exists G\in\Xi_t:\ d_{f_\mathbb{Q}}(G,G^*_\mathbb{Q})+\frac{1}{n}\sum_{i=1}^nU_i(G)\\
            & \ \ \ \ \ \ \ \ \ \ \ \ \ \ \ \ \ \ \ \ \ \ \ \ -d_{f_\mathbb{Q}}(G_n,G^*_\mathbb{Q})-\frac{1}{n}\sum_{i=1}^nU_i(G_n)\leq 0\Bigg)
        \end{align*}
        holds. Using inequality (\ref{equation2}) in the third row yields  
        \begin{align*}
            & \mathbb{P}\Bigg(\exists G\in\Xi_t:\ d_{f_\mathbb{Q}}(G,G^*_\mathbb{Q})+\frac{1}{n}\sum_{i=1}^nU_i(G)\\
            & \ \ \ \ \ \ \ \ \ \ \ \ \ \ \ \ \ \ \ \ \ \ \ \ -d_{f_\mathbb{Q}}(G_n,G^*_\mathbb{Q})-\frac{1}{n}\sum_{i=1}^nU_i(G_n)\leq 0\Bigg)\\
            & =\mathbb{P}\Bigg(\exists G\in\Xi_t:\ \bigg(\frac{1}{2}d_{f_\mathbb{Q}}(G,G^*_\mathbb{Q})+\frac{1}{n}\sum_{i=1}^nU_i(G)\bigg)\\
            & \ \ \ \ \ \ \ \ \ \ \ \ \ \ \ \ \ \ \ \ \ \ \ \  +\bigg(\frac{1}{2}d_{f_\mathbb{Q}}(G,G^*_\mathbb{Q})-d_{f_\mathbb{Q}}(G_n,G^*_\mathbb{Q})-\frac{1}{n}\sum_{i=1}^nU_i(G_n)\bigg)\leq 0\Bigg)\\
            & \leq \mathbb{P}\Bigg(\exists G\in\Xi_t:\ \frac{1}{2}d_{f_\mathbb{Q}}(G,G^*_\mathbb{Q})+\frac{1}{n}\sum_{i=1}^nU_i(G)\leq 0\Bigg)\\
            & \ \ \ \ \ \ \ \ \ \ \ \ \ \ \ \ \ \ \ \ \ \ \ \ + \mathbb{P}\Bigg(c_2\tau_n^{-1}-\frac{1}{n}\sum_{i=1}^nU_i(G_n)\leq 0\Bigg)\\
            & \leq \mathbb{P}\Bigg(\exists G\in\Xi_t:\ \frac{1}{n}\sum_{i=1}^nU_i(G)\leq -\frac{1}{2}d_{f_\mathbb{Q}}(G,G^*_\mathbb{Q})\Bigg)\\
            & \ \ \ \ \ \ \ \ \ \ \ \ \ \ \ \ \ \ \ \ \ \ \ \ + \mathbb{P}\Bigg(c_2\tau_n^{-1}\leq \frac{1}{n}\sum_{i=1}^nU_i(G_n)\Bigg)
        \end{align*}
        and thus
        \begin{align*}
            & \mathbb{P}\big(d_{f_\mathbb{Q}}(\hat{G}_n,G^*_\mathbb{Q})>t\tau_n^{-1}\big)\\
            & \leq\mathbb{P}\Bigg(\exists G\in\Xi_t:\ \frac{1}{n}\sum_{i=1}^nU_i(G)\leq -\frac{1}{2}d_{f_\mathbb{Q}}(G,G^*_\mathbb{Q})\Bigg)\\
            & \ \ \ \ \ \ \ \ \ \ \ \ \ \ \ \ \ \ \ \ \ \ \ \ + \mathbb{P}\Bigg(c_2\tau_n^{-1}\leq \frac{1}{n}\sum_{i=1}^nU_i(G_n)\Bigg)
        \end{align*}
        It remains to find upper bounds for the two terms above. In order to bound the first term, 
        note that for $(x,y)\in\mathbb{R}^d\times\{0,1\}$ and any $G\in\mathcal{N}_n$ we have
        \begin{align*}
            |h_G(x,y)|
            & =\begin{cases}
                \big|1-1(x\in G)-\big(1-1(x\in G_\mathbb{Q}^*)\big)\big|, \ & \text{for}\ y=1,\\
                \big|1(x\in G)-1(x\in G_\mathbb{Q}^*)\big|,\ & \text{for}\ y=0
            \end{cases}\\
            & =\begin{cases}
                \big|1(x\in G^*_\mathbb{Q})-1(x\in G)\big|, \ & \text{for}\ y=1,\\
                \big|1(x\in G)-1(x\in G_\mathbb{Q}^*)\big|,\ & \text{for}\ y=0
            \end{cases}\\
            & =1\big(x\in G\Delta G^*_\mathbb{Q}\big).
        \end{align*}
        For all $i=1,\dots, n$ this implies $|U_i(G)|\leq 2$ and 
        \[
            \mathbb{E}\big[U_i(G)^2\big]\leq\mathbb{E}\big[h_G(X_i,Y_i)^2\big]= \mathbb{E}\big[1\big(x\in G\Delta G_\mathbb{Q}^*\big)\big]=d_\Delta(G,G^*_\mathbb{Q})\leq 1.
        \] 
        By Bernstein's inequality, for all $a>0$
        \[
            \mathbb{P}\Bigg(\bigg|\frac{1}{n}\sum_{i=1}^nU_i(G)\bigg|\geq a\Bigg)\leq 2\exp\bigg(-\frac{k_1na^2}{a+1}\bigg)
        \]
        holds, where $k_1>0$ is a constant. By setting $a=\frac{1}{2}d_{f_\mathbb{Q}}(G,G^*_\mathbb{Q})$, we have
        \[
             \mathbb{P}\Bigg(\bigg|\frac{1}{n}\sum_{i=1}^nU_i(G)\bigg|\geq \frac{1}{2}d_{f_\mathbb{Q}}(G,G^*_\mathbb{Q})\Bigg)\leq 2\exp\Big(-k_2nd_{f,g}(G,G^*_\mathbb{Q})^{2}\Big)
        \]
        for some constant $k_2>0$. Noting that by definition $\tau_n\leq n^{\frac{1}{\rho+2}}$, by (iii) we have 
        \begin{align*}
            & \mathbb{P}\Bigg(\exists G\in\Xi_t:\ \bigg|\frac{1}{n}\sum_{i=1}^nU_i(G)\bigg|\geq\frac{1}{2}d_{f_\mathbb{Q}}(G,G^*_\mathbb{Q})\Bigg)\\
            & \ \ \ \leq 2\exp\big(c_3n^{\frac{\rho}{\rho+2}}\big)\exp\big(-k_2nt^{2}\tau_n^{-2}\big)\\
            & \ \ \ \leq 2\exp\big(c_3n^{\frac{\rho}{\rho+2}}\big)\exp\big(-k_2t^{2}n^{-\frac{2}{\rho+2}+1}\big)\\
            & \ \ \ \leq 2\exp\Big(\big(c_3-k_2t^{2}\big)n^{\frac{\rho}{\rho+2}}\Big)\\
            & \ \ \ \leq 2\exp\Big(-c_3\tau_n^\rho\Big)
        \end{align*}
        for all $t\geq \sqrt{\frac{2c_3}{k_2}}$. To bound the second term we use Bernstein's inequality with $a=c_2\tau_n^{-1}$ and receive
        \begin{align*}
            \mathbb{P}\Bigg(c_2\tau_n^{-1}\leq \frac{1}{n}\sum_{i=1}^nU_i(G_n)\Bigg) & \leq \exp\bigg(-\frac{k_1nc_2^2\tau_n^{-2}}{c_2\tau_n^{-1}+1}\bigg)\\
            & \leq \exp\big(-k_3n\tau_n^{-2}\big)\\
            &\leq\exp\big(-k_3\tau_n^\rho\big)
        \end{align*}
        for a constant $k_3>0$. Therefore, for $t\geq\max\Big\{4c_2,\sqrt{\frac{2c_3}{k_2}}\Big\}$ we find an upper bound
        \begin{align*}
            & \mathbb{P}\big(d_{f_\mathbb{Q}}(\hat{G}_n,G^*_\mathbb{Q})>t\tau_n^{-1}\big)\\ 
            &\leq \mathbb{P}\Bigg(\exists G\in\Xi_t:\ \frac{1}{n}\sum_{i=1}^nU_i(G)\leq -\frac{1}{2}d_{f_\mathbb{Q}}(G,G^*_\mathbb{Q})\Bigg)\\
            & \ \ \ \ \ \ \ \ \ \ \ \ \ \ \ \ \ \ \ \ \ \ \ \ + \mathbb{P}\Bigg(c_2\tau_n^{-1}\leq \frac{1}{n}\sum_{i=1}^nU_i(G_n)\Bigg)\\
            &\leq 2\exp\Big(-c_3\tau_n^\rho\Big)+ \exp\big(-k_3\tau_n^\rho\big).
        \end{align*}
        Observing that $d_{f_\mathbb{Q}}(\hat{G}_n,G^*_\mathbb{Q})\leq 1$, we conclude
        \begin{align*}
            & \mathbb{E}\big[d_{f_\mathbb{Q}}^p(\hat{G}_n,G^*_\mathbb{Q})\big]\\
            & \leq \mathbb{E}\big[1\big(d_{f_\mathbb{Q}}(\hat{G}_n,G^*_\mathbb{Q})>t\tau_n^{-1}\big)\big]+t\tau_n^{-p}\mathbb{E}\big[1\big(d_{f_\mathbb{Q}}(\hat{G}_n,G^*_\mathbb{Q})\leq t\tau_n^{-1}\big)\big]\\
            & \leq 2\exp\Big(-c_3\tau_n^\rho\Big)+ \exp\big(-k_3\tau_n^\rho\big)+ t\tau_n^{-p}
        \end{align*}
        and thus
        \begin{align*}
            & \underset{n\rightarrow\infty}{\lim\ \sup}\ \underset{f\in\mathcal{F}}{\sup}\ \tau_n^{ p}\mathbb{E}\big[d_{f_\mathbb{Q}}^p(\hat{G}_n,G^*_\mathbb{Q})\big]\\
            & \leq \underset{n\rightarrow\infty}{\lim\ \sup}\ \tau_n^{ p}\Big(2\exp\Big(-c_3\tau_n^\rho\Big)+ \exp\big(-k_3\tau_n^\rho\big)+ t\tau_n^{- p}\Big)\\
            & <\infty.
        \end{align*}
    \end{proof}

\section{Convergence Rates for Neural Networks}\label{Appendix - Convergence Rates for Neural Networks}

    The first goal of this section is to prove Theorem \ref{Theorem Main}. We then follow this up by proving Lemma \ref{Lemma Theorem 5} and Lemma \ref{Lemma curse}.

\subsection{Proof of the Main Result}

    \noindent In order to simplify the approximation results below, we introduce a lemma considering the parallelization and concatenation of two networks $\Phi_1$ and $\Phi_2$. Since these results have been shown in many other articles e.g.  \cite{petersen2018optimal, schmidt2020nonparametric}, we omit the proof.   
    
    \begin{lemma}\label{lemma para+concat}
        Let $R(\Phi_1):\mathbb{R}^{d_1}\rightarrow\mathbb{R}^{d_3}$ and $R(\Phi_2):\mathbb{R}^{d_2}\rightarrow\mathbb{R}^{d_4}$ be realizations of neural networks with $L_1,L_2$ layers, sparsity $s_1,s_2$ and weights in $\mathcal{W}_{c_1},\mathcal{W}_{c_2}$, respectively.  
        \begin{itemize}
            \item If $d_4=d_1$, the concatenation of the functions $R(\Phi_1)\circ R(\Phi_2)$ can be realized by a neural network with $L=L_1+L_2+1$ layers, sparsity $s\leq 2s_1+2s_2$ and weights in $\mathcal{W}_{\max\{c_1,c_2\}}$.
            \item If $d_1=d_2$, the parallelization of the functions $P(R(\Phi_1),R(\Phi_2)):\mathbb{R}^{d_1}\rightarrow\mathbb{R}^{d_3+d_4}$ given by
            \[
                P(R(\Phi_1),R(\Phi_2))(x):=(R(\Phi_1)(x),R(\Phi_2)(x))
            \]
            can be realized by a neural network with $L=\max\{L_1,L_2\}$ layers, sparsity $s\leq s_1+s_2+2dL$ and weights in $\mathcal{W}_{\max\{c_1,c_2\}}$.
        \end{itemize}
    \end{lemma}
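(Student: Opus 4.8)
To prove Lemma~\ref{lemma para+concat}, the plan is to assemble both networks from a single elementary building block --- the \emph{ReLU identity network} --- and then to reduce everything to matrix bookkeeping. Since $z\mapsto\sigma(z)-\sigma(-z)$ equals $z$, the identity map $\mathrm{id}_{\mathbb{R}^m}$ is realized by a one-hidden-layer network with weight matrices $\binom{I_m}{-I_m}$ and $(I_m\mid -I_m)$, zero shift vector, $2m$ hidden neurons and at most $4m$ nonzero weights, all equal to $\pm 1$ and hence in $\mathcal{W}_c$ for every $c\geq 0$. Iterating this --- keeping the signal in the lifted, nonnegative $2m$-dimensional space, where $\sigma$ acts trivially, and reprojecting only at the end --- realizes $\mathrm{id}_{\mathbb{R}^m}$ with any prescribed number $k$ of hidden layers using at most $2m(k+1)$ nonzero weights, still with weights in $\mathcal{W}_0\subseteq\mathcal{W}_c$.

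For the concatenation I would assume $d_4=d_1$ and write $R(\Phi_\ell)=W^{(\ell)}_{L_\ell+1}\sigma_{L_\ell,b}\cdots W^{(\ell)}_1$ for $\ell=1,2$. The naive idea of fusing the two adjacent linear maps into the single matrix $W^{(1)}_1W^{(2)}_{L_2+1}$ fails, because a product of matrices with entries in $[-1,1]$ generally has entries of modulus larger than $1$ and so leaves $\mathcal{W}_c$. Instead I would write $R(\Phi_1)\circ R(\Phi_2)=R(\Phi_1)\circ\mathrm{id}_{\mathbb{R}^{d_1}}\circ R(\Phi_2)$ and splice in one ReLU identity layer: the two linear maps flanking the inserted $\sigma$ then become $\binom{W^{(2)}_{L_2+1}}{-W^{(2)}_{L_2+1}}$ and $(W^{(1)}_1\mid -W^{(1)}_1)$, which merely reuse the original entries up to sign and therefore still lie in $\mathcal{W}_{c_2}$ and $\mathcal{W}_{c_1}$. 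A layer count then gives $L_1+L_2+1$; the only change to the weight count is that the contributions of $W^{(2)}_{L_2+1}$ and $W^{(1)}_1$ are doubled, so the total sparsity is at most $2s_1+2s_2$, and all weights lie in $\mathcal{W}_{\max\{c_1,c_2\}}$.

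For the parallelization I would assume $d_1=d_2$ and, without loss of generality, $L_1\leq L_2$. First pad $\Phi_1$ up to depth $L_2$ by prepending $L_2-L_1$ ReLU identity layers on $\mathbb{R}^{d_1}$ (fusing, as above, the final reprojection with $W^{(1)}_1$ to avoid large weights); this changes neither the realized function nor membership in $\mathcal{W}_{c_1}$ and adds at most $2d_1(L_2-L_1)$ nonzero weights. Then run the padded $\Phi_1$ and $\Phi_2$ side by side: in layer $j$ take the block-diagonal weight matrix $\mathrm{diag}\bigl(W^{(1)}_j,W^{(2)}_j\bigr)$ and the concatenated shift vector. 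Because $\sigma$ acts coordinatewise the two blocks never interact, so the realization is exactly $x\mapsto\bigl(R(\Phi_1)(x),R(\Phi_2)(x)\bigr)$; this network has $\max\{L_1,L_2\}$ layers, weights in $\mathcal{W}_{\max\{c_1,c_2\}}$, and sparsity at most $s_1+s_2+2dL$ with $L=\max\{L_1,L_2\}$, the last term absorbing the padding overhead.

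I do not expect a genuine conceptual obstacle here; the one delicate point is the discretized and bounded weight set $\mathcal{W}_c$, which rules out the naive ``fuse adjacent matrices'' shortcut and is exactly what forces the extra layer in the concatenation and the block-diagonal (rather than fused) layout in the parallelization. The remaining work is the routine sparsity and layer accounting, which I would only sketch, since the identical constructions already appear in~\cite{petersen2018optimal, schmidt2020nonparametric}.
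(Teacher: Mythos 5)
The paper does not actually prove this lemma: it explicitly omits the proof and refers to \cite{petersen2018optimal, schmidt2020nonparametric}, so there is nothing to compare against line by line. Your construction is precisely the standard one from those references (ReLU identity via $z=\sigma(z)-\sigma(-z)$, an inserted identity layer to avoid fusing weight matrices, block-diagonal stacking for parallelization), and the concatenation part is fully correct: the layer count $L_1+L_2+1$, the sparsity bound $2s_1+2s_2$ coming from doubling exactly $W^{(1)}_1$ and $W^{(2)}_{L_2+1}$, and the preservation of $\mathcal{W}_{\max\{c_1,c_2\}}$ (which is closed under negation) all check out.

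The one place where your accounting does not land on the stated bound is the depth padding in the parallelization. Fusing the reprojection $(I\mid -I)$ with $W^{(1)}_1$ doubles the nonzero entries of $W^{(1)}_1$, so your construction yields sparsity at most $s_1+s_2+2d_1(L_2-L_1)+\|W^{(1)}_1\|_0$, i.e.\ up to $2s_1+s_2+2dL$ rather than the claimed $s_1+s_2+2dL$; the two bounds are incomparable in general since $\|W^{(1)}_1\|_0$ can be as large as $s_1$ and need not be dominated by the remaining slack in $2dL$. This is easy to repair in the setting the lemma is actually used in: the realizations in this paper live on $D=[0,1]^d$, so the prepended identity layers can be taken as plain $I_{d}$ with zero shifts ($\sigma$ is the identity on the nonnegative inputs), costing only $d(L_2-L_1)\le 2dL$ extra weights and requiring no reprojection or doubling. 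Alternatively one keeps your $\pm$-lifting and simply accepts the weaker constant, which is harmless since the lemma is only invoked up to unspecified multiplicative constants in $L_0,s_0,c_0$. Either way the substance of your argument is sound; only this bookkeeping step needs the adjustment.
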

    
    Note that we are only using weights $|w|\leq 1$. In order to approximate high numbers, we use the following lemma.

\begin{lemma}\label{Lemma high numbers}
        Let $c,M\in\mathbb{N}$. Then there exist neural networks $\Phi_1,\Phi_2$ with input dimensions $m_0=1$, at most $L=M+1$ layers, sparsity $s\leq 4M+1$ and weights in $\mathcal{W}_{c}$ such that
        \begin{align*}
            R(\Phi_1)(x) & = 2^Mx,\\
            R(\Phi_2)(x) & = 2^M
        \end{align*}
        for all $x\in[0,1]$.
    \end{lemma}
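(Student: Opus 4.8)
The plan is to exhibit explicit networks $\Phi_1,\Phi_2$ built from width-two hidden layers that repeatedly double a running value. The key observation is that on $[0,1]$ every quantity these networks produce is a nonnegative multiple of $x$ (or a nonnegative constant), so the ReLU activations act as the identity throughout; the only genuine constraint is therefore to realize the linear map ``multiply by $2^M$'' using matrix entries in $\mathcal{W}_c\subseteq[-1,1]$. I would double a value $v\ge 0$ by sending it with weight $1$ into two ReLU neurons and then summing their outputs with weights $1$; iterating this $M$ times produces the factor $2^M$ while keeping all weights in $\{0,\pm 1\}\subseteq\mathcal{W}_c$.

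For $\Phi_1$ I would set $m_0=1$, $m_1=\dots=m_M=2$, $m_{M+1}=1$ (so $L=M$ hidden layers), take $W_1=(1,1)^\top$ and $b_1=0$ to reach the state $(x,x)$, let each $W_i$ for $i=2,\dots,M$ be the $2\times 2$ all-ones matrix with $b_i=0$, and finish with $W_{M+1}=(1,1)$. A one-line induction gives that the state after layer $j$ is $(2^{j-1}x,2^{j-1}x)\ge 0$, so no ReLU ever clips, and the output equals $2^M x$. Counting nonzero weights yields $2+4(M-1)+2=4M\le 4M+1$. For $\Phi_2$ I would reuse this skeleton but kill the input dependence in the first layer, taking $W_1=(0,0)^\top$ and $b_1=(-1,-1)^\top$ so that the first ReLU layer outputs the constant $(\sigma(1),\sigma(1))=(1,1)$; the same doubling layers then yield the constant $2^M$, with the same layer count $L=M\le M+1$ and the same weight count $4M$, now also using the admissible weight $-1=-2^c\cdot 2^{-c}$.

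The verifications left are the layer-by-layer induction and the nonnegativity remark that makes ReLU a no-op on $[0,1]$; both are routine. The only point of substance --- and the reason the construction has depth linear in $M$ rather than depth $O(1)$ --- is the restriction $|w|\le 1$: a single weight cannot carry the number $2^M$, so each factor of two costs one additional layer together with two extra neurons, which is precisely what the bounds $L\le M+1$ and $s\le 4M+1$ are designed to absorb.
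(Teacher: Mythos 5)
Your construction is correct and essentially identical to the paper's: both realize $2^Mx$ by a width-two cascade of all-ones doubling layers with weights in $\{0,\pm 1\}\subseteq\mathcal{W}_c$, and the depth and sparsity counts come out the same. The only (immaterial) difference is in $\Phi_2$: you zero the input weight and use a bias in the first layer to seed the constant $1$, whereas the paper prepends an extra constant layer to $\Phi_1$; if anything your variant sidesteps a sign slip in the paper's choice $b_0=1$, which under the convention $\sigma_{i,b}(y)=\sigma(y-b)$ would yield $\sigma(-1)=0$ rather than $1$.
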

    
    \begin{proof}
        The network $\Phi_1$ is given by
        \[
            \Phi_1:=\big(W_1,b_1,\dots,W_{M+1},b_{M+1}\big)
        \]
        where $W_{M+1}=W_1^T= (1\ 1)$, 
        \begin{align*}
            W_i=\begin{pmatrix}
                1 & 1 \\
                1 & 1 \\
            \end{pmatrix}
        \end{align*}
        for $i=2,\dots, M$, $b_i=(0, 0)$ for $i\leq M$ and $b_{M+1}=0$. The other network is
        \[
            \Phi_2 := (W_0,b_0,\dots,W_{M+1},b_{M+1})=(W_0,b_0)\times \Phi_1
        \]
        where $W_0=0$ and $b_0=1$.\newline
        
        \noindent The layers of both networks are bounded by $M+1$. Sparsity of both networks can be bounded by
        \[
            s\leq 2*2+4(M-1)+1=4M+1.
        \]
    \end{proof}

    Next, we construct a neural network for each $G\in\mathcal{K}_{\mathbb{Q},\beta,B,\epsilon_1,\epsilon_2,r,d}^\mathcal{F}$ which approximates $G$ well with respect to the metric $d_{f_\mathbb{Q}}$. The rough idea for the construction of the network is similar to ideas used in  \cite{petersen2018optimal}. However, the precise construction in order to adapt to the metric in question differs substantially. The proof of the following theorem is one of the main contributions of this paper.  

    \begin{theo}\label{Theorem Approximation Result}
        Let $\beta\geq 0$, $B,\rho>0$ and $d\in\mathbb{N}$ with $d\geq 2$. Let $\mathcal{F}$ be a set of functions 
        \[
            \gamma:[0,1]^{d-1}\rightarrow\mathbb{R}
        \]
        such that the following holds. There exist $\epsilon_0,C_1,C_2>0$ and $C_3,C_4\in\mathbb{N}$ such that for any $\gamma\in \mathcal{F}$ and any $\epsilon\in(0,\epsilon_0)$ there is a neural network $\Phi$ with $L\leq L_0(\epsilon):=C_1 \lceil\log(\epsilon^{-1})\rceil$ layers, sparsity $s\leq s_0(\epsilon):=C_2 \epsilon^{-\rho}\log(\epsilon^{-1})$ and weights in $\mathcal{W}_c$ with $c=c_0(\epsilon):=C_3+C_4\lceil\log(\epsilon^{-1})\rceil$ such that
        \[
            \|R(\Phi)(x)-f\|_\infty\leq\epsilon.
        \]
        Define $\kappa=1+\beta$ and let $\mathfrak{Q}$ be a class of potential joint distributions $\mathbb{Q}$ of $(X,Y)$ such that the following conditions hold. 
        \begin{itemize}
            \item[(a)] There is a constant $M>1$ such that for all $\mathbb{Q}\in\mathfrak{Q}$ the marginal distribution of $\mathbb{Q}_X$ has a Lebesgue density bounded by $M$.
            \item[(b)] There are constants $r\in\mathbb{N}$ and $\epsilon_1,\epsilon_2>0$ such that for all $\mathbb{Q}\in\mathfrak{Q}$ the bayes rule satisfies $G_{\mathbb{Q}}^*\in \mathcal{K}_{\mathbb{Q},\beta,B,\epsilon_1,\epsilon_2,r,d}^\mathcal{F}$.
        \end{itemize}
        Let
        \[
            \tau_n:=\frac{n^{\frac{1}{2\kappa+\rho-1}}}{\log^{\frac{2}{\rho}}(n)}.
        \]
        Then there exist constants $C_1',C_2'>0$ and $C_3'\in\mathbb{N}$  such that the set
        \[
            \mathcal{N}_n=\mathcal{N}_{C_1'L_0(\tau_n^{-1}),C_2's_0(\tau_n^{-1}),C_3'c_0(\tau_n^{-1})}
        \]
        satisfies the following property. There is a constants $c_2>0$ and $N_0\in\mathbb{N}$ such that for all $n\geq N_0$ and $\mathbb{Q}\in\mathfrak{Q}$ there is a $G\in\mathcal{N}_n$ with
        \[
            d_{f_\mathbb{Q}}(G,G^*_\mathbb{Q})\leq c_2\tau_n^{-\kappa}.
        \]
    \end{theo}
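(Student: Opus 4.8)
The plan is to build, for every large $n$ and every $\mathbb{Q}\in\mathfrak{Q}$, a single neural network $\Phi$ of the size permitted in Definition~\ref{N_lsc} whose level set $R(\Phi)^{-1}(1)$ approximates $G^*_\mathbb{Q}$ in the metric $d_{f_\mathbb{Q}}$. Fix $\mathbb{Q}$ and decompose $G^*_\mathbb{Q}=H_1\cup\dots\cup H_u$ with $u\leq r$, boxes $D_\nu=\prod_{i=1}^d[a_i^\nu,b_i^\nu]$, pairs $(j_\nu,\iota_\nu)\in\mathcal{I}$ and functions $\gamma_\nu\in\mathcal{F}$ as in Definition~\ref{Definition Sets}. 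Set $\epsilon:=\tau_n^{-1}$; for $n$ large $\epsilon\in(0,\min\{\epsilon_0,\epsilon_1,\epsilon_2\})$, so the approximation hypothesis on $\mathcal{F}$ produces networks $\Phi_\nu$ with $\|R(\Phi_\nu)-\gamma_\nu\|_\infty\leq\epsilon$, at most $L_0(\epsilon)$ layers, sparsity at most $s_0(\epsilon)$ and weights in $\mathcal{W}_{c_0(\epsilon)}$. All constants introduced below will depend only on the fixed quantities $M,r,\epsilon_1,\epsilon_2,B,\beta,d$ and on $\epsilon_0,C_1,\dots,C_4$, hence the final bounds are uniform over $\mathfrak{Q}$.

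From $\Phi_\nu$ I would assemble a network $\Psi_\nu$ whose realization is a continuous, $[0,1]$-valued ``soft indicator'': it equals $1$ on $\{x:\iota_\nu x_{j_\nu}\leq R(\Phi_\nu)(x_{-j_\nu})-C\epsilon\}\cap\{x:\mathcal{B}_\epsilon(x)\subseteq D_\nu\}$, vanishes outside the analogous $\epsilon$-enlargement, and interpolates affinely in between. Three tools keep this within budget: Lemma~\ref{Lemma high numbers} realizes the maps $x\mapsto 2^Mx$ and hence a ramp of slope $\asymp\epsilon^{-1}$ despite all weights satisfying $|w|\leq 1$, at the cost of $O(\log(\epsilon^{-1}))$ extra layers; the identities $\min\{a,b\}=a-\sigma(a-b)$ and $\max\{a,b\}=a+\sigma(b-a)$ let ReLU networks clip to $[0,1]$ and form the conjunction of ``below the graph'' with the $2d$ affine inequalities defining $D_\nu$; and Lemma~\ref{lemma para+concat} concatenates and parallelizes the pieces, so depth, sparsity and weight grid stay $O(L_0(\epsilon))$, $O(s_0(\epsilon))$, $O(c_0(\epsilon))$. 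Setting $R(\Phi):=\max_{\nu\leq u}R(\Psi_\nu)$ via Lemma~\ref{lemma para+concat} yields $G:=R(\Phi)^{-1}(1)=\bigcup_{\nu\leq u}R(\Psi_\nu)^{-1}(1)$; since $u\leq r$ is a fixed constant and since replacing $\epsilon$ by a constant multiple of $\tau_n^{-1}$ alters $L_0,s_0,c_0$ only by constants, there are $C_1',C_2'>0$ and $C_3'\in\mathbb{N}$ with $G\in\mathcal{N}_n:=\mathcal{N}_{C_1'L_0(\tau_n^{-1}),\,C_2's_0(\tau_n^{-1}),\,C_3'c_0(\tau_n^{-1})}$.

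It then remains to estimate $d_{f_\mathbb{Q}}(G,G^*_\mathbb{Q})$. By construction $G\,\Delta\,G^*_\mathbb{Q}$ lies inside a union over $\nu$ of a strip of $x_{j_\nu}$-width $O(\epsilon)$ about the graph $\{\iota_\nu x_{j_\nu}=\gamma_\nu(x_{-j_\nu})\}$ inside $D_\nu$, together with a strip of width $O(\epsilon)$ about $\partial D_\nu$. Using (a), $d_{f_\mathbb{Q}}(G,G^*_\mathbb{Q})\leq M\int_{G\Delta G^*_\mathbb{Q}}|2f_\mathbb{Q}-1|\,\mathrm{d}\lambda$. When $\beta=0$ one has $\kappa=1$ and the trivial bounds $|2f_\mathbb{Q}-1|\leq 1$ and $\lambda(G\Delta G^*_\mathbb{Q})=O(\epsilon)$ already give $d_{f_\mathbb{Q}}(G,G^*_\mathbb{Q})=O(\epsilon)=O(\tau_n^{-\kappa})$. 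When $\beta>0$, a Taylor expansion at $0$ shows that every $g\in\mathcal{H}_{\beta,B}$ satisfies $|g(t)|\leq\frac{B}{m!}\,t^\beta$ for $t\in[0,1]$ (all derivatives up to order $m$ vanish at $0$ and $\partial^m g$ is $\omega$-Hölder with $\omega=\beta-m$), so condition~(3) of Definition~\ref{Definition Sets} yields $|2f_\mathbb{Q}(y)-1|\lesssim|x_{j_\nu}-y_{j_\nu}|^\beta$ throughout the $\nu$-th strip; integrating first in $x_{j_\nu}$ gives $\int_{\mathrm{strip}_\nu}|2f_\mathbb{Q}-1|\,\mathrm{d}\lambda\lesssim\int_0^{O(\epsilon)}t^\beta\,\mathrm{d}t\asymp\epsilon^{\beta+1}=\epsilon^\kappa$, and summing the $u\leq r$ contributions gives $d_{f_\mathbb{Q}}(G,G^*_\mathbb{Q})\leq c_2\epsilon^\kappa=c_2\tau_n^{-\kappa}$ for $n\geq N_0$, which is the assertion.

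The main obstacle I foresee is the second family of strips, those around the artificial box faces $\partial D_\nu$: on a face lying in the interior of $\bigcup_{\nu'}D_{\nu'}$ the function $f_\mathbb{Q}$ need not be near $\frac12$, and a careless soft indicator there would cost $\Theta(\epsilon)$ instead of $O(\epsilon^\kappa)$. The construction of $\Psi_\nu$ must therefore be arranged so that across any portion of $\partial D_\nu$ not contained in $\partial G^*_\mathbb{Q}$ the neighbouring soft indicators coincide and $G$ has no transition — exploiting $G^*_\mathbb{Q}\subseteq\bigcup_\nu D_\nu$, the pairwise-null intersections of the $D_\nu$, and the fact that on the remaining faces, which do belong to $\partial H$, condition~(3) still controls $f_\mathbb{Q}$ in the relevant $j_\nu$-direction. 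Carrying out this gluing while preserving the sparsity count $O(s_0(\epsilon))$ is where the bulk of the work sits; the Taylor/Fubini estimate above and the depth-and-sparsity bookkeeping via Lemmas~\ref{Lemma high numbers} and~\ref{lemma para+concat} are then routine.
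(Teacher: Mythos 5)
Your overall architecture matches the paper's: per-piece soft indicators built from the $\mathcal{F}$-approximation hypothesis, conjunction with a box indicator, combination over $\nu\leq r$, and a Taylor/Fubini estimate showing that the strip of width $O(\epsilon)$ around the graph $\{\iota_\nu x_{j_\nu}=\gamma_\nu(x_{-j_\nu})\}$ contributes only $O(\epsilon^{\beta+1})=O(\tau_n^{-\kappa})$ to $d_{f_\mathbb{Q}}$. That part is sound. But the step you yourself flag as the crux — the transition regions around the artificial faces $\partial D_\nu$, where $|2f_\mathbb{Q}-1|$ can be of order $1$ and a width-$\epsilon$ strip costs $\Theta(\tau_n^{-1})\gg\tau_n^{-\kappa}$ — is left unresolved, and the route you sketch (arranging that neighbouring soft indicators ``coincide'' across interior faces so that $G$ has no transition there) is not viable as stated. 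Adjacent pieces $H_{\nu_1},H_{\nu_2}$ may use different coordinates $j_{\nu_1}\neq j_{\nu_2}$ and in any case use independently approximated functions $\hat\gamma_{\nu_1},\hat\gamma_{\nu_2}$ that disagree by up to $2\epsilon$ near the shared face; Definition~\ref{Definition Sets} does not even require the true boundaries to glue continuously across faces. Whatever blending one performs in an $O(\epsilon)$-neighbourhood of such a face produces a symmetric-difference set of measure $\Theta(\epsilon)$ on which condition (3.) gives no control (it only constrains $f_\mathbb{Q}$ in the $j_\nu$-direction near $\partial H$, not near $\partial D_\nu$). So the proposal has a genuine gap exactly where you say the bulk of the work sits.

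The paper's device is different and does not involve gluing: it shrinks each $D_\nu$ to a box $\tilde D_\nu$ whose corners lie on a dyadic grid of spacing $h_{\epsilon^\kappa}\leq\epsilon^\kappa=\tau_n^{-\kappa}$, and gives the box indicators $\Phi_{\nu,i}$ ramps of width $h_{\epsilon^\kappa}/2$ (slope $\asymp\epsilon^{-\kappa}$, realizable via Lemma~\ref{Lemma high numbers} at a cost of only $O(\log\epsilon^{-1})$ extra depth and sparsity, and with thresholds exactly representable in $\mathcal{W}_c$ because they are dyadic). The entire sacrificial region $E=\bigcup_\nu E_\nu$ around the box faces then has Lebesgue measure at most $2rd\,\tau_n^{-\kappa}$, so the crude bound $|2f_\mathbb{Q}-1|\leq 1$ suffices there — no matching of adjacent indicators is needed, and the disjointness of the $\tilde D_\nu$ even lets the paper sum rather than maximize the $R(\Phi_\nu)$. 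A second small correction you also omit: the graph approximant must be pushed to the correct side ($\hat\gamma_\nu\geq\gamma_\nu$ when $\gamma_\nu\geq\tilde b^\nu_{j_\nu}$, and symmetrically at $\tilde a^\nu_{j_\nu}$, via the network $\Phi^2_{\gamma_\nu}$), so that columns where the true boundary exits $\tilde D_\nu$ contribute nothing rather than an uncontrolled $O(\epsilon)$-error against a possibly order-one $|2f_\mathbb{Q}-1|$. To repair your proof you should replace the gluing plan by this width-$\epsilon^\kappa$ grid construction (or supply an equally concrete alternative); without it the claimed bound $d_{f_\mathbb{Q}}(G,G^*_\mathbb{Q})\leq c_2\tau_n^{-\kappa}$ does not follow for $\beta>0$.
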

    
    \begin{proof}
        Set
        $\epsilon_0:= \min\{\epsilon_1,\frac{\epsilon_2}{4}\}$. Choose $N_0$ large enough such that $\tau_{N_0}\geq \epsilon_0^{-1}$. The proof is outlined as follows. We first construct a candidate set $G$ using neural networks. Then, we show that it satisfies the desired properties.\newline
        
        \noindent Let $n\geq N_0$, $\mathbb{Q}\in\mathfrak{Q}$ and
        \[
            G_\mathbb{Q}^*=H_1\cup\dots\cup H_u
        \]
        as in Definition \ref{Definition Sets} with $u\leq r$. We begin with the construction of the candidate set $G$. The idea is to define a network which approximates $G_\mathbb{Q}^*$ well on each set $H_\nu$ separately. Define $\iota_\nu,j_\nu,a_i^\nu,b_i^\nu,D_\nu,\gamma_\nu$ and $g_{\nu,x}$ as in Definition \ref{Definition Sets}. First, for each $\nu=1,\dots,u$ we consider a set $\tilde{D}_\nu$ with boarders lying on a grid. The advantage of using $\tilde{H}_\nu=\tilde{D}_\nu\cap H_\nu$ instead of $H_\nu$ is twofold. On the one hand, the grid and parameters of $\mathcal{N}_n$ are defined such that the boarders of $\tilde{D}_\nu$ can be constructed precisely. On the other, using the grid, two sets $\tilde{H}_{\nu_1}$, $\tilde{H}_{\nu_2}$ have a minimum distance for $\nu_1\neq\nu_2$, which is important for our method to work.  For $\delta>0$ let
        \[
            h_\delta=\max\Big\{h=2^{-c} \Big|\ h\leq\delta,\ c\in\mathbb{N}\Big\}.
        \]
        Set $\epsilon:=\tau_n^{-1}$. Define $I:=\big\{0,h_{\epsilon^\kappa},2h_{\epsilon^\kappa}\dots, 1-h_{\epsilon^\kappa}\big\}$ and let
        \begin{align*}
            \tilde{a}^\nu_j & := \min\{a\in I\ |\ a> a_j^\nu\},\\
            \tilde{b}^\nu_j & :=\min\{b\in I\ |\ b< b_j^\nu\},
        \end{align*} 
        for $\nu=1,\dots,u$, $j=1,\dots,d$. Now, set
        \[
            \tilde{D}_\nu:=\begin{cases}
                \prod_{j=1}^d \big[\tilde{a}^\nu_j,\tilde{b}^\nu_j\big],\ &\text{if}\ \forall j=1,\dots,d\  \tilde{a}_j^\nu<\tilde{b}_j,\\
                \emptyset,\ &\text{otherwise}.
            \end{cases}
        \]
        Note that $\tilde{b}^\nu_{j_\nu}-\tilde{a}^\nu_{j_\nu}\geq 2\epsilon$ for all $\nu=1,\dots,u$ by the choice of $\epsilon_0$. Figure \ref{fig:Menge_D} shows the collection of sets $\tilde{D}_\nu$ in the example considered in Figure \ref{fig:Menge}. 
        \begin{figure}
            \begin{center}
                \includegraphics[scale=0.3]{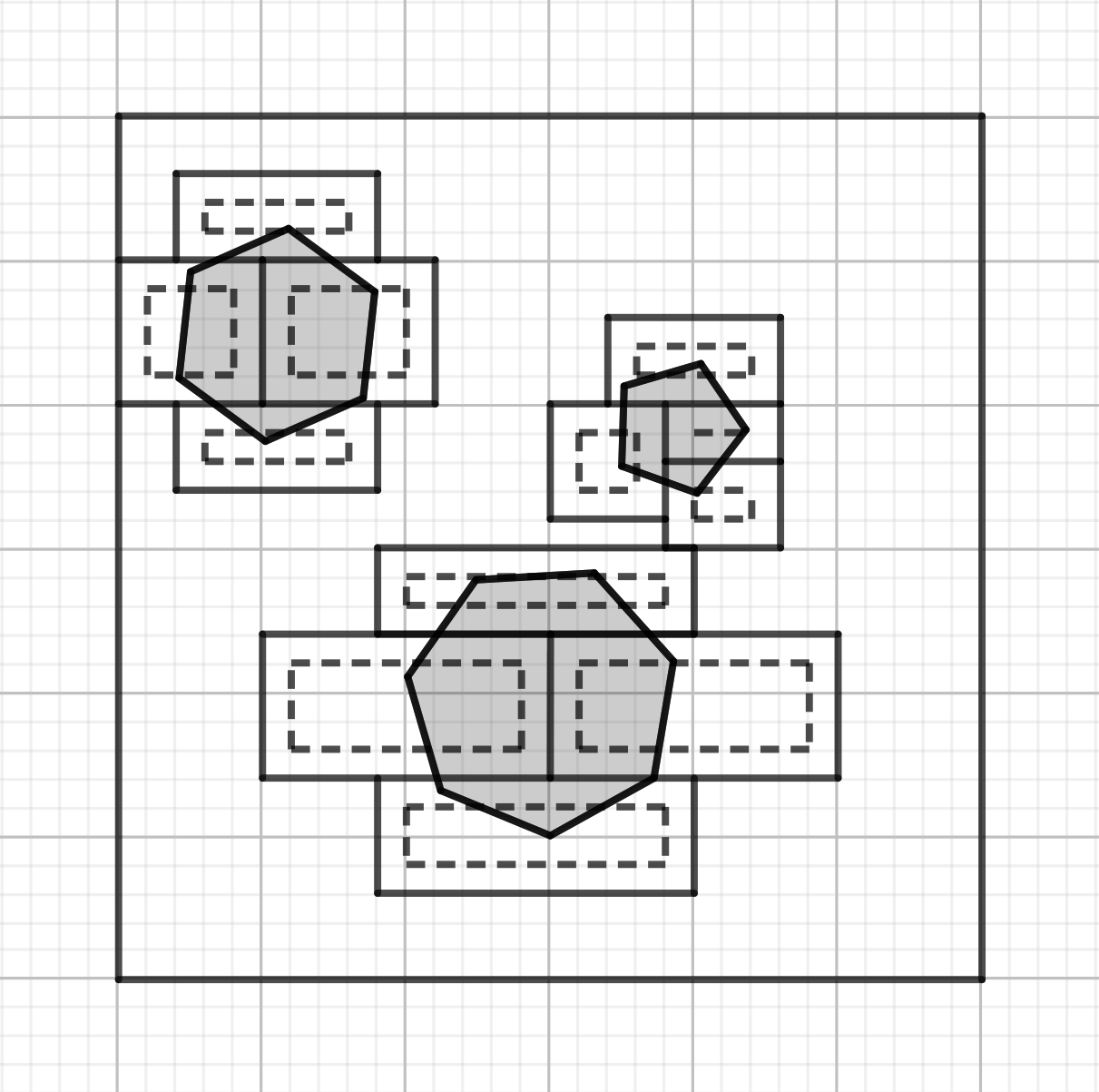}
            \end{center}
		    \caption{The collection of sets $\tilde{D}_\nu$ when considering the example from Figure \ref{fig:Menge}. The dotted lines are the boarders of the sets $D_1,\dots,D_{12}$. Note that $\delta$ is quite large in this example and observe, that the distance between two sets $\tilde{D}_{\nu_1},\tilde{D}_{\nu_2}$ is at least $2^{\delta+1}$.} \label{fig:Menge_D}
	    \end{figure}
	    Obviously we have $\tilde{D}_\nu\subseteq D_\nu$. Let
        \[
            \tilde{H}_\nu=\tilde{D}_\nu\cap\{x\in [0,1]^d\ |\ \iota_\nu x_{j_\nu}\leq \gamma_\nu(x_{-j_\nu})\}.
        \]
        The idea is to construct a neural network for every $\nu=1,\dots,u$ with $\tilde{D}_\nu\neq\emptyset$ which approximates $\tilde{H}_\nu$. We obtain the final neural network by parallelizing and adding up these networks. More specifically, we construct a network that approximates the product of $\mathbbm{1}(x \in \tilde{D}_\nu)$ and $\mathbbm{1}(\iota_\nu x_{j_\nu}\leq \gamma_\nu(x_{-j_\nu}))$. The latter is approximated by a network $\Phi_{\gamma_\nu}$ which is the concatenation of a network approximating the heaviside function $\mathbbm{1}(x_{j_\nu}>0)$ and a network approximating
        \[
            \tilde\gamma_\nu(x):=\big(x_1,\dots,x_{j_\nu-1},\iota_{\nu}x_{j_\nu}-\gamma_\nu(x_{-j_\nu}),x_{j_\nu+1},\dots,x_d\big).
        \]
        For $\nu=1,\dots,u$, from the prerequisites given in the Theorem we obtain a network $\Phi^1_{\gamma_\nu}$ with $L^1_{\gamma_\nu}\leq C_{1}^1\lceil\log\epsilon^{-1}\rceil$, $s^1_{\gamma_\nu}\leq C_{2}^1\epsilon^{-\rho}\lceil\log\epsilon^{-1}\rceil$ and weights in $\mathcal{W}_{c^1}$ with $c^1:=C_3^1+C_4^1\lceil\log\epsilon^{-1}\rceil$, such that
        \[
            \|R(\Phi^1_{\gamma_\nu})-\gamma_\nu\|_\infty\leq\frac{\epsilon}{4}. 
        \]
        For technical reasons, we need to slightly change the realisations in order to handle the behaviour of the approximations at the boarder of $\tilde{D}_\nu$. Define $\Phi^2_{\gamma_\nu}$ by its realization
        \begin{align*}
            R(\Phi^2_{\gamma_\nu})(x) & :=\frac{\tilde{b}_{j_\nu}^\nu +\tilde{a_{j_\nu}^\nu}}{2} +\sigma\left(R(\Phi^1_{\gamma_\nu})(x)+h_{\frac{\epsilon}{2}}-\frac{\tilde{b}_{j_\nu}^\nu+\tilde{a_{j_\nu}^\nu}}{2}\right)\\
            & \ \ \ \ \ \ \ \ \ \ \ \ \ \ \ \ \ -\sigma\left(\frac{\tilde{b}_{j_\nu}^\nu+\tilde{a_{j_\nu}^\nu}}{2}-R(\Phi^1_{\gamma_\nu})(x)+h_{\frac{\epsilon}{2}}\right)\\
            & = \begin{cases}
                R(\Phi^1_{\gamma_\nu})(x)+h_{\frac{\epsilon}{2}},\ & \text{if}\ R(\Phi^1_{\gamma_\nu})(x)\geq \frac{\tilde{b}_{j_\nu}^\nu +\tilde{a_{j_\nu}^\nu}}{2}+h_{\frac{\epsilon}{2}},\\
                R(\Phi^1_{\gamma_\nu})(x)-h_{\frac{\epsilon}{2}},\ & \text{if}\ R(\Phi^1_{\gamma_\nu})(x)\leq \frac{\tilde{b}_{j_\nu}^\nu +\tilde{a_{j_\nu}^\nu}}{2}-h_{\frac{\epsilon}{2}},\\
                2R(\Phi^1_{\gamma_\nu})(x)-\frac{\tilde{b}_{j_\nu}^\nu +\tilde{a_{j_\nu}^\nu}}{2},\ & \text{otherwise}.
            \end{cases}
        \end{align*}
        Let $\hat{\gamma}_\nu:=R(\Phi^2_{\gamma_\nu})$. Note that
        \begin{align*}
            \|\hat{\gamma}_\nu-\gamma_\nu\|_\infty & \leq\|R(\Phi^1_{\gamma_\nu})-\gamma_\nu\|_\infty+h_{\frac{\epsilon}{2}}\leq\frac{\epsilon}{4}+\frac{\epsilon}{2}\leq\epsilon
        \end{align*}
        as well as
        \begin{align*}
            \hat{\gamma}_\nu(x) & \geq \gamma_\nu(x)\ \text{for}\ x: \gamma_\nu(x)\geq \tilde{b}_{j_\nu}^\nu,\\
            \hat{\gamma}_\nu(x) & \leq \gamma_\nu(x)\ \text{for}\ x: \gamma_\nu(x)\leq \tilde{a}_{j_\nu}^\nu.
        \end{align*}
        Using parallelization and concatenation of Lemma \ref{lemma para+concat}, the function 
        \begin{align*}
            R(\Phi^3_{\gamma_\nu})(x):=\big(x_1,\dots,x_{j_\nu-1},\iota_{\nu}x_{j_\nu}-\hat{\gamma_\nu}(x_{-j_\nu}),x_{j_\nu+1},\dots,x_d\big)    
        \end{align*}
        is a realization of a neural network  with at most $L^3_{\gamma_\nu}\leq L^1_{\gamma_\nu}+3$ layers, sparsity 
        \[
            s^3_{\gamma_\nu}\leq 2s^1_{\gamma_\nu}+14+6d+2dL^1_{\gamma_\nu},
        \]
        and weights in $\mathcal{W}_{c^3}$ with $c^3=c^1+2$. Now let
        \[
            R(\Phi_H)(x):=\sigma(x_{j_\nu}+1)-\sigma(x_{j_\nu})=\begin{cases}
            0,\ & \text{for}\ x_{j_\nu}\leq -1,\\
            x_{j_\nu}+1,\ & \text{for}\ -1<x_{j_\nu}<0,\\
            1,\ & \text{for}\ x_{j_\nu}\geq 0.
            \end{cases}
        \]
        Note that $R(\Phi_H)(x)\in(0,1)$ if $x_{j_\nu}\in(-1,0)$. Define $\Phi_{\gamma_\nu}:=\Phi_H\circ\Phi^3_{\gamma_\nu}$ as in Lemma \ref{lemma para+concat} concatenation. The network $\Phi_{\gamma_\nu}$ has $L_{\gamma_\nu}\leq C_1^{\gamma_\nu}\lceil\log\epsilon^{-1}\rceil$ layers, sparsity $s_{\gamma_\nu}\leq C_2^{\gamma_\nu}\epsilon^{-\rho}\lceil\log(\epsilon^{-1})\rceil$ and weights in $\mathcal{W}_{c^{\gamma_\nu}}$ with $c^{\gamma_\nu}:=C_3^{\gamma_\nu}+C_4^{\gamma_\nu}\lceil\log(\epsilon^{-1})\rceil$ for some constants $C_1^{\gamma_\nu},C_2^{\gamma_\nu}>0$, $C_3^{\gamma_\nu},C_4^{\gamma_\nu}\in\mathbb{N}$. Then $R(\Phi_{\gamma_\nu})(x)=1$ if $\iota_{\nu}x_{j_\nu}\leq\hat{\gamma}_\nu(x_{-j_\nu})$ and $0\leq R(\Phi_{\gamma_\nu})(x)<1$ otherwise.
        Next, for $\nu=1,\dots,u$ with $\tilde{H}_\nu\neq\emptyset$ and $i\in\{1,\dots,d\}$, define the network $\Phi_{\nu,i}$ with realization 
        \begin{align*}
             R(\Phi_{\nu,i})(x) & :=2h_{\epsilon^{\kappa}}^{-1}\bigg(\sigma\left(x_i-\tilde{a}^\nu_i+\frac{h_{\epsilon^\kappa}}{2}\right)-\sigma\left(x_i-\tilde{a}_i^\nu\right)-\sigma\left(x_i-\tilde{b}_i^\nu\right)\\
             & \ \ \ \ \ \  +\sigma\left(x_i-\tilde{b}_i^\nu-\frac{h_{\epsilon^\kappa}}{2}\right)\bigg)\\
             & =\begin{cases}
            0,\ & \text{for}\ x_i\leq \tilde{a}^\nu_i-\frac{h_{\epsilon^\kappa}}{2},\\
            2h_{\epsilon^{\kappa}}^{-1}\left(x_i-\tilde{a}^\nu_i+\frac{h_{\epsilon^\kappa}}{2}\right),\ & \text{for}\ \tilde{a}^\nu_i-\frac{h_{\epsilon^\kappa}}{2}<x_i<\tilde{a}^\nu_i,\\
            1,\ & \text{for}\ \tilde{a}_i^\nu\leq x_i\leq \tilde{b}_i\nu,\\
            1-2h_{\epsilon^{\kappa}}^{-1}(x_i-\tilde{b}_i^\nu),\ & \text{for},\ \tilde{b}_i^\nu<x_i<\tilde{b}_i^\nu+\frac{h_{\epsilon^\kappa}}{2},\\
            0,\ & \text{for}\ x_i\geq \tilde{b}_i^\nu+\frac{h_{\epsilon^\kappa}}{2}.
            \end{cases}
        \end{align*}
        Note that $\Phi_{\nu,i}$ is a concatenation of two neural networks, since $2h_{\epsilon^{\kappa}}^{-1}\geq 1$. By Lemma \ref{Lemma high numbers}, we can realize the function $x\mapsto 2h_{\epsilon^{\kappa}}^{-1}x$ using a neural network $\Phi_{\epsilon}$ with $L_{\epsilon}\leq 1+\lceil\kappa\rceil c^{\gamma_\nu}$ layers, sparsity $s_{\epsilon}\leq 4\lceil\kappa\rceil c^{\gamma_\nu}+5$ and weights in $\mathcal{W}_{c^{\gamma_{\nu}}}$.  Thus, $\Phi_{\nu,i}$ has $L_{\nu_i}\leq 4+\lceil\kappa\rceil c^{\gamma_\nu}$ layers, sparsity $s_{\nu,i}\leq 32\lceil\kappa\rceil c^{\gamma_\lambda}+32$ and weights in $\mathcal{W}_{\lceil \kappa\rceil c^{\gamma_\nu}}$. 
        We then define 
        \[
            R(\Phi_\nu)(x):=\sigma\bigg(\sum_{i=1}^dR(\Phi_{\nu,i})(x)+R(\Phi_{\gamma_{\nu}})(x)-d\bigg).
        \]
        For $x\in[0,1]^d$ we have $R(\Phi_\nu)(x)=1$ if $x\in \tilde{D}_\nu$ and $\iota_\nu x_{j_\nu}\leq \hat{\gamma}_\nu(x_{-j_\nu})$. Otherwise $0\leq R(\Phi_\nu(x))<1$ holds. Note that by regarding the construction of $\tilde{D}$, we have $R(\Phi_{\nu_1})R(\Phi_{\nu_2})=0$ for $\nu_1\neq\nu_2$. In order to construct the sum, we used a parallelization of the networks $\Phi_{\nu,1},\dots,\Phi_{\nu,d}$ and $\Phi_{\gamma_\nu}$. Thus, the network $\Phi_\nu$ has $L_\nu\leq C_1^\nu\lceil\log\epsilon^{-1}\rceil$ layers, sparsity $s_\nu \leq C_2^\nu\epsilon^{-\rho}\lceil\log\epsilon^{-1}\rceil$
        and weights in $\mathcal{W}_{c^\nu}$ with $c^\nu=C_3^\nu+C_4^\nu\lceil\log\epsilon^{-1}\rceil$ for some constants  $C_1^\nu,C_2^\nu>0$, $C_3^\nu,C_4^\nu\in\mathbb{N}$\footnote{Note that the notation suggests that the constants differ depending on $\nu=1,\dots,u$. However, due to the construction, this is not the case. The superscripts in the notations above are given in order to describe where the constant comes from. For our analysis below, it does not make a difference if the constants change with $\nu$ or not.}. Note that Lemma \ref{Lemma high numbers} was used to construct $d\geq 1$. The realization of the final network is given by
        \[
            R(\Phi)(x) = \sum_{\nu : \tilde{D}_\nu\neq\emptyset} R(\Phi_{\nu})(x).
        \]
        Define $G:=R(\Phi)^{-1}(1)$. We now verify the desired properties.\newline 
        
        \noindent We begin by finding constants $C_1',C_2',C_3'>0$ such that 
        \[
            G\in \mathcal{N}_{C_1'L_0(\tau_n^{-1}),C_2's_0(\tau_n^{-1}),C_3'c_0(\tau_n^{-1})}.
        \]
        Clearly, this realization $R(\Phi)$ can be achieved with
        \begin{align*}
            L & \leq \max_{\nu:\tilde{D}_\nu\neq\emptyset}(C_1^\nu)\lceil\log\epsilon^{-1}\rceil+1=\max_{\nu:\tilde{D}_\nu\neq\emptyset}(C_1^\nu+1)\lceil\log\tau_n\rceil =:C_1'L_0(\tau_n^{-1})
        \end{align*}
        layers, sparsity
        \begin{align*}
            s & \leq 2u(C_2^\nu\epsilon^{-\rho}\lceil\log\epsilon^{-1}\rceil +Ld)\leq 2r(C_2^\nu\epsilon^{-\rho}\lceil\log\epsilon^{-1}\rceil +Ld)= C_2's_0(\tau_n^{-1})
        \end{align*}
        and weights in $\mathcal{W}_{C_3'c_0(\tau_n)}$ with
        \begin{align*}
            C_3'c_0(\tau_n^{-1})\geq C_3^\nu+C_4^\nu\lceil\log\epsilon^{-1}\rceil.
        \end{align*}
        with suitably chosen $C_1',C_2'>0$, $C_3'\in\mathbb{N}$. Note that the constants do not depend on $u$. \newline
        
        \noindent Next, we show that the set $G:=R(\Phi)^{-1}(1)$ satisfies the desired approximation property $d_{f_\mathbb{Q}}(G,G^*_\mathbb{Q})\leq\tau_n^{-\kappa}$. First, for $\nu=1,\dots,u$ define $E_\nu$ as follows. Let
        \begin{align*}
            E_\nu := \bigcup_{j=1}^d\left(\prod_{i=1}^{j-1}[0,1]\right)\times\left(\left[a_j^\nu,\tilde{a_j}^\nu\right]\cup\left[\tilde{b_j}^\nu,b_j^\nu\right]\right)\times\left(\prod_{i=j+1}^{d}[0,1]\right).
        \end{align*}
        It is easy to see that $\tilde{D}_\nu=\emptyset$ implies $D_\nu\subseteq E_\nu$. Set $E:=\bigcup_{\nu=1}^uE_\nu\cup\tilde{D}_\nu$. Figure \ref{fig:Menge_C} shows $E$ in the example considered in figures \ref{fig:Menge} and \ref{fig:Menge_C}.
        \begin{figure}
            \begin{center}
                \includegraphics[scale=0.3]{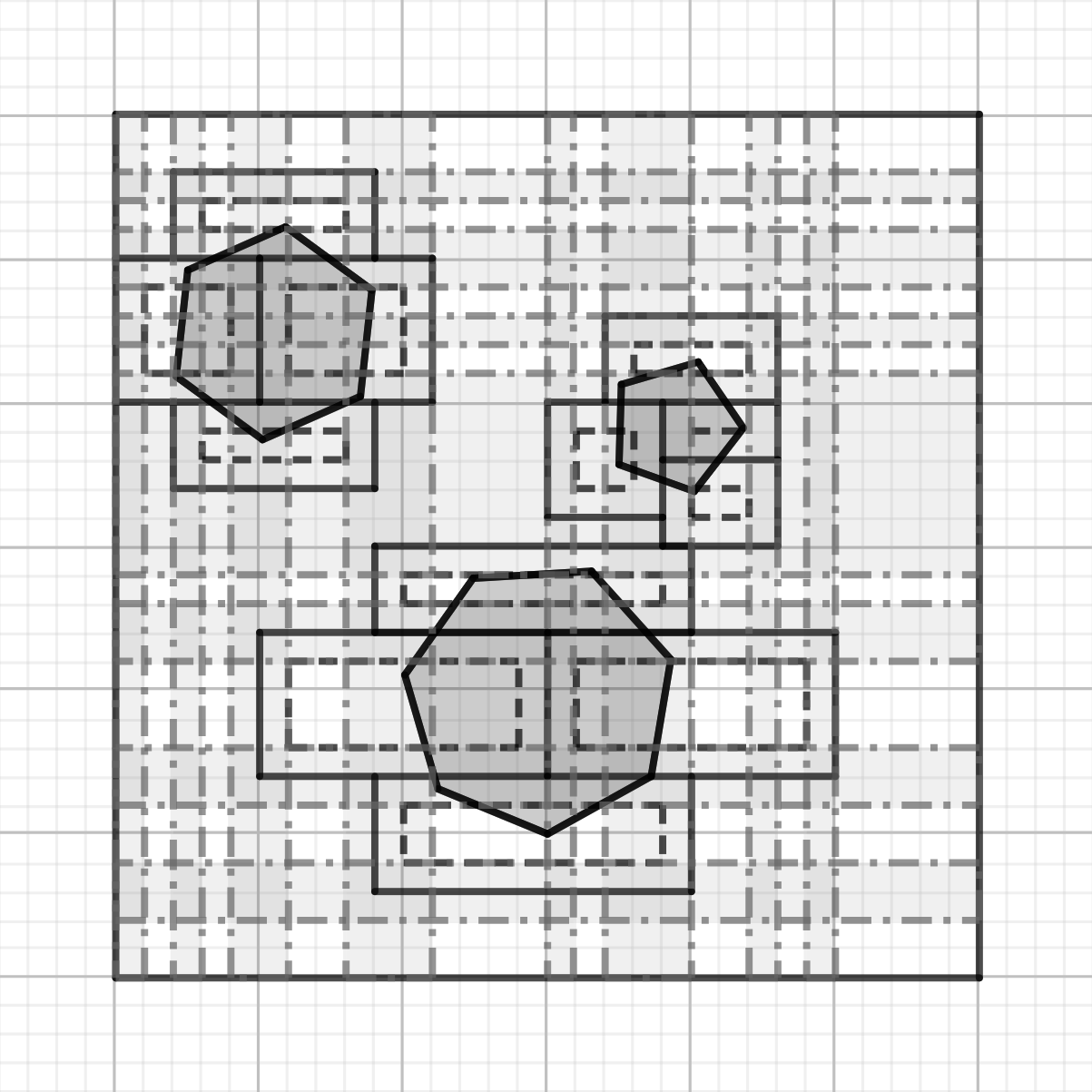}
            \end{center}
		    \caption{The set $E$ when considering the example from figures \ref{fig:Menge} and \ref{fig:Menge_D}. The light grey set represents $E$. Note that $E$ covers a majority of the space, since $\epsilon=\tau_n^{-1}$ is quite large in this example. Observe that $G_\mathbb{Q}^*,G\subseteq E$.} \label{fig:Menge_C}
	    \end{figure}
        Clearly $G_\mathbb{Q}^*,G\subseteq E$. Thus
        \begin{align*}
            d_{f_\mathbb{Q}}(G_\mathbb{Q},G) & = \int_{G_\mathbb{Q}^*\Delta G}|2f_\mathbb{Q}(x)-1|\mathbb{Q}_X(\mathrm{d}x)\\
            & \leq M\left(\sum_{\nu=1}^u\int_{E_\nu}1\mathrm{d}x+\sum_{\nu=1}^u\int_{(G_\mathbb{Q}^*\Delta G)\cap \tilde{D}_\nu}|2f_\mathbb{Q}(x)-1|\mathrm{d}x\right)\\
            & =:M\big( (I)+(II)\big).
        \end{align*}
        We need to bound both terms. For $(I)$ we observe that by construction $h_{\epsilon^\kappa}\leq 2\tau_n^{\kappa}$ we have
        \begin{align*}
            (I) & \leq \sum_{\nu=1}^{u}\sum_{j=1}^dh_{\epsilon^\kappa} \leq 2rd\tau_n^{\kappa}.
        \end{align*}
        The calculations for the second term are a bit more involved. First, observe that by construction of $G$, for all $\nu=1,\dots,u$ we have
        \begin{align*}
            & \int_{(G_\mathbb{Q}^*\Delta G)\cap \tilde{D}_\nu}|2f_\mathbb{Q}(x)-1|\mathrm{d}x\\
            & = \int_{\prod_{i\neq j_{\nu}}[\tilde{a}^\nu_i,\tilde{b}_i^\nu]}\int_{a_\nu(x_{-j_\nu})}^{b_\nu(x_{-j_\nu})}|2f_\mathbb{Q}(x)-1|\mathrm{d}x_{j_\nu}\mathrm{d}x_{-j_\nu}.
        \end{align*}
        where
        \begin{align*}
            b_\nu(x_{-j_\nu}) & :=\begin{cases}
                \tilde{a}^\nu_{j_\nu},\ & \text{if}\ \hat{\gamma}_\nu(x_{-j_\nu}),\gamma_\nu(x_{-j_\nu})<\tilde{a}^\nu_{j_\lambda} ,\\
                \tilde{b}_{j_\nu}^\nu,\ & \text{if}\ \tilde{b}^\nu_{j_\nu}<\hat{\gamma}_\nu(x_{-j_\nu}),\gamma_\nu(x_{-j_\nu}),\\
                \max\{\hat{\gamma}_\nu(x_{-j_\nu}),\gamma_\nu(x_{-j_\nu})\},\ & \text{otherwise} ,
            \end{cases}\\
            a_\nu(x_{-j_\nu}) & :=\begin{cases}
                \tilde{a}^\nu_{j_\nu},\ & \text{if}\ \hat{\gamma}_\nu(x_{-j_\nu}),\gamma_\nu(x_{-j_\nu})<\tilde{a}^\nu_{j_\lambda} ,\\
                \tilde{b}_{j_\nu}^\nu,\ & \text{if}\ \tilde{b}^\nu_{j_\nu}<\hat{\gamma}_\nu(x_{-j_\nu}),\gamma_\nu(x_{-j_\nu}),\\
                \min\{\hat{\gamma}_\nu(x_{-j_\nu}),\gamma_\nu(x_{-j_\nu})\},\ & \text{otherwise} .
            \end{cases}
        \end{align*}
        Let $x_{-j_\nu}\in\prod_{i\neq j_\nu}[\tilde{a}_i^\nu,\tilde{b}_i^\nu]$ be fixed. We have the following cases.
        \begin{itemize}
            \item Assume $\gamma_\nu(x_{-j_\nu})\geq \tilde{b}^\nu_{j_\nu}$. Then by construction we have 
            \[
                \hat{\gamma}_\nu(x_{-j_\nu})\geq \gamma_\nu(x_{-j_\nu})\geq \tilde{b}^\nu_{j_\nu}
            \] 
            and thus
            \begin{align*}
                \int_{a_\nu(x_{-j_\nu})}^{b_\nu(x_{-j_\nu})}|2f_\mathbb{Q}(x)-1|\mathrm{d}x_{j_\nu}=\int_{\tilde{b}^\nu_{j_\nu}}^{\tilde{b}^\nu_{j_\nu}}|2f_\mathbb{Q}(x)-1|\mathrm{d}x_{j_\nu}=0.
            \end{align*}
            \item Assume $\gamma_\nu(x_{-j_\nu})\leq \tilde{a}^\nu_{-j_\nu}$. Then by construction we have 
            \[
                \hat{\gamma}_\nu(x_{-j_\nu})\geq \gamma_\nu(x_{-j_\nu})\geq \tilde{a}^\nu_{j_\nu}
            \] 
            and thus
            \begin{align*}
                \int_{a_\nu(x_{-j_\nu})}^{b_\nu(x_{-j_\nu})}|2f_\mathbb{Q}(x)-1|\mathrm{d}x_{j_\nu}=\int_{\tilde{a}^\nu_{j_\nu}}^{\tilde{a}^\nu_{j_\nu}}|2f_\mathbb{Q}(x)-1|\mathrm{d}x_{j_\nu}=0.
            \end{align*}
            \item Assume $ \gamma_\nu(x_{-j_\nu})\in (\tilde{a}_{j_\nu}^\nu,\tilde{b}_{j_\nu}^\nu)$, by construction
            \[
            x^*_:=(x_1,\dots,x_{j_\nu-1},\gamma_\nu(x_{-j_\nu}),x_{j_\nu+1},\dots,x_d)\in\partial G_\mathbb{Q}^*.
            \]
            Consider $\gamma_\nu(x_{-j_\nu})\leq x_{j_\nu}\leq b_\nu(x_{-j_\nu})$. Let
            \[
                x=(x_1,\dots,x_{j_\nu-1},x_{j_\nu},x_{j_\nu+1},\dots,x_d).
            \]
            Now, for $\beta=0$ we have
            \begin{align*}
            \int_{a_\nu(x_{-j_\nu})}^{b_\nu(x_{-j_\nu})}|2f_\mathbb{Q}(x)-1|\mathrm{d}x_{j_\nu} & \leq\int_{\gamma_\nu(x_{-j_\nu})}^{\hat{\gamma}_\nu(x_{-j_\nu})}1\mathrm{d}x_{j_\nu}\\
            & = \hat{\gamma}_\nu(x_{-j_\nu})-\gamma_\nu(x_{-j_\nu})\\
            & \leq \tau_n^{-\kappa}.
            \end{align*}
            For $\beta>0$, by definition of $\mathcal{K}^\mathcal{F}_{\mathbb{Q},\epsilon_1,\epsilon_2,,r,d}$ we have 
            \[
                |2f_\mathbb{Q}(x)-1|\leq \left|g_{\nu,x^*}\big(x_{j_\nu}-\gamma_\nu(x_{-j_\nu})\big)\right| 
            \]
            Let $m:=\max\{k\in\mathbb{N}\ |\ k<\beta\}$ and $\omega:=\beta-m$. Using a Taylor expansion, there exists $y_{j_\nu}\in\big(0,x_{j_\nu}-\gamma_\nu(x_{-j_\nu})\big)$ such that
            \begin{align*}
                g_{\nu,x^*}\big(x_{j_\nu}-\gamma_\nu(x_{-j_\nu})\big) & = g_{\nu,x^*}\big(x_{j_\nu}-\gamma_\nu(x_{-j_\nu})\big)-g_{\nu,x^*}(0)\\
                & = \sum_{i=1}^{m-1}\frac{1}{i!}\partial_{j_\nu}^ig_{\nu,x^*}(0)\big(x_{j_\nu}-\gamma_\nu(x_{-j_\nu})\big)^i\\
                & \ \ \ \ \ \ +\frac{1}{m!}\partial_{j_\nu}^mg_{\nu,x^*}(y_{j_\nu})\big(x_{j_\nu}-\gamma_\nu(x_{-j_\nu})\big)^m\\
                & = \frac{1}{m!}\partial_{j_\nu}^mg_{\nu,x^*}(y_{j_\nu})\big(x_{j_\nu}-\gamma_\nu(x_{-j_\nu})\big)^m.
            \end{align*}
            Note that we used the definition of $\mathcal{H}_{\beta,B}$ in the last equality for all $i\leq m<\beta$. Thus
            \begin{align*}
                & |2f_\mathbb{Q}(x)-1|\\
                & \leq \left|g_{\nu,x^*}\big(x_{j_\nu}-\gamma_\nu(x_{-j_\nu})\big)\right|\\ 
                & \leq \frac{1}{m!}\frac{|\partial_{j_\nu}^mg_{\nu,x^*}(y_{j_\nu})-\partial_{j_\nu}^mg_{\nu,x^*}(0)|}{(y_{j_\nu}-0)^\omega}\big(x_{j_\nu}-\gamma_\nu(x_{-j_\nu})\big)^{\beta}\\
                & \leq \frac{B}{m!}\big(x_{j_\nu}-\gamma_\lambda(x_{-j_\nu})\big)^{\beta}.
            \end{align*}
            Similarly, for $a_\nu(x_{-j\nu})\leq x'_{j_\nu}\leq \gamma_\nu(x_{-j_\nu})$ we obtain
            \begin{align*}
                |2f_\mathbb{Q}(x)-1| \leq \frac{B}{m!}\big(x_{j_\nu}-\gamma_\lambda(x_{-j_\nu})\big)^{\beta}.
            \end{align*}
            This implies
            \begin{align*}
                & \int_{a_\nu(x_{-j_\nu})}^{b_\nu(x_{-j_\nu})}|2f_\mathbb{Q}(x)-1|\mathrm{d}x_{j_\nu}\mathrm{d}x_{-j_\nu}\\
                & \leq\int_{\gamma_\nu(x_{-j_\nu})}^{\hat{\gamma}_\nu(x_{-j_\nu})}\frac{B}{m!}\big(x_{j_\nu}-\gamma_\nu(x_{-j_\nu})\big)^{\beta}\mathrm{d}x_{j_\nu}\mathrm{d}x_{-j_\nu}\\
                & = \frac{B}{m!(\beta+1)}\big(\hat{\gamma}_\nu(x_{-j_\nu})-\gamma_\nu(x_{-j_\nu})\big)^{\beta+1}\\
                & \leq \frac{B}{m!(\beta+1)}\tau_n^{-\kappa}.
            \end{align*}
        \end{itemize}
        Therefore, we have
        \begin{align*}
                & \int_{a_\nu(x_{-j_\nu})}^{b_\nu(x_{-j_\nu})}|2f_\mathbb{Q}(x)-1|\mathrm{d}x_{j_\nu}\mathrm{d}x_{-j_\nu}\leq\max\left\{\frac{B}{m!(\beta+1)},1\right\}\tau_n^{-\kappa}.
            \end{align*}
        for all $\nu=1,\dots,u$ and $x_{-j_\nu}\in\prod_{i\neq j_\nu}[\tilde{a}_i^\nu,\tilde{b}_i^\nu]$ which yields
        \[
            (II)\leq \max\left\{\frac{B}{m!(\beta+1)},1\right\}\tau_n^{-\kappa}.
        \]
        Thus
        \[
            d_f(G,G_\mathbb{Q}^*)\leq \left(2rd+\max\left\{\frac{B}{m!(\beta+1)},1\right\}\right)\tau_n^{-\kappa}
        \]
        which concludes the proof.
    \end{proof}

    The remainder of the proof of our main result is now simple.

    \begin{proof}[Proof of Theorem \ref{Theorem Main}]
        We check the requirements in Proposition \ref{Theorem Mammen}.\newline
        
        \noindent Conditions (i) and (ii) are clear.\newline
        
        \noindent Condition (iii) follows from Theorem \ref{Theorem Approximation Result}.\newline
        
        \noindent Lastly, we need to prove (iv). Let $n\geq N_0$ where $N_0$ is defined in Theorem \ref{Theorem Approximation Result}. By Lemma \ref{Lemma Anzahl der Elemente} we have
        \begin{align*}
            |\mathcal{N}_n| & = \big|\mathcal{N}_{C_1'L_{0}(\tau_n),C_2's_{0}(\tau_n),C_3'c_0(\tau_n^{-1})}\big|\\
            & \leq \big((dC_2's_{n}(\tau_n)\\
            & \ \ \ \ \ +\min\{C_1'L_{n}(\tau_n),C_2's_0(\tau_n^{-1})\}(C_2's_{0}(\tau_n)+1)^2)2^{C_3'c_0(\tau_n^{-1})+2}\big)^{C_2's_{0}(\tau_n)}.
        \end{align*}
        Inserting all variables yields
        \begin{align*}
            |\mathcal{N}_n| 
            & \leq \big(k_1\tau_n^{k_2}\log^2(\tau_n)\big)^{k_3\tau_n^{\rho}\log(\tau_n)}
        \end{align*}
        for some constants $k_1,k_2,k_3>0$. Thus
        \begin{align*}
            \log\big(|\mathcal{N}_n|\big) & \leq k_4\tau_n^{\rho}\log^2(\tau_n)
        \end{align*}
        for some constant $k_4>0$. By setting 
        \[
            \tau_n:=\frac{n^{\frac{1}{2\kappa+\rho-1}}}{\log^{\frac{2}{\rho}}(n)}
        \]
        we obtain assumption (iv) 
        \[
            \log|\mathcal{N}_n| \leq c_3n^{\frac{\rho}{2\kappa+\rho-1}}. 
        \]
    \end{proof}
    
\subsection{Proofs for Regular Boundaries}
    
    Next, we prove Lemma \ref{Lemma Theorem 5}. We first provide the corresponding statement from  \cite{schmidt2020nonparametric}. Lemma \ref{Lemma Theorem 5} is a reformulated version.
    
    \begin{theo}(Theorem 5 in  \cite{schmidt2020nonparametric})\label{Theorem 5}\newline
        For any function $f\in\mathcal{F}_{\beta,B,d}$ and any integers $m\geq 1$ and $N\geq \max\big\{(\beta+1)^d,B+1\big\}$ there exists a neural network $\Phi$ with
        \[
            L = 8 + (m + 5)(1 + \lceil d\log_2 d\rceil)
        \]
        layers, sparcity 
        \[
            s\leq 94d^2(\beta + 1)^{2d}N(m + 6)(1 + \lceil \log_2 d\rceil)
        \]
        and weights $|w|\leq 1$ such that
        \[
            \|R(\Phi)-f\|_\infty\leq (2B+1)3^{d+1}N2^{-m}+B2^{\beta}N^{-\frac{\beta}{d}}.
        \]
    \end{theo}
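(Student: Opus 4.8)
The statement is Theorem~5 of \cite{schmidt2020nonparametric}; here is the construction I would carry out.

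\textbf{Overall scheme.} The plan is to realize a piecewise Taylor approximation of $f$ glued together by a ReLU partition of unity. Let $q$ denote the largest integer strictly smaller than $\beta$ (renamed from the $m$ of the H\"older notation of the Introduction to avoid clashing with the integer $m$ in the statement), and fix a resolution $M\asymp N^{1/d}$, whose admissibility uses $N\ge(\beta+1)^d$. Partition $[0,1]^d$ by the grid $x_\ell:=\ell/M$, $\ell\in\{0,\dots,M\}^d$, and let $T_\ell(x)=\sum_{|\alpha|\le q}a_{\ell,\alpha}(x-x_\ell)^\alpha$ be the order-$q$ Taylor polynomial of $f$ at $x_\ell$. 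Since $f\in\mathcal{F}_{\beta,B,d}$, a Taylor-with-remainder estimate gives $|f(x)-T_\ell(x)|\le B\,d^{\beta}M^{-\beta}$ for $x\in\mathcal{B}_{1/M}(x_\ell)$, and $|a_{\ell,\alpha}|\le B$. The target of the network is $x\mapsto\sum_\ell P_\ell(x)\,T_\ell(x)$, where $P_\ell(x)=\prod_{i=1}^d\big(1-M|x_i-(x_\ell)_i|\big)_+$ is the product hat-function partition of unity, so that $\sum_\ell P_\ell\equiv1$ and at most $2^d$ summands are nonzero at any given point.

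\textbf{Network realization.} First I would recall the standard ReLU multiplication network: for any $m\ge1$ there is a network with $O(m)$ layers and $O(m)$ weights in $[-1,1]$ approximating $(x,y)\mapsto xy$ on $[0,1]^2$ up to error $2^{-m}$; chaining such networks in a balanced binary tree gives, for fixed $r$, a network with $O(m\log_2 r)$ layers and $O(rm)$ weights approximating $(x_1,\dots,x_r)\mapsto\prod_i x_i$ on $[0,1]^r$ up to error $O(r2^{-m})$, the errors adding rather than multiplying because all factors lie in $[0,1]$. Each univariate hat $t\mapsto(1-M|t|)_+$ is \emph{exactly} a two-term ReLU combination, so $P_\ell$ is an $r=d$ product network, and each monomial $(x-x_\ell)^\alpha$ with $|\alpha|\le q$ is an $r\le q$ product network. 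Hence $P_\ell T_\ell=\sum_{|\alpha|\le q}a_{\ell,\alpha}\,P_\ell\,(x-x_\ell)^\alpha$ is a weighted sum of products of at most $d+q$ factors in $[0,1]$, where the coefficients $a_{\ell,\alpha}$ of size $\le B$ are injected in the final linear layer via the large-number trick of Lemma~\ref{Lemma high numbers} (this is where $N\ge B+1$ is used). Parallelizing over the $\binom{q+d}{d}$ multi-indices and over the $(M+1)^d$ grid points by Lemma~\ref{lemma para+concat}, and summing with one output layer, yields a single network $\Phi$ realizing $\sum_\ell P_\ell T_\ell$ up to the accumulated multiplication error.

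\textbf{Error split and counting.} Write $\|R(\Phi)-f\|_\infty\le\big\|\sum_\ell P_\ell(T_\ell-f)\big\|_\infty+\big\|\sum_\ell P_\ell T_\ell-R(\Phi)\big\|_\infty$. The first term is at most $2^d B d^{\beta}M^{-\beta}\le B2^{\beta}N^{-\beta/d}$ after fixing the constant in $M\asymp N^{1/d}$. The second term accumulates the $2^{-m}$ multiplication slack over the $(M+1)^d\binom{q+d}{d}$ product subnetworks together with the coefficient injections; propagating this through the products (each with $\le d+q$ factors in $[0,1]$) and the weighted sum (coefficients $\le B$) yields a bound of the form $(2B+1)3^{d+1}N2^{-m}$, with $M^d\asymp N$. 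For the architecture, the depth equals that of a single product-of-$d$-hat-functions network plus the hat and coefficient layers, totalling $8+(m+5)(1+\lceil d\log_2 d\rceil)$; the sparsity is (number of cells $(M+1)^d\asymp N$) $\times$ (monomials per cell $\binom{q+d}{d}\le(\beta+1)^d$) $\times$ ($O(m)$ weights per multiplication) $\times$ (the parallelization, balanced-tree and coordinate-selection overhead of Lemma~\ref{lemma para+concat}, contributing the remaining $(\beta+1)^d$, $d^2$ and $(1+\lceil\log_2 d\rceil)$ factors), all of which fits inside $94d^2(\beta+1)^{2d}N(m+6)(1+\lceil\log_2 d\rceil)$.

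\textbf{Main obstacle.} The scheme itself is routine; the effort is entirely in the bookkeeping. The delicate point is controlling how the $2^{-m}$ per-multiplication slack propagates through the nested products \emph{and} through the final weighted sum with coefficients as large as $B$, so that the output constant comes out exactly as $(2B+1)3^{d+1}$ rather than with a worse dependence on $B$ and $d$; this forces a careful choice of the order in which products are formed and of where the coefficients are absorbed. A second subtlety is synchronizing depths: the subnetworks for $P_\ell$ and for the various monomials have different natural depths and must be padded to a common depth before the summing layer, which is precisely what produces the factor $1+\lceil d\log_2 d\rceil$ in $L$ and part of the overhead in $s$. Boundary cells of $[0,1]^d$ need only the corresponding hat functions truncated and change nothing. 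Everything else is assembly via Lemmas~\ref{lemma para+concat} and~\ref{Lemma high numbers}.
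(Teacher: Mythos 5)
The paper does not actually prove this statement: its ``proof'' consists of the single line ``Theorem 5 in \cite{schmidt2020nonparametric}'', i.e.\ the result is imported verbatim as an external building block (and only later massaged into Lemma \ref{Lemma Theorem 5} by choosing $m,N$ as functions of $\epsilon$ and discretizing the weights). So there is no in-paper argument to compare against; the relevant comparison is with the proof in the cited reference, and your sketch follows exactly that construction: local Taylor polynomials on a grid of mesh $M\asymp N^{1/d}$, glued by the product-of-hat-functions partition of unity, realized through approximate ReLU multiplication networks arranged in a balanced tree, with parallelization/concatenation bookkeeping producing the depth and sparsity counts. Two small points of caution if you were to carry this out in full: the factor $2^d$ in your first error term is not needed and would in fact spoil the clean bound $B2^{\beta}N^{-\beta/d}$ --- since the $P_\ell$ are nonnegative and sum to one, $\bigl\|\sum_\ell P_\ell(T_\ell-f)\bigr\|_\infty$ is bounded by the worst single Taylor remainder on a cell, and one should use this convexity rather than counting the $2^d$ active cells; and weights such as the slope $M$ of the hat functions and the Taylor coefficients $a_{\ell,\alpha}$ exceed $1$, so they must be produced by repeated doubling or absorbed into rescaled factors (your appeal to the large-number device is the right idea, but it is where the constraints $N\geq B+1$ and the exact constants $(2B+1)3^{d+1}$, $94d^2(\beta+1)^{2d}$ get settled, and your sketch does not derive them). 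These are bookkeeping matters, not gaps in the strategy; the approach itself is the one used in the source the paper cites.
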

    
    \begin{proof}
    Theorem 5 in  \cite{schmidt2020nonparametric}.
    \end{proof}
    
    \begin{proof}[Proof of Lemma \ref{Lemma Theorem 5}]
        Let $N$ be the smallest integer satisfying \begin{align*}
            N\geq k_1\epsilon^{-\frac{d}{\beta}}\geq \max\big\{\big(B2^{\beta+2}\epsilon^{-1}\big)^{\frac{d}{\beta}},(\beta+1)^d, B+1\big\},
        \end{align*}
        where $k_1:=\max\big\{\big(B2^{\beta+2}\big)^{\frac{d}{\beta}},(\beta+1)^d, B+1\big\}$. Let $k_2$ be the smallest integer satisfying
        \[
            k_2\geq \log(k_1+1)+\Big(\frac{d}{\beta}+1\Big)+\log((2B+1)3^{d+1})+1
        \]
        and define $m:=k_2\lceil \log(\epsilon^{-1}\rceil\in\mathbb{N}$. Since $\epsilon<3^{-1}$ and thus $\log(\epsilon^{-1})\geq 1$ we have
        \begin{align*}
            m & \geq \bigg(\log(k_1+1)+\Big(\frac{d}{\beta}+2\Big)+\log((2B+1)3^{d+1})+1\bigg)\log(\epsilon^{-1})\\ 
            & \geq \log(k_1+1)+\Big(\frac{d}{\beta}+1\Big)\log(\epsilon^{-1})+\log((2B+1)3^{d+1})+2\\
            & \geq N+\log((2B+1)3^{d+1})+\log(\epsilon^{-1})+2.
        \end{align*}
        Theorem \ref{Theorem 5} implies the following. For any $f\in\mathcal{F}_{\beta,B,d}$ there exists a network $\tilde\Phi$ with
        \[
            L=8+(k_2\lceil\log \epsilon^{-1}\rceil + 5)(1+ \lceil \log_2 d\rceil)
        \]
        layers, sparsity
        \[
            s\leq 94d^2(\beta+1)^{2d}k_1\epsilon^{-\frac{d}{\beta}}(k_2\lceil\log \epsilon^{-1}\rceil + 6)(1 + \lceil \log_2 d\rceil )
        \]
        and weights $|w_i|\leq 1$ such that
        \[
            \|R(\tilde\Phi)-f\|_\infty\leq \frac{\epsilon}{2}.
        \]
        Note that 
        \begin{align*}
            L & \leq (8+(2k_2+5)(1+\lceil\log_2 d\rceil)\log\epsilon^{-1}=:c_1\log\epsilon^{-1},\\
            s & \leq \big(94d^2(\beta+1)^{2d}k_1(2k_2+6)(1+\lceil\log_2d\rceil\big)\epsilon^{-\frac{d}{\beta}}\log\epsilon^{-1}=:c_2\epsilon^{-\frac{d}{\beta}}\log\epsilon^{-1}.
        \end{align*}
        Let $V$ be defined as in the proof of Lemma \ref{Lemma Anzahl der Elemente}. Following the proof of Lemma 12 of  \cite{schmidt2020nonparametric}, we see that for any $g\leq \frac{\epsilon}{4(L+1)V}$ there is a neural network $\Phi$ with $L$ layers and sparsity $s$ such that
        \[
            \|R(\Phi)-R(\tilde\Phi)\|_\infty\leq\frac{\epsilon}{2}.
        \]
        where the nonzero weights of $\Phi$ are discretized with grid size $g$. Now, define
        \begin{align*}
            & \frac{\epsilon}{4(L+1)V}\\
            & =\frac{\epsilon}{4(L+1)\big(ds+L(s+1)^2\big)}\\
            & \geq \frac{\epsilon}{4(c_1\lceil\log \epsilon^{-1}\rceil+1)\big(dc_2\epsilon^{-\frac{d}{\beta}}\lceil\log\epsilon^{-1}\rceil+c_1\lceil\log \epsilon^{-1}\rceil(c_2\epsilon^{-\frac{d}{\beta}}\lceil\log\epsilon^{-1}\rceil+1)^2\big)}\\
            &\geq \frac{1}{4(c_1+1)(dc_2+c_1(c_2+1)^2}\epsilon^{2+2\frac{d}{\beta}}\\
            &\geq 2^{-\big(c_3+c_4\lceil\log(\epsilon^{-1})\rceil\big)}=:g.
        \end{align*}
        with
        \begin{align*}
            c_3 & := \lceil\log\big(4(c_1+1)(dc_2+c_1(c_2+1)^2\big)\rceil,\\
            c_4 & := \Big\lceil 2+2\frac{d}{\beta}\Big\rceil.
        \end{align*}
        Therefore, all weights are elements of $\mathcal{W}_c$ with $c=c_3+c_4\lceil\log\epsilon^{-1}\rceil$ and
        \[
            \|R(\Phi)-f\|_\infty\leq \|R(\Phi)-R(\tilde\Phi)\|_\infty+\|R(\tilde\Phi)-f\|_\infty\leq\frac{\epsilon}{2}+\frac{\epsilon}{2}=\epsilon.
        \]
    \end{proof}
    
    Lastly, we prove Lemma \ref{Lemma curse}. The extension to this case is similar to the extension in  \cite{schmidt2020nonparametric}.
    
    \begin{proof}[Proof of Lemma \ref{Lemma curse}]
        Let 
        \[
            \gamma=\gamma_r\circ\dots\circ\gamma_1\in\mathcal{G}_{r,t,\beta,B,d}
        \] 
        with $\gamma_i=(\gamma_{ij}\circ\iota_{ij})_{j=1}^{d_{i+1}}$. We first construct a candidate network $\Phi$ for $\gamma$. Then, we show that it approximates gamma well and satisfies the required properties.\newline 
        
        \noindent In order to construct a network that approximates $\gamma$ well, we first approximate $\gamma_{ij}$ and $\tau_{ij}$ using neural networks. The final network is constructed using concatenation and parallelization.\newline 
        Let $i=1,\dots,r$,  $j=1,\dots,d_i+1$ and $\epsilon_{i}>0$. Using Lemma \ref{Lemma Theorem 5} there exist constants $\epsilon^{i}_0,c^{i}_1,c^{i}_2>0, c^{i}_3,c^{i}_4\in\mathbb{N}$ such that there exists a neural network $\Phi_{ij}$ with $L^{ij}\leq c^{i}_1 \lceil\log(\epsilon_{i}^{-1})\rceil$ layers, sparsity $s^{ij}\leq c^{i}_2 \epsilon_{i}^{-\frac{t_i}{\beta_i}}\log(\epsilon_{i}^{-1})$ and weights in $\mathcal{W}_{c^i}$ with $c^{i}:=c^{i}_3+c^{i}_4\lceil\log(\epsilon_{i}^{-1})\rceil$ such that
        \[
            \|R(\Phi_{ij})(x)-\gamma_{ij}\|_\infty\leq\epsilon_{i}
        \]
        if $\epsilon_{i}<\epsilon_0^{i}$. Let $\hat{\gamma}_{ij}:=R(\Phi_{ij})(x)$. Additionally, the function $\iota_{ij}$ is the realization of a network with 0 Layers and sparsity $t_i$.\\
        Since concatenating and parallelizing networks using Lemma \ref{lemma para+concat} leads to linear transformations on the upper bounds on the Layers, sparsity and the constant $c'$, there exist constants $c_1',c_2'>0$, $c_3',c_4'\in\mathbb{N}$ such that the function 
        \[
            \hat{\gamma}=\hat{\gamma_r}\circ\dots\circ\hat{\gamma}_1
        \]
        with $\hat{\gamma}_i=(\hat{\gamma}_{ij}\circ\iota_{ij})_{j=1}^{d-1}$ is the realization of a network with
        \begin{align*}
            L & \leq c'_1 \max_{i=1,\dots,r}\lceil\log(\epsilon_{i}^{-1})\rceil\ \text{layers},\\
            s & \leq c'_2 \max_{i=1,\dots,r}\epsilon_{i}^{-\frac{t_i}{\beta_i}}\log(\epsilon_i^{-1})\ \text{sparsity},\\
            c' & :=c'_3+c'_4\max_{i=1,\dots,r}\lceil\log(\epsilon_{i}^{-1})\rceil
        \end{align*}  
        and weights in $\mathcal{W}_{c'}$.\newline
        
        \noindent Now, let $\epsilon>0$ be small enough. We show that $\hat{\gamma}$ approximates $\gamma$ well for suitabily chosen $\epsilon_i$. 
        Following Lemma 9 in  \cite{schmidt2020nonparametric} we have
        \begin{align*}
            \|\gamma-\hat{\gamma}\|_\infty & \leq C \sum_{i=1}^r\|\max_{j=1,\dots,d_{i+1}}|\gamma_{ij}-\hat{\gamma}_{ij}|\|_\infty^{\prod_{k=i+1}^{r}\min\{\beta_k,1\}}\\
            & \leq C\sum_{i=1}^r\epsilon_i^{\prod_{k=i+1}^{r}\min\{\beta_k,1\}}\\
            & \leq Cr\max_{i=1,\dots,r}\epsilon_i^{\prod_{k=1}^{i+1}\min\{\beta_k,1\}}
        \end{align*}
        for some constant $C>0$. Set
        \[
            \epsilon_i:=\left(\frac{\epsilon}{Cr}\right)^{\frac{1}{\prod_{k=1}^{i+1}\min\{\beta_k,1\}}}.
        \]
        First, this implies
        \[
            \|\gamma-\hat{\gamma}\|_\infty\leq \epsilon.
        \]
        Additionally, the network $\Phi$ has
        \begin{align*}
            L & \leq c'_1 \max_{i=1,\dots,r}\lceil\log(\epsilon_{i}^{-1})\rceil\leq c_1\lceil\log(\epsilon^{-1})\rceil\ \text{layers},\\
            s & \leq c'_2 \max_{i=1,\dots,r}\epsilon_{i}^{-\frac{t_i}{\beta_i}}\log(\epsilon_i^{-1})= c_2 \max_{i=1,\dots,r}\epsilon^{-\frac{t_i}{\beta_i\prod_{k=1}^{i+1}\min\{\beta_k,1\}}}\log(\epsilon^{-1})\\
            & \ \ \ \ \ \ \ \ \ \ \ \ \ \ \ \ \ \ \ \ \ \ \ \ \ \ \ \ \ \ \ \ = c_2 \epsilon^{-\rho}\log(\epsilon^{-1}) \ \text{sparsity},\\
            c' &  =c'_3+c'_4\max_{i=1,\dots,r}\lceil\log(\epsilon_{i}^{-1})\rceil\leq c_3+c_4\lceil\log(\epsilon^{-1})\rceil:=c
        \end{align*}  
        and weights in $\mathcal{W}_{c}$ for some constants $c_1,c_2>0$, $c_3,c_4\in\mathbb{N}$.
    \end{proof}
    
\section{Lower Bound}\label{Appendix - Lower Bound}
    
    We first prove Theorem \ref{Theorem Lower Bound}. The outline of the proof is similar to the proof of Theorem 3 in  \cite{mammen1999smooth}. However, the setting of Theorem \ref{Theorem Main} differs substantially from theirs. This leads to a new situation and new technical challenges to overcome in the proof of Theorem \ref{Theorem Lower Bound} .  
    
    \begin{proof}[Proof of Theorem \ref{Theorem Lower Bound}]
    
        By Hoelders inequality and condition (c) it is enough to consider the case $p=1$ and the first inequality. Let $\mathfrak{Q}_1\subseteq\mathfrak{Q}$ be a finite set of potential probability measures of $(X_1,Y_1),\dots,(X_n,Y_n)$. Then
        \begin{align*}
            \sup_{\mathbb{Q}\in\mathfrak{Q}}\ \mathbb{E}[d_\Delta(G_{n},G^*_\mathbb{Q})] & \geq \frac{1}{\#\mathfrak{Q}_1}\sum_{\mathbb{Q}\in\mathfrak{Q}_1}\mathbb{E}[d_\Delta(G_{n},G^*_\mathbb{Q})]
        \end{align*}
        Hence, it suffices to show that for any estimator $G_{n}$ we have
        \begin{align}
            \frac{1}{\#\mathfrak{Q}_1}\sum_{\mathbb{Q}\in\mathfrak{Q}_1}\mathbb{E}[d_\Delta(G_{n},G^*_\mathbb{Q})]\geq cn^{-\frac{1}{2\kappa-1+\rho}}\ \ \ \mathrm{a.s.},\label{Inequality Lower Bound 1}
        \end{align}
        for some constant $c>0$. We now define the set $\mathfrak{Q}_1$. Then, we prove $\mathfrak{Q}_1\subseteq\mathfrak{Q}$. Lastly, we show that $\mathfrak{Q}_1$ satisfies \eqref{Inequality Lower Bound 1}.\newline 
        
        \noindent Let $\phi:\mathbb{R}\rightarrow[0,1]$ be an infinitely many times differentiable function with the following properties:
        \begin{itemize}
            \item $\phi(t)=0$ for $|t|\geq 1$,
            \item $\phi(0)=1$.
        \end{itemize}
        Let $K\geq 2$ be an integer. For $i\in\{1,\dots,K\}^{d-1}$ define
        \begin{align*}
            \phi_i:[0,1]^{d-1}\rightarrow[0,1],\ \phi_i(y)=k_1 K^{-\beta_2}\prod_{j=1}^{d-1}\phi\left(K\left(y_j-\frac{2i_j-1}{K}\right)\right)
        \end{align*}
        for some $0<k_1$ small enough. Define
        \[
            W:=\prod_{i\in\{1,\dots,K\}^{d-1}}\{0,1\}.
        \]
        For $w\in W$ let
        \begin{align*}
            \gamma_w:[0,1]^{d-1}\rightarrow[0,1],\ \gamma_w(y)=\sum_{i\in\{1,\dots,K\}^{d-1}}w_i\phi_i(y).
        \end{align*}
        Now, for $w\in W$ define $\mathbb{Q}_w$ as follows. The marginal distribution $\mathbb{Q}_{w,X}$ is the uniform distribution on $[0,1]^d$ and 
        \begin{align*}
            f_{\mathbb{Q}_w}(x) & := \frac{1}{2}\left(1+k_2\left(\gamma_w(x_{-1})-x_{1}\right)^{\beta_1}\right)\mathbbm{1}\left(x_{1}\leq \gamma_w(x_{-1})\right)\\
            & \ \ \ \ \ \ \ \ \ \ +\frac{1}{2}\left(1-k_2x_{1}^{\beta_1}\right)\mathbbm{1}\left(0<x_{1}\leq \gamma_{1-w}(x_{-1})\right)\\
            & \ \ \ \ \ \ \ \ \ \ +\frac{1}{2}\left(1-k_{3}\left(x_{1}-\gamma_1(x_{-1})\right)^{\beta_1}\right)\mathbbm{1}(\gamma_1(x_{-1})<x_{1})
        \end{align*}
        for some $k_2,k_3>0$. Finally, let
        \[
            \mathfrak{Q}_1:=\big\{\mathbb{Q}_w\ \big|\ w\in W\big\}.
        \]
        We now show that $\mathfrak{Q}_1\subseteq\mathfrak{Q}$ by properly selecting the constants $c_1,k_1,k_2,k_3$ such that $f_{\mathbb{Q}_w}$ is well defined for all $w\in W$ and showing that $\mathfrak{Q}_1$ satisfies the conditions (a),(b),(c).\newline
        
        \noindent First of all, we choose $k_1,k_3$ small enough and (given $k_2>0$) $K_0$ large enough such that for all $K\geq K_0$ we have 
        \[
            \frac{1}{4}\leq f_{\mathbb{Q}_w}(x)\leq 1
        \]
        for all $x\in [0,1]^d$ and $w\in W$. 
        \begin{itemize} 
            \item[(a)] Clearly, for all $w\in W$ the marginal distribution of $\mathbb{Q}_w$ with respect to $X$ has a Lebesgue density which bounded by $1\leq M$. 
            \item[(b)] We need to show
            \begin{align*}
                G_{\mathbb{Q}_w}^* & =\Big\{x\in[0,1]^d\Big|\ f_{\mathbb{Q}_w}(x)\geq\frac{1}{2}\Big\}\in \mathcal{K}_{\mathbb{Q},\beta,B,\epsilon_1,\epsilon_2,r,d}^{\mathcal{F}_{\beta_2,B_2,d-1}}
            \end{align*}
            for all $w\in W$.
            \begin{enumerate}
                \item Clearly, by selecting $\nu=u=1$, $j=j_\nu=1$, $\iota_2=1$, $D_\nu=[0,1]^d$ and $\gamma=\gamma_w$ we have 
                \[
                    G_{\mathbb{Q}_w}^*=H_1=D_\nu\cap\big\{x\in[0,1]^d\ |\ \iota_2x_{1}\leq \gamma(x_{-1})\big\}.
                \]
                For $k_1$ small enough we also have $\gamma\in \mathcal{F}_{\beta_2,B_2,{d-1}}$ for all $w\in W$.
                \item clear.
                \item If $\beta_1>0$, for $w\in W$ and $x\in\partial G_{\mathbb{Q}_w}^*$ we have $x_{1}=\gamma_w(x_{-1})$. Let 
                \[
                    g_{\nu,x}:[0,1]\rightarrow\mathbb{R},\ g_{\nu,x}(y)=\max\{k_2,k_3\}y^{\beta_1}.
                \]
                Note that for $k_2,k_3$ small enough we have $g_{\nu,x}\in\mathcal{H}_{\beta_1,B_1}$. Additionally, we have 
                \begin{align*}
                    |2f_{\mathbb{Q}_w}(y)-1| & \leq g_{\nu,x}(y-x_{1}),\  \text{for}\ y\geq x_{1},\\
                    |2f_{\mathbb{Q}_w}(y)-1| & \leq g_{\nu,x}(x_{1}-y),\  \text{for}\ y\leq x_{1}.
                \end{align*}
                \item clear.
            \end{enumerate}
            This implies the assertion.
            \item[(c)] Let $w\in W$. For $\beta_1=0$ we have
            \begin{align*}
                d^{\kappa}_\Delta(G,G_{\mathbb{Q}_w}^*) & = d_\Delta(G,G_{\mathbb{Q}_w}^*)=\frac{1}{\min\{k_2,k_3\}}d_{f_{\mathbb{Q}_w}}(G,G_{\mathbb{Q}_w}^*)
            \end{align*}
            For $\beta_1>0$, there is an $\eta_0>0$ such that for all $0<\eta\leq\eta_0$ we have
            \begin{align*}
                & \lambda\Big(\big\{x\in [0,1]^d\ \big|\ |2f_{\mathbb{Q}_w}(x)-1|\leq\eta\big\}\Big)\\ 
                & \leq \lambda\bigg(\Big\{x \in [0,1]^d\ \Big|\ x_{1}\leq \gamma_w(x_{-1}),\ k_2(\gamma_w(x_{-1})-x_{1})^{\beta_1}\leq \eta\Big\}\\
                & \ \ \ \cup \Big\{x \in [0,1]^d\ \Big|\ x_{1}\leq \gamma_{1-w}(x_{-1}),\ k_2x_{1}^{\beta_1}\leq \eta\Big\}\\
                & \ \ \ \cup \Big\{x \in [0,1]^d\ \Big|\ \gamma_{1}(x_{-1}) \leq x_{1},\ k_3\Big(x_{1}-\gamma_{1}(x_{-1})\Big)^{\beta_1}\leq \eta\Big\}\bigg)\\
                & \leq \lambda\Bigg(\Bigg\{x \in [0,1]^d\ \Bigg|\ \gamma_2(x_{-1})-\frac{1}{k_2^{\frac{1}{\beta_1}}}\eta^{\frac{1}{\beta_1}}\leq x_{1}\leq \gamma_w(x_{-1})\Bigg\}\\
                & \ \ \ \cup \Bigg\{x \in [0,1]^d\ \Bigg|\ x_{1}\leq \frac{1}{k_2^{{\frac{1}{\beta_1}}}}\eta^{\frac{1}{\beta_1}}\Bigg\}\\
                & \ \ \ \cup \Bigg\{x \in [0,1]^d\ \Bigg|\ \gamma_{1}(x_{-1}) \leq x_{1},\ \gamma_{1}(x_{-1})+\frac{1}{k_3^{\frac{1}{\beta_1}}}\eta^{\frac{1}{\beta_1}}\Bigg\}\Bigg)\\
                & \leq \left(\frac{2}{k_2^{\frac{1}{\beta_1}}}+\frac{1}{k_3^{\frac{1}{\beta_1}}}\right)\eta^{\frac{1}{\beta_1}}.
            \end{align*}
            Following Proposition 1 in of \cite{tsybakov2004optimal} there exists $\tilde{c}_1,\tilde{\eta}_0>0$ such that
            \begin{align*}
                d_\Delta^\kappa(G,G_{\mathbb{Q}_w}^*)\leq \tilde{c}_1d_{f_{\mathbb{Q}_w}}(G,G_{\mathbb{Q}_w}^*)
            \end{align*}
            for all $G$ such that $d_\Delta(G,G_{\mathbb{Q}_w}^*)\leq \tilde{\eta}_0$. If $\tilde{\eta}_0\geq 1$ this implies the assertion with $c_1:=\tilde{c}_1$. If not, the assertion is implied by setting $c_1:=\frac{\tilde{c}_1}{\tilde{\eta}_0^\kappa}$.
        \end{itemize}
        Next we prove Inequality \eqref{Inequality Lower Bound 1}. For $w\in W$ write $\mathbb{Q}_w^n$ for the probability measure of the distribution of $(X_1,Y_1),\dots,(X_n,Y_n)$ when the underlying distribution is $\mathbb{Q}_w$. Define the product measure $\psi = \zeta \times \lambda$, where $\zeta$ is the counting measure on $\{0,1\}$. Note that  $\mathbb{Q}_w$ has a density with respect to $\psi$ which is given by
        \begin{align*}
            \mathrm{d}\mathbb{Q}_w=\frac{\mathrm{d}\mathbb{Q}_w}{\mathrm{d}\psi}(y,x):=\mathbbm{1}(y=1)f_w(x)+\mathbbm{1}(y=0)\cdot(1-f_w(x)).
        \end{align*}
        Assume $w_1,w_2\in W$ differ by only 1 entry. We obtain
        \[
            \int\min\{\mathrm{d}\mathbb{Q}^n_{w_{1}},\mathrm{d}\mathbb{Q}^n_{w_{2}}\}\mathrm{d}\psi=\int\min\{\mathrm{d}\mathbb{Q}^n_{0},\mathrm{d}\mathbb{Q}^n_{1}\}\mathrm{d}\psi,
        \]
        where for $s=0,1$ we write $\mathbb{Q}^n_{s}=\mathbb{Q}^n_{w^s}$ with
        \begin{align*}
            w^s_{i}:=\begin{cases}
                 s,\ & \text{for}\ i_1=\dots = i_{d-1}=1,\\
                 0,\ & \text{otherwise}
            \end{cases}
        \end{align*}
        for $i\in\{1,\dots,K\}^{d-1}$. Then, using Assouad's Lemma we get
        \begin{align*}
            & \frac{1}{\#\mathfrak{Q}_1}\sum_{\mathbb{Q}\in\mathfrak{Q}_1}\mathbb{E}[d_\Delta(G_{n},G^*_\mathbb{Q})]\\
            & \geq \frac{1}{2}K^{d-1}\lambda\bigg(\Big\{x\in [0,1]^d\ \Big|\ x_{1}\leq \phi_1(x_{-1})\Big\}\ \bigg)\int\min\{\mathrm{d}\mathbb{Q}^n_{0},\mathrm{d}\mathbb{Q}^n_{1}\}\mathrm{d}\psi\\
            & = \frac{1}{2}k_1K^{d-1-\beta_2}\int_{\mathbb{R}^{d-1}}\prod_{j=1}^{d-1}\phi(Kx_{j+1})\mathrm{d}x_{-1}\int\min\{\mathrm{d}\mathbb{Q}^n_{0},\mathrm{d}\mathbb{Q}^n_{1}\}\mathrm{d}\psi.
        \end{align*}
        We first bound the term $\int\min\{\mathrm{d}\mathbb{Q}^n_{0},\mathrm{d}\mathbb{Q}^n_{1}\}\mathrm{d}\psi$. By using the fact that
        \[
            \int \mathrm{d}\mathbb{Q}_{0}\mathrm{d}\psi=1
        \]
        and Hoelders inequality in the fourth row we calculate
        \begin{align*}
            & \int\min\{\mathrm{d}\mathbb{Q}^n_{0},\mathrm{d}\mathbb{Q}^n_{1}\}\mathrm{d}\psi\\
            & = 1-\frac{1}{2}\int |\mathrm{d}\mathbb{Q}^n_{0}-\mathrm{d}\mathbb{Q}^n_{1}|\mathrm{d}\psi\\
            &= 1-\frac{1}{2}\int \left|\sqrt{\mathrm{d}\mathbb{Q}^n_{0}}-\sqrt{\mathrm{d}\mathbb{Q}^n_{1}}\right|\cdot \left|\sqrt{\mathrm{d}\mathbb{Q}^n_{0}}+\sqrt{\mathrm{d}\mathbb{Q}^n_{1}}\right|\mathrm{d}\psi\\
            & \geq 1-\frac{1}{2}\left(\int \left(\sqrt{\mathrm{d}\mathbb{Q}^n_{0}}-\sqrt{\mathrm{d}\mathbb{Q}^n_{1}}\right)^2\mathrm{d}\psi\right)^\frac{1}{2}\left( \int\left(\sqrt{\mathrm{d}\mathbb{Q}^n_{0}}+\sqrt{\mathrm{d}\mathbb{Q}^n_{1}}\right)^2\mathrm{d}\psi\right)^{\frac{1}{2}}.
        \end{align*}
        By repeatedly using the fact that $\int \mathrm{d}\mathbb{Q}_{0}\mathrm{d}\psi=1$, this implies
        \begin{align*}
            & \int\min\{\mathrm{d}\mathbb{Q}^n_{0},\mathrm{d}\mathbb{Q}^n_{1}\}\mathrm{d}\psi\\
            & = 1-\frac{1}{2}\left(2\left(1-\int \sqrt{\mathrm{d}\mathbb{Q}^n_{0}\mathrm{d}\mathbb{Q}^n_{1}}\mathrm{d}\psi\right)\right)^\frac{1}{2}\left( 2\left(1+\int\sqrt{\mathrm{d}\mathbb{Q}^n_{0}\mathrm{d}\mathbb{Q}^n_{1}}\mathrm{d}\psi\right)\right)^{\frac{1}{2}}\\
            & = 1-\left(1-\left(\int \sqrt{\mathrm{d}\mathbb{Q}^n_{0}\mathrm{d}\mathbb{Q}^n_{1}}\mathrm{d}\psi\right)^2\right)^\frac{1}{2}\\
            & \geq 1-\left(1-\left(\int \sqrt{\mathrm{d}\mathbb{Q}^n_{0}\mathrm{d}\mathbb{Q}^n_{1}}\mathrm{d}\psi\right)^2+\frac{1}{4}\left(\int \sqrt{\mathrm{d}\mathbb{Q}^n_{0}\mathrm{d}\mathbb{Q}^n_{1}}\mathrm{d}\psi\right)^4\right)^\frac{1}{2}\\
            & = 1-\left(1-\frac{1}{2}\left(\int \sqrt{\mathrm{d}\mathbb{Q}^n_{0}\mathrm{d}\mathbb{Q}^n_{1}}\mathrm{d}\psi\right)^2\right)\\
            & = \frac{1}{2}\left(\int \sqrt{\mathrm{d}\mathbb{Q}^n_{0}\mathrm{d}\mathbb{Q}^n_{1}}\mathrm{d}\psi\right)^2.
        \end{align*}
        By independence we have
        \begin{align*}
            \int \sqrt{\mathrm{d}\mathbb{Q}^n_{0}\mathrm{d}\mathbb{Q}^n_{1}}\mathrm{d}\psi=\left(\int \sqrt{\mathrm{d}\mathbb{Q}_{0}\mathrm{d}\mathbb{Q}_{1}}\mathrm{d}\psi\right)^n.
        \end{align*}
        Additionally, observe that 
        \begin{align*}
            & \int \sqrt{\mathrm{d}\mathbb{Q}_{0}\mathrm{d}\mathbb{Q}_{1}}\mathrm{d}\psi\\ 
            & = \frac{1}{2}\int \mathrm{d}\mathbb{Q}_{0}\mathrm{d}\psi+\frac{1}{2}\int \mathrm{d}\mathbb{Q}_{1}\mathrm{d}\psi-\frac{1}{2}\int \left(\sqrt{\mathrm{d}\mathbb{Q}_{0}}-\sqrt{\mathrm{d}\mathbb{Q}_{1}}\right)^2\mathrm{d}\psi\\
            & = 1-\frac{1}{2}\int \left(\sqrt{\mathrm{d}\mathbb{Q}_{0}}-\sqrt{\mathrm{d}\mathbb{Q}_{1}}\right)^2\mathrm{d}\psi
        \end{align*}
        and
        \begin{align*}
            & \int \left(\sqrt{\mathrm{d}\mathbb{Q}_{0}}-\sqrt{\mathrm{d}\mathbb{Q}_{1}}\right)^2\mathrm{d}\psi\\
            & \leq \int \left(\sqrt{f_{w^0}(x)}-\sqrt{f_{w^1}(x)}\right)^2\mathrm{d}x+ \int \left(\sqrt{1-f_{w^0}(x)}-\sqrt{1-f_{w^1}(x)}\right)^2\mathrm{d}x\\
            & \leq 2\int \big(f_{w^0}(x)-f_{w^1}(x)\big)^2\mathrm{d}x+ 2\int \big(1-f_{w^0}(x)-\big(1-f_{w^1}(x)\big)\big)^2\mathrm{d}x\\
            & = 4\int \big(f_{w^0}(x)-f_{w^1}(x)\big)^2\mathrm{d}x
        \end{align*}
        where we used that $f_w(x)\geq\frac{1}{4}$ for all $x\in[0,1]^d$ and $w\in W$. Next, we calculate
        \begin{align*}
            & \int \big(f_{w^0}(x)-f_{w^1}(x)\big)^2\mathrm{d}x\\ 
            & \geq \frac{1}{4}\int_{[0,1]^{d-1}}\int_{0}^{\phi_j(x_{-1})}\Big(k_2\left(\phi_j(x_{-1})-x_{1}\right)^{\beta_1} + k_2x_{1}^{\beta_1}\Big)^2\mathrm{d}x_{1}\mathrm{d}x_{-1}\\
            & \geq \frac{k_2^2}{4}\int_{[0,1]^{d-1}}\int_{0}^{\phi_j(x_{-1})}\left(\phi_j(x_{-1})-x_{1}\right)^{2\beta_1}\mathrm{d}x_{1}\mathrm{d}x_{-1}\\
            & \ \ \ \ \ \  +\frac{k_2^2}{4}\int_{[0,1]^{d-1}}\int_{0}^{\phi_j(x_{-1})}x_{1}^{2\beta_1}\mathrm{d}x_{1}\mathrm{d}x_{-1}\\
            & = \frac{k_2^2}{4}(I_1+I_2).
        \end{align*}
        We need to control the terms $I_1$ and $I_2$. For the first we obtain
        \begin{align*}
            I_1 & := \int_{[0,1]^{d-1}}\int_{0}^{\phi_j(x_{-1})}\left(\phi_j(x_{-1})-x_{1}\right)^{2\beta_1}\mathrm{d}x_{1}\mathrm{d}x_{-1}\\
            & = \int_{[0,1]^{d-1}}\int_{0}^{\phi_j(x_{-1})}x_{1}^{2\beta_1}\mathrm{d}x_{1}\mathrm{d}x_{-1}\\
            & = \frac{1}{1+2\beta_1}\int_{[0,1]^{d-1}}\phi_j(x_{-1})^{1+2\beta_1}\mathrm{d}x_{-1}\\
            & \leq \frac{k_1^{1+2\beta_1}}{1+2\beta_1}K^{-\beta_2(1+2\beta_1)}\int_{\mathbb{R}^{d-1}}\prod_{j=1}^{d-1}\phi(Kx_{j+1})^{1+2\beta_1}\mathrm{d}x_{-1}\\
            & \leq 2\frac{k_1^{1+2\beta_1}}{1+2\beta_1}K^{-\beta_2(1+2\beta_1)-(d-1)}\\
            & = 2\frac{k_1^{1+2\beta_1}}{1+2\beta_1}K^{-\beta_2(2\kappa-1+\rho)}
        \end{align*}
        and similarly
        \begin{align*}
            I_2 & := \int_{[0,1]^{d-1}}\int_{0}^{\phi_j(x_{-1})}x_{1}^{2\beta_1}\mathrm{d}x_{1}\mathrm{d}x_{-1}\\
            & \leq 2\frac{k_1^{1+2\beta_1}}{1+2\beta_1}K^{-\beta_2(2\kappa-1+\rho)}.
        \end{align*}
        This implies
        \begin{align*}
            \int\min\{\mathrm{d}\mathbb{Q}^n_{0},\mathrm{d}\mathbb{Q}^n_{1}\}\mathrm{d}\psi\geq \frac{1}{2}\left(1-c^*K^{-\beta_2(2\kappa-1+\rho)}\right)^{2n}
        \end{align*}
        for some constant $c^*>0$. By setting $K:=n^{\frac{1}{\beta_2}\frac{1}{2\kappa-1+\rho}}$ we obtain
        \begin{align*}
            \int\min\{\mathrm{d}\mathbb{Q}^n_{0},\mathrm{d}\mathbb{Q}^n_{1}\}\mathrm{d}\psi\geq\frac{1}{2}\left(1-c^*\frac{1}{n}\right)^{2n}> c'
        \end{align*}
        for some constant $c'>0$ for $n$ large enough. Thus
        \begin{align*}
            & \frac{1}{\#\mathfrak{Q}_1}\sum_{\mathbb{Q}\in\mathfrak{Q}_1}\mathbb{E}[d_\Delta(G_{n},G^*_\mathbb{Q})]\\
            & = \frac{1}{2}k_1K^{d-1-\beta_2}\int_{\mathbb{R}^{d-1}}\prod_{j=1}^{d-1}\phi(Kx_{j+1})\mathrm{d}x_{-1}\int\min\{\mathrm{d}\mathbb{Q}^n_{0},\mathrm{d}\mathbb{Q}^n_{1}\}\mathrm{d}\psi\\
            & \geq \frac{1}{2}k_1K^{-\beta_2}\int_{\mathbb{R}^{d-1}}\prod_{j=1}^{d-1}\phi(x_{j+1})\mathrm{d}x_{-1}\cdot c'\\
            & \geq c n^{-\frac{1}{2\kappa-1+\rho}}
        \end{align*}
        for some constant $c>0$. This concludes the proof.
    \end{proof}
    
    Lastly, the proof of Theorem \ref{Theorem Lower Bound curse} is provided. The ideas used in the proof are very similar to those used in the proof of Theorem \ref{Theorem Lower Bound} above. We therefore only focus on the differences.
    
    \begin{proof}[Proof of Theorem \ref{Theorem Lower Bound curse}]
        As in the proof of Theorem \ref{Theorem Lower Bound} the strategy is to show that for any estimator $G_{n}$ we have
        \begin{align*}
            \frac{1}{\#\mathfrak{Q}_1}\sum_{\mathbb{Q}\in\mathfrak{Q}_1}\mathbb{E}[d_\Delta(G_{n},G^*_\mathbb{Q})]\geq cn^{\frac{1}{2\kappa-1+\rho}}\ \ \ \mathrm{a.s.},
        \end{align*}
        for some constant $c>0$ and some finite set $\mathfrak{Q}_1\subseteq\mathfrak{Q}$.
        Let $K\geq 2$ be an integer and let 
        \[
            i_{\mathrm{opt}}:=\underset{i=1,\dots,r_2}{\arg\ \max}\ \frac{t_i}{\beta_{2,i}^*}.
        \] 
        As in the proof of Theorem \ref{Theorem Lower Bound} define $\phi:\mathbb{R}\rightarrow[0,1]$ to be an infinitely many times differentiable function with the following two properties:
        \begin{itemize}
            \item $\phi(t)=0$ for $|t|\geq 1$,
            \item $\phi(0)=1$.
        \end{itemize}
        Note that $\phi^\alpha$ also fulfills both  properties for any $\alpha>0$, though it may not be infinitely many times differentiable. For $i\in\{1,\dots,K\}^{t_{i_{\mathrm{opt}}}}$ define
        \begin{align*}
            \phi_i:[0,1]^{d_1-1}\rightarrow[0,1],\ \phi_i(y)=k_1 K^{-\beta_{2,i_\mathrm{opt}}^*}\prod_{j=1}^{t_{i_{\mathrm{opt}}}}\phi^\alpha\left(K\left(y_j-\frac{2i_j-1}{K}\right)\right)
        \end{align*}
        for $\alpha:=\prod_{k=i_{\mathrm{opt}}}^{r_2}\min\{\beta_k,1\}$ and some $0<k_1$ small enough. Define
        \[
            W:=\prod_{i\in\{1,\dots,K\}^{d-1}}\{0,1\}.
        \]
        For $w\in W$ let
        \begin{align*}
            \gamma_w:[0,1]^{d_1-1}\rightarrow[0,1],\ \gamma_w(y)=\sum_{i\in\{1,\dots,K\}^{t_{i_{\mathrm{opt}}}}}w_i\phi_i(y).
        \end{align*}
        Now, for $w\in W$ we define $\mathbb{Q}_w$ as before. The marginal distribution $\mathbb{Q}_X$ is the uniform distribution on $[0,1]^d$ and 
        \begin{align*}
            f_{\mathbb{Q}_w}(x) & := \frac{1}{2}\left(1+k_2\left(\gamma_w(x_{-1})-x_{1}\right)^{\beta_1}\right)\mathbbm{1}\left(x_{1}\leq \gamma_w(x_{-1})\right)\\
            & \ \ \ \ \ \ \ \ \ \ +\frac{1}{2}\left(1-k_2x_{1}^{\beta_1}\right)\mathbbm{1}\left(0<x_{1}\leq \gamma_{1-w}(x_{-1})\right)\\
            & \ \ \ \ \ \ \ \ \ \ +\frac{1}{2}\left(1-k_{3}\left(x_{1}-\gamma_1(x_{-1})\right)^{\beta_1}\right)\mathbbm{1}(\gamma_1(x_{-1})<x_{1})
        \end{align*}
        for some $k_2,k_3>0$. Note that for $k_1>0$ small enough $\gamma:=\gamma_w\in\mathcal{G}_{r_2,t,\beta_2,B_2,d'}$ by defining
        \begin{align*}
            \gamma_i(y) & =(y_1,\dots,y_{t_i},0,\dots,0),\ \text{for}\ i< i_{\mathrm{opt}},\\
            \gamma_i(y) & =(\psi(y),0,\dots,0),\ \text{for}\ i = i_{\mathrm{opt}},\\
            \gamma_i(y) & = \left(k_1^{\alpha_i}y_1^{\min\{\beta_i,1\}},0,\dots,0\right),\ \text{for}\ i>i_{\mathrm{opt}},
        \end{align*}
        where
        \begin{align*}
            \psi(y) & :=\sum_{i\in\{1,\dots,K\}^{t_{i_{\mathrm{opt}}}}}w_ik_1^{\alpha_{i_\mathrm{opt}}} K^{-\beta_{2,i_\mathrm{opt}}}\prod_{j=1}^{t_{i_{\mathrm{opt}}}}\phi\left(K\left(y_j-\frac{2i_j-1}{K}\right)\right),\\
            \alpha_i & := \frac{1}{(r_2-i_{\mathrm{opt}}+1)\prod_{k=i+1}^{r_2}\min\{\beta_k,1\}}
        \end{align*}
        for $i_{\mathrm{opt}}\leq i\leq r_2$. The rest of the proof is analogous to the proof of Theorem \ref{Theorem Lower Bound}.
    \end{proof}

\noindent\textbf{Acknowledgments}\newline

\noindent First and foremost, I would like to thank Enno Mammen for supporting me with some helpful comments and inspiring insights during the creation of this paper. Additionally, many thanks goes to Munir Hiabu for assisting with comments during the final stages of the working process. 

%%%%%%%%%%%%%%%%%%%%%%%%%%%%%%%%%%%%%%%%%%%%%%
%% Supplementary Material, if any, should   %%
%% be provided in {supplement} environment  %%
%% with title and short description.        %%
%%%%%%%%%%%%%%%%%%%%%%%%%%%%%%%%%%%%%%%%%%%%%%

\bibliographystyle{plain}

\bibliography{References}

\end{document}